\DeclareMathOperator{\SL}{SL}
\DeclareMathOperator{\Pic}{Pic}
\DeclareMathOperator{\Hecke}{\mathscr{T}}
\DeclareMathOperator{\M}{M}
\DeclareMathOperator{\GL}{GL}
\DeclareMathOperator{\Tr}{Tr}
\DeclareMathOperator{\Cond}{Cond}
\DeclareMathOperator{\Disc}{Disc}
\DeclareMathOperator{\Gal}{Gal}
\DeclareMathOperator{\Jac}{Jac}
\DeclareMathOperator{\Nm}{Nm}
\DeclareMathOperator{\End}{End}
\DeclareMathOperator{\sign}{sign}
\DeclareMathOperator{\adj}{Adj}
\DeclareMathOperator{\Id}{Id}
\def\id#1{{\mathfrak{#1}}}      % an ideal
\def \QQ{\mathbb{Q}}
\def \ZZ{\mathbb{Z}}
\def \FF{\mathbb{F}}
\def\Om{\mathscr{O}}
\def\A{\mathscr{A}}
\def\C{\mathscr{C}}
\def \CC{\mathbb{C}}
\def\<#1>{{\left\langle{#1}\right\rangle}}
\def \val{v}
\theoremstyle{plain}
\newtheorem{thm}{Theorem}[section]
\newtheorem{prop}[thm]{Proposition}
\newtheorem{coro}[thm]{Corollary}
\newtheorem{lemma}[thm]{Lemma}
\theoremstyle{remark}
\newtheorem{rem}[thm]{Remark}
\theoremstyle{definition}
\newtheorem{obs/res}[thm]{Observation}
\newtheorem{definition}[thm]{Definition}
\newtheorem{conj}[thm]{Conjecture}
\begin{document}
\title{Heegner points on Cartan non-split Curves}

\author{Daniel Kohen}
\address{IMAS-CONICET, Buenos Aires, Argentina}
\email{kohendaniel@gmail.com}
\thanks{DK was partially supported by a CONICET doctoral fellowship}

\author{Ariel Pacetti}
\address{Departamento de Matem\'atica, Facultad de Ciencias Exactas y Naturales, Universidad de Buenos Aires and IMAS, CONICET, Argentina}
\email{apacetti@dm.uba.ar}
\thanks{AP was partially supported by CONICET PIP 2010-2012 GI and FonCyT BID-PICT 2010-0681.}
\keywords{Cartan Curves, Heegner points}
\subjclass[2010]{Primary: 11G05, Secondary: 11F30}
\begin{abstract} Let $E/\QQ$ be an elliptic curve of conductor $N$, and
  let $K$ be an imaginary quadratic field such that the root number of
  $E/K$ is $-1$. Let $\Om$ be an order in $K$ and assume that there
  exists an odd prime $p$, such that $p^2 \mid\mid N$, and $p$ is inert in
  $\Om$. Although there are no Heegner points on $X_0(N)$
  attached to $\Om$, in this article we construct such points on
  Cartan non-split curves. In order to do that we
  give a method to compute Fourier expansions for forms on Cartan
  non-split curves, and prove that the constructed points form a
  Heegner system as in the classical case.
\end{abstract}
			
\maketitle

%\tableofcontents				
\section*{Introduction}
Let $E$ be an elliptic curve over $\QQ$ of conductor $N$. A difficult
unsolved problem is to construct a set of generators for the rational
points on $E$. Nowadays, ``Heegner points'' construction is the only
general method known.  Let $K$ be an imaginary quadratic field
such that $E/K$ has root number $-1$.  Let $\Om$ be an order in $K$ of
discriminant prime to $N$ satisfying the \emph{Heegner hypothesis for
  $X_{0}(N)$}, that is,  all primes dividing $N$ are
  split in $\Om$ (see \cite{Dar04} Hypothesis 3.9). Then, one can
construct points on the modular curve $X_0(N)$ and map them through
the modular parametrization to get points on $E$.  Gross-Zagier
Theorem says that the constructed points are non-torsion if and only
if $L'(E,1) \neq 0$.

Heegner's construction can be generalized to any square-free $N$ using
Shimura curves provided the \emph{Heegner hypothesis for Shimura
  curves} is satisfied: the number of prime numbers dividing $N$ which
are inert in $\Om$ is even. Although the hypothesis might look
awkward, when $N$ is square-free it is the right one for the root
number of $E/K$ to be $-1$. When $N$ is not square-free, this is not
true anymore. For example, suppose $E$ is an elliptic curve over $\QQ$
of conductor $p^2$ ($p$ an odd prime) and $K$ is an imaginary
quadratic field with discriminant $D$ such that $D$ and $p$ are
relatively prime and $p$ is inert in $K$. In this case, the root
number is still $-1$ (see for example \cite[Definition 1.1.3]{ZH}),
but the Heegner hypothesis is not satisfied. Nevertheless, there
should exist some Heegner point construction (and Heegner systems) and
a Gross-Zagier-Zhang formula should hold.  Since there are no Heegner
points on the classical modular curve $X_0(p^2)$ associated to $\Om$
we need to consider other modular curves. A canonical choice in this
case is to consider the so-called Cartan non-split curve, which is a
quotient of the Poincar\'e upper half-plane by a Cartan non-split
group. Since such group is a subgroup of a matrix algebra, once we
proved that our curve $E$ is a quotient of the Cartan non-split
curve Jacobian, the modular parametrization can be explicitly computed using
the Fourier expansion of modular forms for it.

Some new problems appear while working with such groups, for example
what is the right normalization of a modular form? (there is not an
easy formula to relate all Hecke operators eigenvalues with Fourier
coefficients of eigenforms for such groups). Some interesting problems
that will not be addressed in this article (and are unknown in
general) are determining the strong Weil curve for the Cartan
non-split curve (even deciding when it coincides with the strong Weil
curve for $\Gamma_0(N)$), and determining the Manin constant for it.

In this article we show how to compute Hecke operators for Cartan
non-split curves (and curves which are mixed situations of classical
curves for some primes and Cartan non-split for the other ones) and how to
compute the Fourier expansion of Cartan modular forms. We propose a
natural normalization (well defined up to $\pm 1$) and show how to
construct Heegner points (and Heegner systems) on modular curves over
imaginary quadratic fields satisfying the Cartan-Heegner hypothesis
using the presented theory.

The present article is organized as follows: we start with the case
$N=p^2$ where all new ideas appear while avoiding to deal with subindices in a
first reading. In the first section we recall the basic definitions of
Cartan non-split curves, and give a moduli space interpretation for
them. Our moduli problem is different from the classical one and also
from the one presented in \cite{Rebolledo}, but it makes the geometric
and analytic properties of Hecke operators and Heegner systems more
clear. For example, with this moduli interpretation it is easy to
define Hecke operators (outside $p$), and show that this definition
agrees with the double coset definition (as in \cite{Chen}). It is
also easily generalizable to the mixed situations.

Next we focus on the problem of computing Fourier expansions of Cartan
modular forms. We propose a suitable normalization and prove that with
this normalization, the Fourier expansion of a Cartan modular form has
coefficients in $\QQ(\xi_p)$ (the $p$-th cyclotomic field). The way to
compute the Fourier expansion is to write the form as a linear
combination of other modular forms (twists of the weight $2$ modular form
attached to $E$), and then
solve a linear system to compute the combination explicitly. A
Theorem of Chen and Edixhoven (\cite{Chen,Edi}) proves that our curve
is isogenous to a quotient of the Jacobian of the Cartan non-split
curve, so the Eichler-Shimura construction and the Abel-Jacobi map
give the modular parametrization. A difference with the $\Gamma_0(N)$
case is that the cusps for the Cartan non-split curve are not defined
over $\QQ$ (implying the natural modular parametrization is not rational),
so we average over all conjugate cusps to get a rational map (which we
also call modular parametrization). Galois conjugation
sends a Cartan modular form to another Cartan modular form but
for another Cartan subgroup, i.e. it corresponds to another choice of
a non-square modulo $p$, so in the modular parametrization all Cartan
non-split groups are involved.

After the theory for level $p^2$  is done, we move to the general
case of mixed types, i.e. elliptic curves whose conductor are not
square-free, some primes dividing the conductor are split in $\Om$
while other ones are inert. Although no extra difficulties appear,
we believe that considering first the conductor $p^2$ case gives a
better understanding of the new ideas involved.

The third and fourth sections are about constructing Heegner points
and Heegner systems satisfying the usual compatibility
relations. Using this, we can prove a big part of the Birch and
Swinnerton-Dyer conjecture for $E/K$ by applying the usual
Darmon-Kolyvagin and Gross-Zagier-Zhang formula machinery
 (Theorem~\ref{thm:Kolyvagin} and Theorem~\ref{thm:GZZ} ). We also
include some computational details on how our construction can be
carried out for any particular curve $E$ and any order $\Om$ satisfying the
Cartan-Heegner hypothesis.

The last section of this article contains many examples where we show
how the method works for different elliptic curves, including the
Manin constants and Heegner points obtained by our method for each of
them.

\medskip

\noindent{\bf Acknowledgments:} We would like to thank Professor Henri
Darmon for many suggestions and for the discussions the first author
had with him while visiting McGill University. We also
thank Professor Imin Chen and Professor Bas Edixhoven for explaining some technicalities of the
definition of Hecke operators on Cartan non-split groups and Professor
Tim Dokchitser for the results in Appendix~\ref{app:1}. We also thank
the referee and Juan Restrepo for useful comments and corrections.

\section{Cartan non-split curves of prime level}
\noindent{\bf Notations and conventions:} throughout this article, $p$
will denote an odd prime and $\varepsilon$ will be a non-square modulo
$p$.  Given a matrix $A \in \M_{2\times 2}(\ZZ)$,
$\bar{A}$ will denote its reduction modulo $p$.
\subsection{Definition} 
The Cartan non-split ring modulo $p$ is the ring
 \[ C^{\varepsilon}_{ns}(p) = \left\{ \left( \begin{array}{ccc}
a & b \\
c & d  \\ \end{array} \right) \ \in \\M_{2 \times 2}(\mathbb{F}_{p}) : a \equiv d , c \equiv  b\varepsilon \bmod{p}   \right\}.
\]
The group of invertible elements $({C^{\varepsilon }_{ns}}(p))^{\times}$ is isomorphic to the cyclic group
$\mathbb{F}^{\times}_{p^2}={\mathbb{F}_{p}(\sqrt{\varepsilon})}^{\times}$.
 We also define the ring
$M^{\varepsilon}_{ns}(p)=\left\{A \in \M_{2\times 2}(\ZZ) \; : \; \bar{A}  \in  C^{\varepsilon}_{ns}(p) \right\}$.
The  Cartan non-split group  $\Gamma_{ns}^\varepsilon(p)$ is the group of determinant
  $1$ matrices in $M^{\varepsilon}_{ns}(p)$.
 We can also consider
\[ C^{\varepsilon +}_{ns}(p) = \left\{ \left( \begin{array}{ccc}
a & b \\
c & d  \\ \end{array} \right) \in M_{2 \times 2}(\mathbb{F}_{p}) : a \equiv d , c \equiv  b\varepsilon  \ \text{or}  \ a \equiv -d , c \equiv  -b\varepsilon \right\}.\]
and define $M^{\varepsilon +}_{ns}(p)$ and $\Gamma_{ns}^{\varepsilon +}(p)$ as before. The group  $\Gamma_{ns}^{\varepsilon +}(p)$ is called the normalizer of the Cartan 
non-split group.

Let $\mathcal{H}$ be the Poincar\'e  half plane, and consider the complex curve
$Y^{\varepsilon}_{ns}(p) = \Gamma^{\varepsilon}_{ns}(p) \backslash
\mathcal{H}$  whose compactification obtained by adding the cusps is the Cartan non-split modular curve of level $p$, $X^{\varepsilon}_{ns}(p)= \Gamma^{\varepsilon}_{ns}(p) \backslash
{\mathcal{H}^{*}}$.
Analogously, we can define
$X^{\varepsilon +}_{ns}(p)= \Gamma^{\varepsilon +}_{ns}(p) \backslash
{\mathcal{H}^{*}}$.
Since
$\det: ({C^{\varepsilon }_{ns}}(p))^{\times} \rightarrow \mathbb{F}^{\times}_{p}$ is
surjective, the modular curves $X^{\varepsilon}_{ns}(p)$ and
$X^{\varepsilon +}_{ns}(p)$ are both defined over $\QQ$ (see \cite{Shimura} Section $6.4$ and Proposition $6.27$).
\subsection{Moduli interpretation}
We will give a new moduli interpretation for the complex points of the Cartan non-split curve. For other moduli interpretations see \cite{Serre} Appendix $5$ and \cite{Rebolledo}.
Consider pairs $(E,\phi)$, where $E/\CC$ is an elliptic curve and
$\phi \in \End_{\FF_p}(E[p])$ satisfies that ${\phi}^2$ is
multiplication by $\varepsilon$. We identify two such pairs $(E,\phi)$,
$(E', \phi')$ if there exists an isomorphism of
elliptic curves $\Psi: E \to E'$ such that the following diagram is
commutative:
\[
\xymatrix{ E[p]
 \ar[rr]^{\phi}\ar[d]_{ \Psi }& &  E[p]  \ar[d]^{ \Psi }\\
E'[p] \ar[rr]_{\phi'}& & E'[p]}
\]

For any number field $K$, we say that the point $(E,\phi)$ is a $K$-rational point of the Cartan non-split curve if $E$ is an elliptic curve defined over $K$ and $\phi$ is defined 
over $K$. Recall that by definition $\phi$ is defined over $K$ if $\phi^{\sigma}=\phi$ for every $\sigma \in \Gal(\bar{K}/K)$, i.e. $\phi(P^{\sigma})=\phi(P)^\sigma$ for every
$P \in E[p]$ and every $\sigma \in \Gal(\bar{K}/K)$.
\begin{prop}
\label{prop:moduliinterpretation}
The moduli problem of pairs $(E,\phi)$ is represented by the Cartan
non-split curve $Y_{ns}^{\varepsilon}(p)$. The point
$\Gamma^{\varepsilon}_{ns}(p) \tau$ corresponds to the pair
$(E_{\tau},\phi_\tau)$, where
$E_{\tau}=\CC / \left\langle \tau,1 \right\rangle $ and $\phi_\tau$ is
the endomorphism of $E_{\tau}[p]$ whose matrix in the basis
$B_{\tau}=\left\{ \frac{1}{p} , \frac{\tau}{p}\right\} $ equals
$\left( \begin{smallmatrix}
    0 & 1\\
    \varepsilon & 0 \end{smallmatrix} \right)$.
\end{prop}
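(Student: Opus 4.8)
The plan is to establish a bijection between $\Gamma_{ns}^\varepsilon(p)\backslash \mathcal{H}$ and isomorphism classes of pairs $(E,\phi)$, and then check that this bijection is given by the explicit formula $\tau \mapsto (E_\tau,\phi_\tau)$. First I would recall the classical dictionary: every elliptic curve over $\CC$ is of the form $E_\tau = \CC/\langle \tau,1\rangle$ for some $\tau \in \mathcal{H}$, and the basis $B_\tau = \{\tfrac{1}{p},\tfrac{\tau}{p}\}$ identifies $E_\tau[p]$ with $\FF_p^2$ (column vectors). Under this identification, if $\gamma = \left(\begin{smallmatrix} a & b\\ c & d\end{smallmatrix}\right) \in \SL_2(\ZZ)$ then the induced isomorphism $E_\tau \xrightarrow{\sim} E_{\gamma\tau}$ (coming from multiplication by $(c\tau+d)^{-1}$) acts on $p$-torsion, in the bases $B_\tau$ and $B_{\gamma\tau}$, by the matrix $\bar\gamma$ (or its transpose/inverse, depending on conventions — I would fix the convention so that this is literally $\bar\gamma$). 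This is the key computation and I would carry it out carefully, since the whole moduli interpretation hinges on which matrix the change of lattice basis induces on torsion.

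Next I would define the candidate endomorphism: for $\tau \in \mathcal{H}$, let $\phi_\tau \in \End_{\FF_p}(E_\tau[p])$ be the endomorphism whose matrix in $B_\tau$ is $w := \left(\begin{smallmatrix} 0 & 1\\ \varepsilon & 0\end{smallmatrix}\right)$. Since $w^2 = \varepsilon \cdot \Id$, the pair $(E_\tau,\phi_\tau)$ satisfies the required condition $\phi_\tau^2 = [\varepsilon]$. Then I would check well-definedness on orbits: if $\tau' = \gamma\tau$ with $\gamma \in \Gamma_{ns}^\varepsilon(p)$, the isomorphism $\Psi\colon E_\tau \to E_{\tau'}$ above acts on torsion by $\bar\gamma$, and since $\bar\gamma \in C_{ns}^\varepsilon(p)$ commutes with $w$ (indeed $C_{ns}^\varepsilon(p)$ is exactly the centralizer of $w$ in $\M_{2\times 2}(\FF_p)$ — this is an elementary linear-algebra check: $\left(\begin{smallmatrix} a & b\\ c & d\end{smallmatrix}\right)$ commutes with $w$ iff $a=d$ and $c = b\varepsilon$), the diagram in the definition commutes, so $(E_\tau,\phi_\tau) \cong (E_{\tau'},\phi_{\tau'})$. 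This shows the map $\Gamma_{ns}^\varepsilon(p)\backslash\mathcal{H} \to \{(E,\phi)\}/\!\cong$ is well defined.

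It remains to prove this map is a bijection. For surjectivity, given any pair $(E,\phi)$, write $E \cong \CC/\langle \tau_0,1\rangle$ for some $\tau_0 \in \mathcal{H}$; transporting $\phi$ to $E_{\tau_0}[p] = \FF_p^2$ gives a matrix $M$ with $M^2 = \varepsilon\cdot\Id$. Since $\varepsilon$ is a non-square, $M$ has no eigenvalue in $\FF_p$ and $\FF_p^2$ is a simple $\FF_p[M]$-module; hence there is a basis in which $M = w$, i.e. there is $g \in \GL_2(\FF_p)$ with $g M g^{-1} = w$. Lifting $g$ to an element of $\GL_2(\ZZ_{(p)})$ and adjusting by $\SL_2$ to change the lattice basis (here one uses strong approximation / surjectivity of $\SL_2(\ZZ)\to\SL_2(\FF_p)$, together with scaling the basis by a $\QQ$-homothety to absorb the determinant) produces a $\tau$ with $(E,\phi)\cong(E_\tau,\phi_\tau)$. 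For injectivity, suppose $(E_\tau,\phi_\tau)\cong(E_{\tau'},\phi_{\tau'})$; any isomorphism $E_\tau \to E_{\tau'}$ is induced by $\tau' = \gamma\tau$ for some $\gamma\in\SL_2(\ZZ)$, and compatibility with $\phi$ forces $\bar\gamma$ to commute with $w$, hence $\bar\gamma \in C_{ns}^\varepsilon(p)$, i.e. $\gamma\in\Gamma_{ns}^\varepsilon(p)$, so the two points agree in $\Gamma_{ns}^\varepsilon(p)\backslash\mathcal{H}$. The main obstacle I anticipate is purely bookkeeping: pinning down the exact matrix (versus its inverse or transpose) that a lattice-basis change induces on $p$-torsion, and handling the determinant discrepancy when lifting $g\in\GL_2(\FF_p)$ to a genuine change of basis of $\langle\tau,1\rangle$ — both are routine but must be done with a fixed, consistent convention so that the stated formula with $w$ (rather than $w^{-1} = \varepsilon^{-1}w$) comes out exactly right.
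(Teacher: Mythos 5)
Your overall strategy is the same as the paper's: check that isomorphisms of pairs correspond exactly to the action of $\Gamma_{ns}^{\varepsilon}(p)$ (via the observation that $C_{ns}^{\varepsilon}(p)$ is the centralizer of $w=\left(\begin{smallmatrix}0&1\\ \varepsilon&0\end{smallmatrix}\right)$), and prove surjectivity by conjugating the matrix of $\phi$ into the form $w$ and lifting to $\SL_2(\ZZ)$. The paper sidesteps your transpose-versus-inverse bookkeeping by verifying the commutation relation directly on the basis vectors $\frac{1}{p},\frac{\tau}{p}$, which yields the congruences $a\equiv d$, $c\equiv \varepsilon b \pmod p$ without ever naming the matrix of $\Psi$; your route is fine too, since the centralizer condition is insensitive to the sign/inverse ambiguities involved.

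There is one step in your surjectivity argument that would fail as written: you propose to ``absorb the determinant'' of $g\in\GL_2(\FF_p)$ by scaling the lattice basis by a $\QQ$-homothety. A homothety $L\mapsto\lambda L$ induces the \emph{identity} matrix on $p$-torsion in the rescaled basis, so it cannot change $\det(g)$ at all; and genuine lattice basis changes only have determinant $\pm 1$. The correct (and still routine) fix is the one the paper isolates in Lemma~\ref{lemma:conjugationrepresentative}: the centralizer of $w$ in $\GL_2(\FF_p)$ is $C_{ns}^{\varepsilon\times}(p)\cong\FF_{p^2}^{\times}$, on which the determinant (norm) map to $\FF_p^{\times}$ is surjective, so you may post-compose $g$ with a centralizing element of determinant $\det(g)^{-1}$ to land in $\SL_2(\FF_p)$, and only then lift via the surjectivity of $\SL_2(\ZZ)\to\SL_2(\FF_p)$. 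You already have this tool in hand from your well-definedness step; you just need to use it here instead of the homothety.
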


Before writing the proof, we need an auxiliary Lemma.

\begin{lemma}
  Let $M \in \GL_2(\FF_p)$ satisfying $M^2 = \left(\begin{smallmatrix}
      \varepsilon & 0 \\ 0 & \varepsilon \end{smallmatrix} \right)$. Then, 
  there exists $A \in \SL_2(\ZZ)$ such that $\bar{A} M \bar{A}^{-1} =
  \left(\begin{smallmatrix} 0 & 1\\ \varepsilon & 0\end{smallmatrix}
  \right)$.
\label{lemma:conjugationrepresentative}
\end{lemma}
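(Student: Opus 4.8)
The plan is to exhibit explicitly a matrix in $\GL_2(\FF_p)$ that conjugates $\left(\begin{smallmatrix} 0 & 1\\ \varepsilon & 0\end{smallmatrix}\right)$ to $M$, then lift it to $\SL_2(\ZZ)$. First I would observe that the hypothesis $M^2 = \varepsilon\cdot\Id$ forces $M$ to be diagonalizable over $\FF_{p^2}$ with eigenvalues $\pm\sqrt{\varepsilon}$, neither of which lies in $\FF_p$ since $\varepsilon$ is a non-square; in particular $M$ has trace $0$ and determinant $-\varepsilon$, and the same is true of $\left(\begin{smallmatrix} 0 & 1\\ \varepsilon & 0\end{smallmatrix}\right)$. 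Two $2\times 2$ matrices over a field with the same characteristic polynomial which is separable (distinct roots) are conjugate — this is the rational canonical form statement — so there exists $\bar{B} \in \GL_2(\FF_p)$ with $\bar{B}\left(\begin{smallmatrix} 0 & 1\\ \varepsilon & 0\end{smallmatrix}\right)\bar{B}^{-1} = M$, equivalently $\bar{B}^{-1} M \bar{B} = \left(\begin{smallmatrix} 0 & 1\\ \varepsilon & 0\end{smallmatrix}\right)$.

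Alternatively, and more concretely, I would pick any vector $v \in \FF_p^2$ not killed by $M$ (in fact any nonzero $v$ works, since $M$ is invertible) and check that $\{Mv, v\}$ — or rather the basis in which $M$ has the companion-type shape — gives the conjugating matrix directly; writing $\bar{B}$ with columns a suitable $v$ and $Mv$, one computes $\bar{B}^{-1}M\bar{B} = \left(\begin{smallmatrix} 0 & 1\\ \varepsilon & 0\end{smallmatrix}\right)$ because $M$ satisfies $M^2 = \varepsilon$. Either way one lands at a matrix $\bar{B} \in \GL_2(\FF_p)$ doing the job.

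The remaining step is to replace $\bar{B}$ by the reduction of an element of $\SL_2(\ZZ)$. Since conjugation by $\bar{B}$ is unchanged if we scale $\bar{B}$ by an element of $\FF_p^\times$, we may first scale so that $\det\bar{B} = 1$, i.e. $\bar{B} \in \SL_2(\FF_p)$ (here I use that $\FF_p^\times \to \FF_p^\times$, $x \mapsto x^2$ hits $\det\bar B/1$... more simply: scaling $\bar B$ by $\lambda$ multiplies its determinant by $\lambda^2$, and $\det \bar B$ need not be a square, so instead just rescale one column — multiplying a single column of $\bar{B}$ by $\lambda$ also rescales the corresponding basis vector and still yields a valid conjugating matrix after adjusting, but cleanest is: the conjugation map $\GL_2(\FF_p) \to \mathrm{PGL}_2(\FF_p)$ and $\SL_2(\FF_p) \to \mathrm{PGL}_2(\FF_p)$ has image of index dividing $2$; since our target conjugation also comes from $\left(\begin{smallmatrix} 0 & 1\\ \varepsilon & 0\end{smallmatrix}\right)$ which has determinant $-\varepsilon$, one checks the relevant class is realized by $\SL_2$). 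Concretely it is simplest to just multiply one row of $\bar B$ by $(\det\bar B)^{-1}$: this changes the conjugation, so instead I multiply $\bar B$ on the right... To avoid this fuss I would argue: the set of $\bar C \in \GL_2(\FF_p)$ with $\bar C^{-1} M \bar C = \left(\begin{smallmatrix} 0 & 1\\ \varepsilon & 0\end{smallmatrix}\right)$ is a coset of the centralizer of $\left(\begin{smallmatrix} 0 & 1\\ \varepsilon & 0\end{smallmatrix}\right)$, which is $(C^\varepsilon_{ns}(p))^\times \cong \FF_{p^2}^\times$; the determinant map $\FF_{p^2}^\times \to \FF_p^\times$ is the norm, hence surjective, so this coset contains an element of every determinant value, in particular of determinant $1$. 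Thus we may take $\bar B \in \SL_2(\FF_p)$. Finally, since $\SL_2(\ZZ) \to \SL_2(\FF_p)$ is surjective (strong approximation, or the standard generators-and-relations fact), lift $\bar B$ to $A \in \SL_2(\ZZ)$; then $\bar A M \bar A^{-1}$... wait, I have $\bar B^{-1} M \bar B = \left(\begin{smallmatrix} 0 & 1\\ \varepsilon & 0\end{smallmatrix}\right)$, so with $A$ lifting $\bar B^{-1}$ (equivalently relabel) we get $\bar A M \bar A^{-1} = \left(\begin{smallmatrix} 0 & 1\\ \varepsilon & 0\end{smallmatrix}\right)$, as required.

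**Main obstacle.** The conceptual content is entirely the linear algebra — everything reduces to the fact that a separable characteristic polynomial pins down a conjugacy class in $\GL_2$ over a field, which is standard. The only genuine point requiring care is the determinant bookkeeping: ensuring the conjugating matrix can be chosen in $\SL_2(\FF_p)$ rather than merely $\GL_2(\FF_p)$, before lifting to $\SL_2(\ZZ)$. I would handle this cleanly via the centralizer-coset argument above, using that the centralizer of $\left(\begin{smallmatrix} 0 & 1\\ \varepsilon & 0\end{smallmatrix}\right)$ is exactly $(C^\varepsilon_{ns}(p))^\times \cong \FF_{p^2}^\times$ whose determinant (= norm) map onto $\FF_p^\times$ is surjective, so the set of conjugators meets $\SL_2(\FF_p)$ nontrivially.
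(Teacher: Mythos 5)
Your argument is correct and is essentially the same as the paper's: establish conjugacy in $\GL_2(\FF_p)$ via the common irreducible characteristic polynomial $x^2-\varepsilon$, adjust the determinant using that the centralizer of $\left(\begin{smallmatrix} 0 & 1\\ \varepsilon & 0\end{smallmatrix}\right)$ is $(C^{\varepsilon}_{ns}(p))^{\times}\cong\FF_{p^2}^{\times}$ with surjective determinant (norm) map, and then lift through the surjection $\SL_2(\ZZ)\to\SL_2(\FF_p)$. The only difference is expository: the exploratory detours about rescaling $\bar{B}$ could be deleted, since the centralizer-coset argument you settle on is exactly the clean (and the paper's) route.
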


\begin{proof}
  Clearly there exists $B \in \GL_2(\FF_p)$ such that $B^{-1}MB =
  \left(\begin{smallmatrix}0 & 1\\ \varepsilon & 0\end{smallmatrix}
  \right)$. Consider the centralizer of
  $\left(\begin{smallmatrix}0 & 1\\ \varepsilon & 0\end{smallmatrix}
  \right)$, which is given by $C_{ns}^{\varepsilon  \times}(p)$ and take any
  matrix $C$ of determinant $\det(B)^{-1}$ there. Then $BC \in \SL_2(\FF_p)$ and $(BC)^{-1}M (BC)= \left(\begin{smallmatrix}0 & 1\\ \varepsilon & 0\end{smallmatrix}
  \right)$. The result follows from the fact that the reduction map
  $\SL_2(\ZZ) \mapsto \SL_2(\FF_p)$ is surjective.
\end{proof}

\begin{proof}[Proof of Proposition 1.2]
  We need to check 
that the previous correspondence between points on
    $Y_{ns}^\varepsilon(p)$ and pairs $(E,\phi)$ is well defined
	and bijective.

        Let $\tau$ and $\tau'$ be points on $\mathcal{H}$
        corresponding to pairs
        $(E_{\tau},\phi_\tau),(E_{\tau '},\phi_{\tau'})$
        respectively. To prove that the map is well defined and
        injective is enough to prove that such pairs are isomorphic
        if and only if $\tau$ and $\tau'$ are equivalent under
        $\Gamma^{\varepsilon}_{ns}(p)$.  It is well known that any morphism $\Psi$ between
        two elliptic curves is given by multiplication by a complex
        number $\alpha$.  In particular, if $\Psi$ is an isomorphism,
        $\alpha\<\tau,1> = \<\tau',1>$, so that there exists
        $\left( \begin{smallmatrix}
            a & b \\
            c & d \end{smallmatrix} \right) \in \SL_{2}(\ZZ)$
        such that $\alpha \tau= a\tau '+b$ and $\alpha
        =c\tau'+d$. Moreover,
	$\Psi$ must satisfy $\phi_{\tau'}
  \Psi= \Psi \phi_\tau$. In the chosen basis, this is equivalent to:
  \begin{itemize}
  \item $\Psi(\phi_\tau(\frac{1}{p})) = \phi_{\tau'}(\Psi(\frac{1}{p}))$.
  \item $\Psi(\phi_\tau(\frac{\tau}{p})) = \phi_{\tau'}(\Psi(\frac{\tau}{p}))$.
  \end{itemize}

It is easy to see that the following holds:
\begin{itemize}
\item $\Psi(\phi_{\tau}(\frac{1}{p}))=\Psi(\frac{\tau \varepsilon }{p} )= \frac{
    \alpha \tau \varepsilon }{p}= \frac{ a  \varepsilon \tau' +b \varepsilon }{p}$.
\item $\Psi(\frac{1}{p})=\frac{\alpha}{p}=\frac{c\tau'+d}{p}$, so $\phi_{\tau'}(
  \Psi(\frac{1}{p}))= \frac{ c+ d  \varepsilon \tau '}{p}.$ 
\end{itemize}

Since equality holds modulo $\left\langle 1, \tau' \right\rangle$,
we get that $a \equiv d \bmod{p}$ and $c \equiv \varepsilon b
\bmod{p}$. This proves that the pairs $(E_\tau,\phi_\tau)$ and
$(E_{\tau'},\phi_{\tau'})$ are isomorphic by a map satisfying the
first condition if and only if $\tau$ and $\tau'$ are equivalent under
$\Gamma^{\varepsilon}_{ns}(p)$.\\
The commutative condition for the second basis elements is
similar and gives the same constraint.

To prove surjectivity, let $(E,\phi)$ be any pair as before. Up to
isomorphism we can assume that $E= \CC/\<\tau,1>$, where
$\tau \in \mathcal{H}$. Let
$B=\left\{ \frac{1}{p},\frac{\tau}{p}\right\}$ be a basis of
$E[p]$. By Lemma ~\ref{lemma:conjugationrepresentative}, there exists
a matrix $A\in \SL_2(\ZZ)$ such that
$\bar{A}[\phi]_B \bar{A}^{-1} = \left(\begin{smallmatrix}0 & 1\\
    \varepsilon & 0\end{smallmatrix} \right)$.
Hence, $[E,\phi] \simeq [\CC/\<A\cdot \tau,1>,\phi']$, where
$[\phi']_{\left\{\frac{1}{p},\frac{A\cdot \tau}{p}\right\}} =
\left(\begin{smallmatrix}0 & 1\\ \varepsilon & 0\end{smallmatrix}
\right)$. 
\end{proof}

\begin{rem}
  The moduli problem for $\Gamma^{\varepsilon +}_{ns}(p)$ consists on
  pairs $(E,\phi)$ as before, where two pairs $(E,\phi), (E',\phi')$ are isomorphic if
  $\Psi \phi = \pm \phi' \Psi$. There is an involution $\omega^{\varepsilon}_{p}$ acting on
  $X^{\varepsilon}_{ns}(p)$ given by
  $\omega^{\varepsilon}(E,\phi)=(E, -\phi)$, and 
  $X^{\varepsilon
    +}_{ns}(p)=X^{\varepsilon}_{ns}(p)/\omega^{\varepsilon}_{p}$.
	\label{rem:involution}
\end{rem}

\subsection{Modular forms and Hecke operators}
Let $\Gamma \subset \SL_2(\ZZ)$  be a congruence subgroup. Let
$f:\mathcal{H} \to \CC$ be an holomorphic function.  If
$\left(\begin{smallmatrix}a & b\\ c & d\end{smallmatrix} \right) \in
\Gamma$, and $k \in \ZZ$, we define the slash operator
\[
f|_k[\left(\begin{smallmatrix}a & b\\ c & d\end{smallmatrix} \right)](z) = (cz+d)^{-k} f\left(\frac{az+b}{cz+d}\right).
\]
Let $M_k(\Gamma)$ be the space of holomorphic functions which are
invariant under the previous action for all elements in $\Gamma$ and
which are holomorphic at all the cusps, and let $S_k(\Gamma)$ be the
subspace of cusp forms, i.e. those forms in $M_k(\Gamma)$ whose
$q$-expansions at all the cusps have zero constant coefficient. Let
$\Gamma(p)$ be the principal congruence subgroup of level $p$. The inclusion
$\Gamma(p) \subset \Gamma^{\varepsilon}_{ns}(p)$  gives a
reverse inclusion at the level of modular forms
$S_{2}(\Gamma^{\varepsilon}_{ns}(p)) \subset S_{2}(\Gamma(p))$.  If
$\alpha_{p}= \left( \begin{smallmatrix}
    p & 0 \\
    0 & 1 \\ \end{smallmatrix} \right)$
and $f \in S_{2}(\Gamma(p))$, $\tilde{f}=f |_2 [\alpha_{p}]$ is a
modular form with respect to
$({\alpha_{p}})^{-1} \Gamma(p) {\alpha_{p}}=\Gamma_{0}(p^{2})\cap
\Gamma_{1}(p)$.
Define $\tilde{\Gamma}(p) := \Gamma_{0}(p^{2})\cap
\Gamma_{1}(p)$.
Thus, slashing by $\alpha_{p}$ gives the isomorphism
$S_{2}(\Gamma(p)) \cong S_{2}(\tilde{\Gamma}(p))$.

There are two ways to define Hecke operators for classical
subgroups. The geometric way is to define them as correspondences on  the
modular curve and, via the moduli interpretation, translate this
action to an action on modular forms. The algebraic way is to define them
in terms of double coset operators. We will describe both definitions and we will prove that they agree.

\subsubsection{Geometric definition}
Let $n$ be a positive integer prime to $p$ and $(E,\phi)$ be a pair 
corresponding to a point on the moduli interpretation of the curve
$Y_{ns}^\varepsilon(p)$. Define the Hecke
operator \label{Heckegeometric}
\[
\Hecke^{\varepsilon}_n((E,\phi)) := \sum_{\psi:E \to E'}\left(E',\frac{1}{n}\psi \circ \phi \circ \hat{\psi}\right),
\]
where the sum is over degree $n$ isogenies $\psi:E \to E'$ of cyclic kernel, and
$\hat{\psi}$ denotes the dual isogeny. Note that since $\gcd(n,p)=1$,
$\frac{1}{n} \in \End_{\FF_p}(E'[p])$. Also, since $\psi \circ
\hat{\psi}$ and $\hat{\psi} \circ \psi$ are multiplication by $n$,
$(\frac{1}{n}\psi \circ \phi \circ \hat{\psi})\circ(\frac{1}{n}\psi
\circ \phi \circ \hat{\psi})$ is multiplication by $\varepsilon$,
so the points in the formula belong to $Y_{ns}^\varepsilon(p)$.

\subsubsection{Algebraic definition} We make a little survey of Hecke
operators for the Cartan non-split curve, following Shimura's book
\cite{Shimura}. Define
\[
\Delta_p:=\left\{A \in \M_{2\times 2}(\ZZ)\, : \, \det(A) >0 \text{ and } \gcd(p,\det(A))=1\right\},
\]
and  $\Delta^{\varepsilon}_{ns}(p):= \Delta_p \cap M^{\varepsilon}_{ns}(p)$. Moreover, consider
\[
\Delta(p):=\left\{A \in \Delta_p \, : \, \bar{A} \equiv \left(\begin{smallmatrix} 1 & 0 \\ 0 & * \end{smallmatrix} \right) \bmod p\right\}.
\]
Let $R(\Gamma^{\varepsilon}_{ns}(p),\Delta^{\varepsilon}_{ns}(p))$ and $R(\Gamma(p),\Delta(p))$ be the 
Hecke rings as defined in \cite{Shimura} page $54$. 

We need to introduce a new operator. Let $n \in \ZZ$ satisfy $p \nmid n$ and
let $B \in \Delta_{ns}^\varepsilon(p)$ be any matrix with determinant
congruent to $n$ modulo $p$. Let $A^{\varepsilon}_n \in \SL_2(\ZZ)$ be
such that
$A^{\varepsilon}_n \equiv B \left(\begin{smallmatrix} 1 & 0 \\ 0 &
    1/n \end{smallmatrix} \right) \bmod p$.
The action of $A^{\varepsilon}_n$ on
$S_{2}( \Gamma_{ns}^{\varepsilon}(p) )$ defines an operator that we
will denote $\upsilon^{\varepsilon}_n$.

\begin{lemma}
  The operator $\upsilon^{\varepsilon}_n$ defines an isomorphism from  $S_2(\Gamma_{ns}^\varepsilon(p))$ to
  $S_2(\Gamma_{ns}^{\varepsilon n^2}(p))$ which depends only on the class of $n$ modulo $p$. It is equal to the double coset
	operator $\Gamma_{ns}^\varepsilon(p) A^{\varepsilon}_{n} \Gamma_{ns}^{\varepsilon n^2}(p)$.
  \label{lemma:matA}
\end{lemma}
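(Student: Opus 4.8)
The plan is to verify the three assertions in turn. First, I would check that $A^{\varepsilon}_n$ genuinely maps $S_2(\Gamma_{ns}^\varepsilon(p))$ into $S_2(\Gamma_{ns}^{\varepsilon n^2}(p))$ by a direct conjugation computation. If $f \in S_2(\Gamma_{ns}^\varepsilon(p))$, then $f|_2[A^{\varepsilon}_n]$ is modular for $(A^{\varepsilon}_n)^{-1}\Gamma_{ns}^\varepsilon(p) A^{\varepsilon}_n$, so the point is to identify this conjugate group. Reducing modulo $p$ and using $A^{\varepsilon}_n \equiv B\left(\begin{smallmatrix} 1 & 0 \\ 0 & 1/n\end{smallmatrix}\right)$, one computes that conjugating a matrix $\left(\begin{smallmatrix} a & b \\ \varepsilon b & a\end{smallmatrix}\right) \in C_{ns}^\varepsilon(p)$ by $\left(\begin{smallmatrix} 1 & 0 \\ 0 & 1/n\end{smallmatrix}\right)$ gives $\left(\begin{smallmatrix} a & nb \\ \varepsilon b/n & a\end{smallmatrix}\right)$, and since $c \equiv (\varepsilon/n^2)(nb) \bmod p$, this lies in $C_{ns}^{\varepsilon n^2}(p)$; the further conjugation by $\bar B \in C_{ns}^{\varepsilon\times}(p)$ (which centralizes $C_{ns}^\varepsilon(p)$, hence normalizes it) leaves us inside $C_{ns}^{\varepsilon n^2}(p)$ up to the same bookkeeping. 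Since $A^{\varepsilon}_n$ has determinant $1$, the same argument run backwards shows the conjugate group is exactly $\Gamma_{ns}^{\varepsilon n^2}(p)$, so $\upsilon^{\varepsilon}_n$ is a well-defined isomorphism with inverse given by $A^{\varepsilon n^2}_{1/n}$ (or simply $(A^{\varepsilon}_n)^{-1}$).

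For the dependence only on $n \bmod p$: if $A, A'$ are two choices of matrices in $\SL_2(\ZZ)$ with $A \equiv A' \equiv B\left(\begin{smallmatrix} 1 & 0 \\ 0 & 1/n\end{smallmatrix}\right) \bmod p$ for possibly different $B$'s with $\det B \equiv n$, then $A(A')^{-1} \in \SL_2(\ZZ)$ reduces mod $p$ to an element of $C_{ns}^{\varepsilon\times}(p)$ (both reductions differ by conjugation within the Cartan ring), hence lies in $\Gamma_{ns}^\varepsilon(p)$; therefore $A' = \gamma A$ with $\gamma \in \Gamma_{ns}^\varepsilon(p)$, and $f|_2[A'] = (f|_2[\gamma])|_2[A] = f|_2[A]$ for $f \in S_2(\Gamma_{ns}^\varepsilon(p))$. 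One checks similarly that replacing $n$ by $n + p$ changes $\left(\begin{smallmatrix} 1 & 0 \\ 0 & 1/n\end{smallmatrix}\right)$ only modulo $p$, so the reduction of $A^{\varepsilon}_n$ is unaffected.

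Finally, I would identify $\upsilon^{\varepsilon}_n$ with the double coset operator attached to $\Gamma_{ns}^\varepsilon(p) A^{\varepsilon}_n \Gamma_{ns}^{\varepsilon n^2}(p)$. The key point is that this double coset is a single coset: since $\det A^{\varepsilon}_n = 1$, one has $\Gamma_{ns}^\varepsilon(p) A^{\varepsilon}_n \Gamma_{ns}^{\varepsilon n^2}(p) = \Gamma_{ns}^\varepsilon(p) A^{\varepsilon}_n = A^{\varepsilon}_n \Gamma_{ns}^{\varepsilon n^2}(p)$, because $A^{\varepsilon}_n \Gamma_{ns}^{\varepsilon n^2}(p) (A^{\varepsilon}_n)^{-1} = \Gamma_{ns}^\varepsilon(p)$ by the first step. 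Hence the double coset operator, which by definition sends $f$ to $\sum_i f|_2[A^{\varepsilon}_n \gamma_i]$ over coset representatives, has exactly one term and equals $f|_2[A^{\varepsilon}_n] = \upsilon^{\varepsilon}_n(f)$, up to the normalization convention for double coset operators in \cite{Shimura}. The main obstacle I anticipate is purely bookkeeping: keeping the three moving targets — the nonsquare $\varepsilon$ versus $\varepsilon n^2$, the auxiliary matrix $B$, and the $\SL_2$-lift $A^{\varepsilon}_n$ — consistently aligned through the conjugations, and making sure the normalization factor in the double coset operator (which for $\det = 1$ operators is trivial) does not introduce a spurious scalar.
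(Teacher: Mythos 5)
Your proposal is correct and follows essentially the same route as the paper: the conjugation identity $\left(\begin{smallmatrix}1&0\\0&1/n\end{smallmatrix}\right)^{-1}C_{ns}^{\varepsilon}(p)\left(\begin{smallmatrix}1&0\\0&1/n\end{smallmatrix}\right)=C_{ns}^{\varepsilon n^2}(p)$ together with the observation that two admissible lifts differ by an element of $\Gamma_{ns}^{\varepsilon}(p)$ and hence act identically, plus the (correct, and left implicit in the paper) remark that the double coset collapses to a single coset because $A^{\varepsilon}_n$ has determinant one and conjugates one group onto the other. The only blemish is a bookkeeping slip: your displayed intermediate matrix $\left(\begin{smallmatrix}a&nb\\ \varepsilon b/n&a\end{smallmatrix}\right)$ is the conjugate in the wrong direction and as written lies in $C_{ns}^{\varepsilon/n^{2}}(p)$, whereas the group under which $f|_2[A^{\varepsilon}_n]$ is invariant is $(A^{\varepsilon}_n)^{-1}\Gamma_{ns}^{\varepsilon}(p)A^{\varepsilon}_n$, so the relevant computation is $\left(\begin{smallmatrix}1&0\\0&n\end{smallmatrix}\right)\left(\begin{smallmatrix}a&b\\ \varepsilon b&a\end{smallmatrix}\right)\left(\begin{smallmatrix}1&0\\0&1/n\end{smallmatrix}\right)=\left(\begin{smallmatrix}a&b/n\\ n\varepsilon b&a\end{smallmatrix}\right)\in C_{ns}^{\varepsilon n^{2}}(p)$, which yields exactly the conclusion you state.
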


\begin{proof}
  Since $\bar{B} \in C_{ns}^\varepsilon(p)$, and
  $\left( \begin{smallmatrix} 1& 0 \\ 0 & 1/n \end{smallmatrix}
  \right)^{-1} C_{ns}^\varepsilon(p) \left( \begin{smallmatrix} 1& 0
      \\ 0 & 1/n \end{smallmatrix} \right) = C_{ns}^{\varepsilon
    n^2}(p)$,
  the first assertion follows. Let $B$ and $B'$ be matrices in
  $\Delta_{ns}^\varepsilon(p)$ of determinant $n$ and $n'$
  respectively with $n \equiv n' \bmod{p}$. Choose any two
  matrices $A^{\varepsilon}_{n}$ and $A^{\varepsilon}_{n'}$
  corresponding to $B$ and $B'$ respectively. Clearly
  $A^{\varepsilon}_{n} {A^{\varepsilon}_{n'}}^{-1} \in
  \Gamma_{ns}^\varepsilon(p)$, therefore, this matrix  acts trivially.
\end{proof}

Let
$h:R(\Gamma(p),\Delta(p)) \rightarrow
R(\Gamma^{\varepsilon}_{ns}(p),\Delta^{\varepsilon}_{ns}(p))$
be the map given by
$\Gamma(p) \beta \Gamma(p) \mapsto \Gamma^{\varepsilon}_{ns}(p)
A_{\det(\beta)}^{\varepsilon} \beta \Gamma^{\varepsilon}_{ns}(p)$.

\begin{prop}
  The map  $h$ is an isomorphism of Hecke rings.
\label{lemma:heckeiso}
\end{prop}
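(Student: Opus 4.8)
The plan is to construct an explicit inverse to $h$ and check that both composites are the identity. First I would observe that the group $\Gamma(p)$ is normal in $\SL_2(\ZZ)$ with quotient $\SL_2(\FF_p)$, so that the Hecke ring $R(\Gamma(p),\Delta(p))$ can be understood via the semigroup $\Delta(p)$ modulo $p$; similarly $\Gamma_{ns}^\varepsilon(p)$ sits inside $\SL_2(\ZZ)$ and $\Delta_{ns}^\varepsilon(p)$ reduces into $C_{ns}^\varepsilon(p)\cap\GL_2(\FF_p)$. The key structural fact is that for any $\beta\in\Delta(p)$ the matrix $A^\varepsilon_{\det\beta}\beta$ lies in $\Delta^\varepsilon_{ns}(p)$: indeed $\overline{A^\varepsilon_{\det\beta}}\equiv \bar B\left(\begin{smallmatrix}1&0\\0&1/\det\beta\end{smallmatrix}\right)$ with $\bar B\in C_{ns}^\varepsilon(p)$, and $\bar\beta\equiv\left(\begin{smallmatrix}1&0\\0&\det\beta\end{smallmatrix}\right)$, so the product reduces to an element of $C_{ns}^\varepsilon(p)$ of the correct determinant. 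Conversely, given $\gamma\in\Delta^\varepsilon_{ns}(p)$ one sets $\beta=(A^\varepsilon_{\det\gamma})^{-1}\gamma$, which lands in $\Delta(p)$ by the same reduction computation, and this gives the candidate inverse $h^{-1}\colon \Gamma^\varepsilon_{ns}(p)\gamma\Gamma^\varepsilon_{ns}(p)\mapsto \Gamma(p)(A^\varepsilon_{\det\gamma})^{-1}\gamma\Gamma(p)$.

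Next I would verify that $h$ and this candidate are well-defined on double cosets, i.e. independent of the choices of double coset representative and of the auxiliary matrix $A^\varepsilon_n$. Independence of $A^\varepsilon_n$ up to the class of $n$ modulo $p$ is exactly Lemma~\ref{lemma:matA}; independence of the coset representative follows because replacing $\beta$ by $\gamma_1\beta\gamma_2$ with $\gamma_i\in\Gamma(p)$ changes $A^\varepsilon_{\det\beta}\beta$ by left and right multiplication by elements of $\Gamma^\varepsilon_{ns}(p)$ (using that $\Gamma(p)\subset\Gamma^\varepsilon_{ns}(p)$ and that $A^\varepsilon_{\det\beta}\gamma_1(A^\varepsilon_{\det\beta})^{-1}\in\Gamma^\varepsilon_{ns}(p)$ since conjugation by $A^\varepsilon_{\det\beta}$ preserves $C_{ns}^\varepsilon(p)$ when $\det\gamma_1\equiv 1$). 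Then $h\circ h^{-1}$ and $h^{-1}\circ h$ are visibly the identity on double cosets, because $A^\varepsilon_{\det((A^\varepsilon_n)^{-1}\gamma)}=A^\varepsilon_{\det\gamma}$ up to $\Gamma^\varepsilon_{ns}(p)$ (the determinant is unchanged), so the two constructions cancel. This shows $h$ is a bijection on the underlying free abelian groups.

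Finally I would check that $h$ is a ring homomorphism, i.e. compatible with the multiplication of double cosets. The cleanest route is to recall from Shimura's book that multiplication in $R(\Gamma,\Delta)$ is described by decomposing double cosets into right cosets $\Gamma\beta\Gamma=\bigsqcup_i\Gamma\beta_i$ and writing $(\Gamma\beta\Gamma)(\Gamma\beta'\Gamma)=\sum_j m_j\,\Gamma\delta_j\Gamma$ with $m_j$ counting pairs $(\beta_i,\beta'_k)$ with $\beta_i\beta'_k\in\Gamma\delta_j$. I would show that the map $\Gamma(p)\beta\mapsto\Gamma^\varepsilon_{ns}(p)A^\varepsilon_{\det\beta}\beta$ induces a bijection on right cosets (again using normality of $\Gamma(p)$ in $\SL_2$ and that $\Gamma(p)\subset\Gamma^\varepsilon_{ns}(p)$ has index equal to $|\SL_2(\FF_p)|/|C_{ns}^{\varepsilon\,\times}(p)\cap\SL_2|$, matching the index on the $\Delta$ side since $\det$ is surjective), and that it is multiplicative on representatives up to elements of $\Gamma^\varepsilon_{ns}(p)$: $A^\varepsilon_{\det(\beta\beta')}(\beta\beta')$ and $(A^\varepsilon_{\det\beta}\beta)(A^\varepsilon_{\det\beta'}\beta')$ differ by $A^\varepsilon_{\det(\beta\beta')}(A^\varepsilon_{\det\beta})^{-1}\cdot(\beta A^\varepsilon_{\det\beta'}\beta^{-1})\cdot(A^\varepsilon_{\det\beta'})^{-1}$, and one checks this product lies in $\Gamma^\varepsilon_{ns}(p)$ by reducing mod $p$ — here the multiplicativity $A^\varepsilon_{mn}\equiv A^\varepsilon_m A^\varepsilon_n$ modulo $p$ (up to $\Gamma^\varepsilon_{ns}(p)$, and noting $\varepsilon m^2 n^2\equiv\varepsilon$ iff relevant) together with $\beta\mapsto\bar\beta$ diagonal makes everything collapse. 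The main obstacle is this last compatibility check: one must be careful that the twisting matrices $A^\varepsilon_n$ move between the rings $S_2(\Gamma^{\varepsilon}_{ns}(p))$ and $S_2(\Gamma^{\varepsilon n^2}_{ns}(p))$, so the bookkeeping of which Cartan subgroup one is in at each stage — and the fact that on the source and target of $h$ we only ever see $\Gamma^\varepsilon_{ns}(p)$ because the relevant determinants are squares times the trivial coset — needs to be made precise. Once the right-coset bijection and multiplicativity-on-representatives are established, that $h$ preserves products follows formally.
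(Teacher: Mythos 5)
Your overall strategy (explicit inverse plus direct verification of multiplicativity on double cosets) is workable in principle, but as written it has a concrete gap at its very first step: the claimed inverse is not well defined. For $\gamma\in\Delta_{ns}^{\varepsilon}(p)$ the "same reduction computation" does \emph{not} put $(A^{\varepsilon}_{\det\gamma})^{-1}\gamma$ in $\Delta(p)$. Writing $\overline{A^{\varepsilon}_{n}}\equiv \bar B\left(\begin{smallmatrix}1&0\\0&1/n\end{smallmatrix}\right)$ with $\bar B\in C_{ns}^{\varepsilon}(p)$ of determinant $n=\det\gamma$, one gets $\overline{(A^{\varepsilon}_{n})^{-1}\gamma}\equiv\left(\begin{smallmatrix}1&0\\0&n\end{smallmatrix}\right)\bar B^{-1}\bar\gamma$, and $\bar B^{-1}\bar\gamma$ is an arbitrary determinant-one element of $C_{ns}^{\varepsilon}(p)^{\times}$ (a "norm-one" element of $\FF_{p^2}^{\times}$), not the identity. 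So the product is congruent to $\left(\begin{smallmatrix}1&0\\0&*\end{smallmatrix}\right)$ only for the special choice $\bar B=\bar\gamma$. If you make that adapted choice the formula does land in $\Delta(p)$, but then you must prove the result is independent of the representative $\gamma$ of the $\Gamma^{\varepsilon}_{ns}(p)$-double coset and of the remaining choices — and that independence is exactly the nontrivial uniqueness statement (each $\SL_2(\ZZ)$-double coset meets $\Delta(p)$ in a single $\Gamma(p)$-double coset, and $\Delta^{\varepsilon}_{ns}(p)$ in a single $\Gamma^{\varepsilon}_{ns}(p)$-double coset) that your argument was supposed to supply. Your verification that $h\circ h^{-1}$ and $h^{-1}\circ h$ are "visibly the identity" rests on the broken formula.

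The multiplicativity step is also under-justified: the congruence $A^{\varepsilon}_{mn}\equiv A^{\varepsilon}_{m}A^{\varepsilon}_{n}$ (even up to $\Gamma^{\varepsilon}_{ns}(p)$) fails, since $\overline{A^{\varepsilon}_{m}A^{\varepsilon}_{n}}\equiv \bar B_{m}\cdot\bigl(\left(\begin{smallmatrix}1&0\\0&1/m\end{smallmatrix}\right)\bar B_{n}\left(\begin{smallmatrix}1&0\\0&m\end{smallmatrix}\right)\bigr)\cdot\left(\begin{smallmatrix}1&0\\0&1/mn\end{smallmatrix}\right)$ and the middle factor lies in $C_{ns}^{\varepsilon m^{2}}(p)$, not $C_{ns}^{\varepsilon}(p)$, so the product of the first two factors need not lie in any Cartan; you flag this bookkeeping as "the main obstacle" but then assert it collapses. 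The paper avoids all of this by a shorter route: both $R(\Gamma(p),\Delta(p))$ and $R(\Gamma^{\varepsilon}_{ns}(p),\Delta^{\varepsilon}_{ns}(p))$ map isomorphically onto $R(\SL_2(\ZZ),\Delta_p)$ via $\Gamma\alpha\Gamma\mapsto\SL_2(\ZZ)\alpha\SL_2(\ZZ)$ (this is Shimura's Proposition 3.31, which packages precisely the double-coset uniqueness and right-coset compatibility you would otherwise have to prove by hand), and $h$ is the composite $h_1^{-1}h_2$. If you want to keep your direct approach, you should either prove the two uniqueness statements above directly, or reduce to the level-one Hecke ring as the paper does.
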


\begin{proof} We have a map 
  $h_{1}:R(\Gamma^{\varepsilon}_{ns}(p),\Delta^{\varepsilon}_{ns}(p)) \rightarrow R(\SL_2(\ZZ),\Delta_p)$ given by
  \[\Gamma^{\varepsilon}_{ns}(p) \alpha \Gamma^{\varepsilon}_{ns}(p) \mapsto \SL_2(\ZZ) \alpha
  \SL_2(\ZZ),\]
  and a map  $h_{2}:R(\Gamma(p),\Delta(p)) \rightarrow R(\SL_2(\ZZ),\Delta_p)$ given by
  \[\Gamma(p) \beta \Gamma(p) \mapsto \SL_2(\ZZ) \beta 
  \SL_2(\ZZ).\]
  Both maps are easily seen to be isomorphisms of Hecke rings by the same proof used in \cite{Shimura} Proposition $3.31$. Moreover, the  map $h=h_{1}^{-1}h_{2}$ is given by
  $\Gamma(p) \beta \Gamma(p)  \mapsto  \Gamma^{\varepsilon}_{ns}(p)  A_{\det(\beta)}^{\varepsilon} \beta  \Gamma^{\varepsilon}_{ns}(p) $ and gives the desired isomorphism.
\end{proof}

We can consider the classical Hecke operators $T_{n}$ acting on $S_{2}(\tilde{\Gamma}(p))$ for $n$ relatively prime to $p$. Slashing by $\alpha_{p}$ we obtain
the corresponding Hecke operator $T_{n}$ acting on $S_{2}(\Gamma(p))$. In view of the above proposition we define  the Hecke operator  $\Hecke^{\varepsilon}_{n}  \in R(\Gamma^{\varepsilon}_{ns}(p),\Delta^{\varepsilon}_{ns}(p))$  as the operator  $h(T_{n})$.

\begin{lemma}
If $\beta \in \Delta(p)$, the operator $\Gamma(p) \beta \Gamma(p)$ acting on $S_{2}(\Gamma_{ns}^{\varepsilon n^2}(p))$ is equal to the operator $\Gamma^{\varepsilon n^2}_{ns}(p) \beta \Gamma^{\varepsilon }_{ns}(p)$.
\label{lemma:Tn}
\end{lemma}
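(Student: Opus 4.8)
The plan is to compute both double coset operators using one and the same system of right coset representatives, so that the claimed equality becomes the tautology that two identical sums of slash operators agree.

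Since $\beta\in\Delta(p)$ we may write $\bar\beta=\left(\begin{smallmatrix}1&0\\0&n\end{smallmatrix}\right)$ with $n\equiv\det\beta\bmod p$; this is the $n$ occurring in the statement, and exactly as in the proof of Lemma~\ref{lemma:matA} one has $\bar\beta\,C^{\varepsilon}_{ns}(p)\,\bar\beta^{-1}=C^{\varepsilon n^{2}}_{ns}(p)$. The first step is the group--theoretic identity
\[
\beta^{-1}\Gamma^{\varepsilon n^{2}}_{ns}(p)\beta\cap\Gamma(p)=\beta^{-1}\Gamma(p)\beta\cap\Gamma(p)=:H .
\]
The inclusion $\supseteq$ is clear; for $\subseteq$, if $g\in\Gamma(p)$ and $\beta g\beta^{-1}\in\Gamma^{\varepsilon n^{2}}_{ns}(p)\subseteq\SL_{2}(\ZZ)$, then its reduction modulo $p$ is $\bar\beta\,\bar g\,\bar\beta^{-1}=\Id$, so $\beta g\beta^{-1}\in\Gamma(p)$.

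Now fix representatives $\gamma_{1},\dots,\gamma_{r}$ for $H\backslash\Gamma(p)$. By the standard decomposition of double cosets (\cite{Shimura}, Section~$3.1$) one has $\Gamma(p)\beta\Gamma(p)=\bigsqcup_{k=1}^{r}\Gamma(p)\beta\gamma_{k}$, and I claim the very same representatives work on the Cartan side:
\[
\Gamma^{\varepsilon n^{2}}_{ns}(p)\,\beta\,\Gamma^{\varepsilon}_{ns}(p)=\bigsqcup_{k=1}^{r}\Gamma^{\varepsilon n^{2}}_{ns}(p)\beta\gamma_{k}.
\]
Disjointness is immediate from the identity for $H$. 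Since $\beta H\beta^{-1}\subseteq\Gamma(p)\subseteq\Gamma^{\varepsilon n^{2}}_{ns}(p)$ we get $\bigcup_{k}\Gamma^{\varepsilon n^{2}}_{ns}(p)\beta\gamma_{k}=\Gamma^{\varepsilon n^{2}}_{ns}(p)\beta\Gamma(p)$, so it remains to show $\beta\delta\in\Gamma^{\varepsilon n^{2}}_{ns}(p)\beta\Gamma(p)$ for every $\delta\in\Gamma^{\varepsilon}_{ns}(p)$ — this is the only step with real content. Put $N=\det\beta$. As $\gcd(N,p)=1$, the reduction $\Gamma(p)\to\SL_{2}(\ZZ/N\ZZ)$ is surjective, so we may pick $\gamma\in\Gamma(p)$ with $\delta\gamma^{-1}\equiv\Id\bmod N$. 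Writing $\beta^{-1}=\tfrac1N\adj(\beta)$, the lattice $L:=\beta^{-1}\ZZ^{2}$ satisfies $\ZZ^{2}\subseteq L\subseteq\tfrac1N\ZZ^{2}$, and any integral matrix congruent to $\Id$ modulo $N$ fixes $L$; hence $\nu:=\beta(\delta\gamma^{-1})\beta^{-1}$ maps $\beta L=\ZZ^{2}$ into itself, i.e. $\nu\in\M_{2\times2}(\ZZ)$. As $\det\nu=1$ and $\bar\nu=\bar\beta\,\bar\delta\,\bar\beta^{-1}\in\bar\beta\,C^{\varepsilon}_{ns}(p)\,\bar\beta^{-1}=C^{\varepsilon n^{2}}_{ns}(p)$, we conclude $\nu\in\Gamma^{\varepsilon n^{2}}_{ns}(p)$, and $\beta\delta=\nu\beta\gamma\in\Gamma^{\varepsilon n^{2}}_{ns}(p)\beta\Gamma(p)$, as needed.

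Finally, for $f\in S_{2}(\Gamma^{\varepsilon n^{2}}_{ns}(p))\subseteq S_{2}(\Gamma(p))$, both double coset operators are by definition computed from the displayed decompositions, so each sends $f$ to $\sum_{k=1}^{r}f|_{2}[\beta\gamma_{k}]$ (up to the common normalization factor attached to $\det\beta$), hence they coincide; in particular the image lies in $S_{2}(\Gamma^{\varepsilon}_{ns}(p))$. The main obstacle is precisely the covering claim $\beta\delta\in\Gamma^{\varepsilon n^{2}}_{ns}(p)\beta\Gamma(p)$, i.e. adjusting $\delta$ within its coset $\delta\Gamma(p)$ so that conjugation by $\beta$ stays integral; this is handled by the lattice--congruence argument above, and everything else is bookkeeping with cosets.
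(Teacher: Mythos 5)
Your proof is correct. The paper's own proof is the single line ``Mimics the proof of Lemma~\ref{lemma:matA}'', whose essential content is the conjugation identity $\bar\beta\,C^{\varepsilon}_{ns}(p)\,\bar\beta^{-1}=C^{\varepsilon n^2}_{ns}(p)$ that you also rely on; your argument takes the same route but fills in the details the paper omits, namely the verification --- via strong approximation and the lattice argument showing $\beta\delta\gamma^{-1}\beta^{-1}$ is integral --- that the two double cosets admit a common system of right coset representatives, so that the two operators are given by literally the same sum of slash operators.
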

\begin{proof}
Mimics the proof of Lemma \ref{lemma:matA}.
\end{proof}

\begin{prop} As operators on $S_{2}(\Gamma_{ns}^\varepsilon(p))$, $\Hecke^{\varepsilon}_n =  T_n \circ \upsilon^{\varepsilon}_{n}.$
\label{prop:heckedecomposition}
\end{prop}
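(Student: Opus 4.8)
The plan is to write all three operators as double-coset operators and match their left-coset representatives, moving the matrix $A^{\varepsilon}_{n}$ across everything by means of the identity $\Gamma_{ns}^{\varepsilon}(p)\,A^{\varepsilon}_{n} = A^{\varepsilon}_{n}\,\Gamma_{ns}^{\varepsilon n^2}(p)$.

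First I would record the conjugation fact underlying $\upsilon^{\varepsilon}_{n}$, which is essentially already contained in the proof of Lemma~\ref{lemma:matA}: reducing modulo $p$ and using that $\overline{A^{\varepsilon}_{n}} \equiv \bar B\,\left(\begin{smallmatrix}1 & 0\\ 0 & 1/n\end{smallmatrix}\right)$ with $\bar B$ in the commutative ring $C_{ns}^{\varepsilon}(p)$, one obtains $(A^{\varepsilon}_{n})^{-1}\,\Gamma_{ns}^{\varepsilon}(p)\,A^{\varepsilon}_{n} = \Gamma_{ns}^{\varepsilon n^2}(p)$, equivalently $A^{\varepsilon}_{n}\,\Gamma_{ns}^{\varepsilon n^2}(p) = \Gamma_{ns}^{\varepsilon}(p)\,A^{\varepsilon}_{n}$. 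In particular $\Gamma_{ns}^{\varepsilon}(p)\,A^{\varepsilon}_{n}\,\Gamma_{ns}^{\varepsilon n^2}(p) = \Gamma_{ns}^{\varepsilon}(p)\,A^{\varepsilon}_{n}$ is a single coset, so by Lemma~\ref{lemma:matA} the operator $\upsilon^{\varepsilon}_{n}$ is just $f \mapsto f|_2[A^{\varepsilon}_{n}]$.

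Next I would write $T_n \in R(\Gamma(p),\Delta(p))$ in its standard double-coset form $\sum_k \Gamma(p)\beta_k\Gamma(p)$, with each $\beta_k \in \Delta(p)$ of determinant $n$ (this is the usual shape of the prime-to-$p$ Hecke operators, and it is preserved under the isomorphism $S_2(\Gamma(p)) \cong S_2(\tilde\Gamma(p))$ given by $\alpha_p$). Applying $h$ termwise (Proposition~\ref{lemma:heckeiso}) and taking $A^{\varepsilon}_{\det\beta_k} = A^{\varepsilon}_{n}$ for all $k$, we get $\Hecke^{\varepsilon}_{n} = h(T_n) = \sum_k \Gamma_{ns}^{\varepsilon}(p)\,A^{\varepsilon}_{n}\beta_k\,\Gamma_{ns}^{\varepsilon}(p)$. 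On the other hand, Lemma~\ref{lemma:Tn} applied termwise identifies $T_n$, restricted to $S_2(\Gamma_{ns}^{\varepsilon n^2}(p))$, with $\sum_k \Gamma_{ns}^{\varepsilon n^2}(p)\,\beta_k\,\Gamma_{ns}^{\varepsilon}(p)$ (the equality $\det\beta_k = n$ being exactly what makes the target group $\Gamma_{ns}^{\varepsilon}(p)$). Choosing for each $k$ a left-coset decomposition $\Gamma_{ns}^{\varepsilon n^2}(p)\,\beta_k\,\Gamma_{ns}^{\varepsilon}(p) = \bigsqcup_l \Gamma_{ns}^{\varepsilon n^2}(p)\,\gamma_{k,l}$ and left-multiplying by $A^{\varepsilon}_{n}$, the first step gives $\Gamma_{ns}^{\varepsilon}(p)\,A^{\varepsilon}_{n}\beta_k\,\Gamma_{ns}^{\varepsilon}(p) = A^{\varepsilon}_{n}\,\Gamma_{ns}^{\varepsilon n^2}(p)\,\beta_k\,\Gamma_{ns}^{\varepsilon}(p) = \bigsqcup_l \Gamma_{ns}^{\varepsilon}(p)\,A^{\varepsilon}_{n}\gamma_{k,l}$, so $\{A^{\varepsilon}_{n}\gamma_{k,l}\}_l$ is a complete set of representatives for the $k$-th double coset appearing in $h(T_n)$. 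Hence, for $f \in S_2(\Gamma_{ns}^{\varepsilon}(p))$, the cocycle identity $g|_2[\alpha]|_2[\beta] = g|_2[\alpha\beta]$ (valid on $\GL_2^+(\RR)$) together with $f|_2[\gamma] = f$ for $\gamma \in \Gamma_{ns}^{\varepsilon}(p)$ yields $h(T_n)\,f = \sum_{k,l} f|_2[A^{\varepsilon}_{n}\gamma_{k,l}] = \sum_{k,l}(f|_2[A^{\varepsilon}_{n}])|_2[\gamma_{k,l}] = T_n(f|_2[A^{\varepsilon}_{n}]) = (T_n\circ\upsilon^{\varepsilon}_{n})\,f$, which is the assertion. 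The determinant normalizations of the two double-coset operators match, since $\det(A^{\varepsilon}_{n}\gamma_{k,l}) = n = \det\beta_k$ whereas $\det A^{\varepsilon}_{n} = 1$.

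The routine ingredients — the standard double-coset shape of $T_n$ and the cocycle bookkeeping — present no difficulty; the one point requiring genuine care is keeping the three Hecke-ring descriptions aligned (for $\Gamma(p)$, for $\Gamma_{ns}^{\varepsilon n^2}(p)$ via Lemma~\ref{lemma:Tn}, and for $\Gamma_{ns}^{\varepsilon}(p)$ via $h$) and checking that the same representatives $\beta_k$, and then the same $\gamma_{k,l}$, serve all of them at once. I expect this bookkeeping, rather than any essential difficulty, to be the main obstacle.
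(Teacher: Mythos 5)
Your proof is correct and follows essentially the same route as the paper: both factor the double coset $\Gamma^{\varepsilon}_{ns}(p)A^{\varepsilon}_{n}\beta\,\Gamma^{\varepsilon}_{ns}(p)$ as the product of the degree-one coset $\Gamma_{ns}^{\varepsilon}(p)A^{\varepsilon}_{n}\Gamma_{ns}^{\varepsilon n^2}(p)$ (Lemma~\ref{lemma:matA}) with $\Gamma^{\varepsilon n^2}_{ns}(p)\beta\,\Gamma^{\varepsilon}_{ns}(p)$ (Lemma~\ref{lemma:Tn}). The only difference is that the paper cites Shimura's Proposition~3.7 for the double-coset multiplication, where you verify it by hand with explicit left-coset representatives.
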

\begin{proof}
 Proposition $3.7$ of \cite{Shimura} says that 
 \[ \Gamma^{\varepsilon}_{ns}(p)   A_{\det(\beta)}^{\varepsilon} \beta \Gamma^{\varepsilon}_{ns}(p)=\Gamma_{ns}^\varepsilon(p) A^{\varepsilon}_{n} \Gamma_{ns}^{\varepsilon n^2}(p)
\Gamma^{\varepsilon n^2}_{ns}(p) \beta \Gamma^{\varepsilon}_{ns}(p). \]
 Therefore, the result follows from Lemma \ref{lemma:matA} and Lemma \ref{lemma:Tn}.
\end{proof}

\begin{coro}
 If $n \equiv 1 \bmod{p}$,  $\Hecke_{n}^\varepsilon=T_{n}.$
\label{coro:sameHecke}
\end{coro}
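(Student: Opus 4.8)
The plan is to derive this directly from Proposition~\ref{prop:heckedecomposition}, which gives $\Hecke_n^\varepsilon = T_n \circ \upsilon_n^\varepsilon$ as operators on $S_2(\Gamma_{ns}^\varepsilon(p))$; it therefore suffices to show that $\upsilon_n^\varepsilon$ is the identity operator whenever $n \equiv 1 \bmod p$. First I would observe that in this case $\varepsilon n^2 \equiv \varepsilon \bmod p$, so $C_{ns}^{\varepsilon n^2}(p) = C_{ns}^\varepsilon(p)$ and hence $\Gamma_{ns}^{\varepsilon n^2}(p) = \Gamma_{ns}^\varepsilon(p)$; thus $\upsilon_n^\varepsilon$ is in fact an endomorphism of $S_2(\Gamma_{ns}^\varepsilon(p))$, and it makes sense to ask whether it is trivial.

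Next, by Lemma~\ref{lemma:matA} the operator $\upsilon_n^\varepsilon$ depends only on the class of $n$ modulo $p$, so $\upsilon_n^\varepsilon = \upsilon_1^\varepsilon$. To compute $\upsilon_1^\varepsilon$ I would make the explicit choice $B = \Id \in \Delta^{\varepsilon}_{ns}(p)$, which has determinant $1 \equiv 1 \bmod p$; then $B \left(\begin{smallmatrix} 1 & 0 \\ 0 & 1 \end{smallmatrix}\right) = \Id$, so $A_1^\varepsilon = \Id$ is a legitimate choice for the matrix defining $\upsilon_1^\varepsilon$. Since the slash action of $\Id \in \SL_2(\ZZ)$ on modular forms is trivial, $\upsilon_1^\varepsilon = \mathrm{id}$, hence $\upsilon_n^\varepsilon = \mathrm{id}$, and therefore $\Hecke_n^\varepsilon = T_n$. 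There is no real obstacle here: the statement is a formal consequence of the decomposition in Proposition~\ref{prop:heckedecomposition} together with the well-definedness (up to $\Gamma_{ns}^\varepsilon(p)$) of $A_n^\varepsilon$ established in Lemma~\ref{lemma:matA}; the only point worth checking is the harmless fact that $A_1^\varepsilon$ may be taken to be the identity matrix.
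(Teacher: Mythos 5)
Your proof is correct and follows essentially the same route as the paper: the paper's own argument is simply that $A_n^{\varepsilon}$ may be taken to be the identity matrix when $n \equiv 1 \bmod p$, so that $\upsilon_n^{\varepsilon}$ is the identity and the claim follows from Proposition~\ref{prop:heckedecomposition}. Your additional checks (that $\Gamma_{ns}^{\varepsilon n^2}(p) = \Gamma_{ns}^{\varepsilon}(p)$ and the reduction to $n=1$ via Lemma~\ref{lemma:matA}) are harmless elaborations of the same idea.
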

\begin{proof}
Since the matrix $A^{\varepsilon}_{n}$ can be taken to be the identity, $\upsilon^{\varepsilon}_{n}$ is the identity map.
\end{proof}

Exactly in the same way as Proposition \ref{prop:heckedecomposition} we can prove the following.
\begin{prop} For any $n$ prime to $p$, the operators 
$T_n: S_{2}(\Gamma^{\varepsilon}_{ns}(p)) \to
  S_{2}(\Gamma^{\varepsilon/{n}^2}_{ns}(p))$ and $\upsilon^{\varepsilon}_n : S_{2}(\Gamma^{\varepsilon}_{ns}(p)) \to
  S_{2}(\Gamma^{\varepsilon n^2}_{ns}(p))$
  are morphisms of Hecke modules.
\label{prop:heckeaction}
\end{prop}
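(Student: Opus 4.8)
The statement to prove is that, for $n$ prime to $p$, the operators $T_n \colon S_2(\Gamma^{\varepsilon}_{ns}(p)) \to S_2(\Gamma^{\varepsilon/n^2}_{ns}(p))$ and $\upsilon^{\varepsilon}_n \colon S_2(\Gamma^{\varepsilon}_{ns}(p)) \to S_2(\Gamma^{\varepsilon n^2}_{ns}(p))$ are morphisms of Hecke modules. The plan is to imitate the proof of Proposition~\ref{prop:heckedecomposition} verbatim, replacing the role of $\Hecke^{\varepsilon}_n$ by $T_m$ and $\upsilon^{\varepsilon}_m$ for an arbitrary auxiliary index $m$ prime to $p$. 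Concretely, one must show that for every Hecke operator $\Hecke^{\varepsilon}_m \in R(\Gamma^{\varepsilon}_{ns}(p),\Delta^{\varepsilon}_{ns}(p))$ (equivalently, writing $\Hecke^{\varepsilon}_m = h(T_m)$ as in the paragraph before Lemma~\ref{lemma:Tn}), the diagrams
\[
\xymatrix{
S_2(\Gamma^{\varepsilon}_{ns}(p)) \ar[r]^{T_n} \ar[d]_{\Hecke^{\varepsilon}_m} & S_2(\Gamma^{\varepsilon/n^2}_{ns}(p)) \ar[d]^{\Hecke^{\varepsilon/n^2}_m}\\
S_2(\Gamma^{\varepsilon}_{ns}(p)) \ar[r]_{T_n} & S_2(\Gamma^{\varepsilon/n^2}_{ns}(p))
}
\qquad
\xymatrix{
S_2(\Gamma^{\varepsilon}_{ns}(p)) \ar[r]^{\upsilon^{\varepsilon}_n} \ar[d]_{\Hecke^{\varepsilon}_m} & S_2(\Gamma^{\varepsilon n^2}_{ns}(p)) \ar[d]^{\Hecke^{\varepsilon n^2}_m}\\
S_2(\Gamma^{\varepsilon}_{ns}(p)) \ar[r]_{\upsilon^{\varepsilon}_n} & S_2(\Gamma^{\varepsilon n^2}_{ns}(p))
}
\]
commute.

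First I would reduce both commutativity statements to an identity of double cosets. By Proposition~\ref{lemma:heckeiso} and Lemma~\ref{lemma:Tn}, each of $T_n$, $\upsilon^{\varepsilon}_n$ and $\Hecke^{\varepsilon}_m$ is realized as a double coset operator associated to a matrix in $\Delta(p)$ (resp. in $\SL_2(\ZZ)$, after the twist), acting between the appropriate spaces $S_2(\Gamma^{\varepsilon'}_{ns}(p))$; indeed $\upsilon^{\varepsilon}_n$ is $\Gamma^{\varepsilon}_{ns}(p)A^{\varepsilon}_n\Gamma^{\varepsilon n^2}_{ns}(p)$, $T_n$ is given by a matrix $\beta \in \Delta(p)$ of determinant $n$ acting as $\Gamma^{\varepsilon}_{ns}(p)\beta\Gamma^{\varepsilon/n^2}_{ns}(p)$ (the twist direction following the same centralizer computation $\left(\begin{smallmatrix}1&0\\0&1/n\end{smallmatrix}\right)C^{\varepsilon}_{ns}(p)\left(\begin{smallmatrix}1&0\\0&1/n\end{smallmatrix}\right)^{-1}=C^{\varepsilon n^2}_{ns}(p)$ as in Lemma~\ref{lemma:matA}), and $\Hecke^{\varepsilon}_m$ is given by $A^{\varepsilon}_m \gamma$ with $\gamma \in \Delta(p)$ of determinant $m$. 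The composites in each square are then double coset products in the Hecke ring, and the square commutes provided the two orders of composition give the same element of $R(\Gamma^{\varepsilon}_{ns}(p),\Delta^{\varepsilon}_{ns}(p))$ (up to the twist bookkeeping of superscripts).

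Next I would establish that equality using the isomorphism $h$ and the fact — already proved in Proposition~\ref{lemma:heckeiso} and its proof — that $R(\Gamma^{\varepsilon}_{ns}(p),\Delta^{\varepsilon}_{ns}(p)) \cong R(\SL_2(\ZZ),\Delta_p)$, which is \emph{commutative} (it is the usual Hecke algebra of $\SL_2(\ZZ)$, generated by the $T_\ell$ and $\langle\cdot\rangle$, all commuting). Under $h_1$ the operator $T_n$ maps to the classical $T_n \in R(\SL_2(\ZZ),\Delta_p)$ and $\Hecke^{\varepsilon}_m$ maps to the classical $T_m$; since these commute in $R(\SL_2(\ZZ),\Delta_p)$ and $h_1$ is a ring isomorphism, the corresponding double coset operators commute on modular forms, which is exactly the commutativity of the first square. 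For the second square the same argument applies, tracking the twisted target group $\Gamma^{\varepsilon n^2}_{ns}(p)$ and using that the centralizer conjugation intertwines the two actions — this is the content of "exactly in the same way as Proposition~\ref{prop:heckedecomposition}". I expect the only real subtlety, and the step to write carefully, to be the bookkeeping of the superscripts $\varepsilon$, $\varepsilon n^2$, $\varepsilon/n^2$ along the edges of the diagrams: one must check that the source and target Cartan groups match up so that the composed double cosets actually lie in a single Hecke ring where commutativity can be invoked, rather than in an incompatible pair of modules. Once the diagram is set up with the correct superscripts, commutativity is immediate from Proposition~\ref{lemma:heckeiso} and the commutativity of the classical Hecke algebra, so I would keep the proof to one or two sentences referring back to those results.
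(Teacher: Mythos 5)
Your proposal is correct and follows essentially the same route as the paper: the paper's proof is simply the remark that the argument of Proposition~\ref{prop:heckedecomposition} applies verbatim, i.e.\ one writes each operator as a double coset, compares with $R(\SL_2(\ZZ),\Delta_p)$ via the isomorphisms of Proposition~\ref{lemma:heckeiso} (Shimura's Proposition 3.7 and Lemma 3.29), and invokes commutativity there, which is exactly your reduction. Your explicit attention to the superscript bookkeeping ($\varepsilon$, $\varepsilon n^2$, $\varepsilon/n^2$) is precisely the only detail the paper leaves implicit.
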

\begin{thm}
  The geometric and algebraic definitions of Hecke operators coincide.
\label{prop:heckeoperatorscompatibility}
\end{thm}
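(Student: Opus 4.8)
The plan is to unwind both definitions on the level of lattices, where the moduli interpretation and the double-coset description become directly comparable. Recall that a point of $Y_{ns}^\varepsilon(p)$ is a pair $(E,\phi) = (\CC/\Lambda, \phi)$ together with a choice (up to $\Gamma_{ns}^\varepsilon(p)$) of a symplectic-type basis of $E[p]$ in which $\phi$ has matrix $\left(\begin{smallmatrix}0 & 1\\ \varepsilon & 0\end{smallmatrix}\right)$, namely $B_\tau = \{\frac1p, \frac\tau p\}$ for $\Lambda = \langle \tau,1\rangle$. The cyclic degree-$n$ isogenies $\psi\colon E\to E'$ correspond to sublattices $\Lambda \supset \Lambda'$ with $\Lambda/\Lambda'$ cyclic of order $n$, equivalently to the cosets represented by the matrices $\beta$ appearing in the decomposition of the classical double coset $\SL_2(\ZZ)\left(\begin{smallmatrix}n&0\\0&1\end{smallmatrix}\right)\SL_2(\ZZ) = \coprod_\beta \SL_2(\ZZ)\beta$. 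So first I would fix such a $\beta$, let $E' = \CC/\Lambda'$ with $\Lambda' = \beta\cdot\Lambda$ (suitably normalized so that $\Lambda'$ is again of the form $\langle\tau',1\rangle$), and compute explicitly the matrix of $\frac1n\,\psi\circ\phi\circ\hat\psi$ on $E'[p]$ in the standard basis $B_{\tau'}$.

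Next, the key computation: since $\gcd(n,p)=1$, the isogeny $\psi$ induces an isomorphism $E[p]\xrightarrow{\sim}E'[p]$, and in the chosen bases this isomorphism is given by the reduction $\bar\beta$ (up to the normalizing $\SL_2(\ZZ)$-matrix that returns $\Lambda'$ to standard form). Dualizing contributes $\overline{\beta^{\mathrm{adj}}} = \det(\beta)\bar\beta^{-1}$, and the factor $\frac1n$ cancels the $\det(\beta)\equiv n$. Hence the matrix of $\frac1n\psi\circ\phi\circ\hat\psi$ in $B_{\tau'}$ is conjugate, by an element reducing to something in $C_{ns}^{\varepsilon}(p)$-normalizer bookkeeping, to $\bar\beta\left(\begin{smallmatrix}0&1\\ \varepsilon&0\end{smallmatrix}\right)\bar\beta^{-1}$. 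I would then invoke Lemma~\ref{lemma:conjugationrepresentative} to straighten this back to standard form by a matrix in $\SL_2(\ZZ)$; tracking this straightening matrix shows that $(E',\frac1n\psi\circ\phi\circ\hat\psi)$, as a point of the Cartan curve for the \emph{appropriate} twist $\varepsilon' = \varepsilon\cdot(\text{square})=\varepsilon$, is exactly the point obtained by applying the double-coset operator $\Gamma_{ns}^\varepsilon(p)\,A^\varepsilon_{\det\beta}\,\beta\,\Gamma_{ns}^\varepsilon(p) = h(T_n)$ to $(E,\phi)$, where $A^\varepsilon_{\det\beta}$ is precisely the normalizing correction of Proposition~\ref{lemma:heckeiso} and the $\upsilon$-operator of Proposition~\ref{prop:heckedecomposition} accounts for the determinant twist. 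Summing over all $\beta$ matches the sum over all cyclic $\psi$ in the geometric definition, and gives $\Hecke_n^\varepsilon$.

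Finally I would dualize from points to forms: both operators act on $S_2(\Gamma_{ns}^\varepsilon(p)) = H^0(X_{ns}^\varepsilon(p),\Omega^1)$ by pullback along the correspondence, and a correspondence on the curve that agrees on $\CC$-points (as a cycle on $X\times X$) induces the same operator on differentials. So it suffices that the two finite sums of points agree, which is what the previous paragraph establishes. In fact, since $T_n$ on $S_2(\tilde\Gamma(p))$ matches its classical moduli-theoretic definition (this is the standard Eichler–Shimura fact for $\Gamma_0(p^2)\cap\Gamma_1(p)$, transported via $\alpha_p$), and $\upsilon_n^\varepsilon$ is by Lemma~\ref{lemma:matA} literally the double-coset/lattice operation $\left(\begin{smallmatrix}1&0\\0&1/n\end{smallmatrix}\right)$-conjugation, Proposition~\ref{prop:heckedecomposition} reduces the whole theorem to the single identity $\Hecke_n^\varepsilon(E,\phi) = (T_n\circ\upsilon_n^\varepsilon)(E,\phi)$ on the moduli level, which is the lattice computation above.

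The main obstacle I expect is the bookkeeping of the twist parameter and the normalizing matrix $A^\varepsilon_n$: one must check that after conjugating $\bar\beta\left(\begin{smallmatrix}0&1\\ \varepsilon&0\end{smallmatrix}\right)\bar\beta^{-1}$ by the Lemma~\ref{lemma:conjugationrepresentative} matrix, the \emph{specific} matrix that appears is the one defining $\upsilon_n^\varepsilon$ composed with the geometric $T_n$ — not just something conjugate to it — so that the operators agree on the nose and not merely up to an automorphism of the Cartan curve. Concretely this means verifying that the square class obstruction is trivial (the determinant of the straightening matrix lands in $(\FF_p^\times)^2$ exactly because $\det\beta\equiv n$ and we divided by $n$), and that the residual ambiguity is killed by $\Gamma_{ns}^\varepsilon(p)$ as in the proofs of Lemmas~\ref{lemma:matA} and~\ref{lemma:Tn}. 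Everything else is a routine translation between sublattices, dual isogenies, and cosets of $\SL_2(\ZZ)$.
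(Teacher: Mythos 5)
Your overall strategy --- identify the coset representatives of the double coset with the cyclic degree-$n$ isogenies and track what happens to $\phi$ --- is the same as the paper's, but you take the harder fork at the decisive moment, and that fork is where your argument stops short. You enumerate the isogenies by \emph{arbitrary} representatives $\beta$ of the classical double coset $\SL_2(\ZZ)\left(\begin{smallmatrix}n&0\\0&1\end{smallmatrix}\right)\SL_2(\ZZ)$, so the induced endomorphism of $E'[p]$ comes out as $\bar\beta\left(\begin{smallmatrix}0&1\\ \varepsilon&0\end{smallmatrix}\right)\bar\beta^{-1}$ and must then be straightened via Lemma~\ref{lemma:conjugationrepresentative}; the matching of the straightening matrices with $A^\varepsilon_n$ and $\upsilon^\varepsilon_n$ is precisely the ``main obstacle'' you flag at the end and leave unresolved --- and it is the entire content of the theorem in your setup, not routine bookkeeping, since one must get equality of operators on the nose and not up to conjugation. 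The paper's proof dissolves this issue by choosing the representatives differently: by Lemma 3.29(5) of Shimura, a set of representatives for $\Gamma_{ns}^{\varepsilon}(p)\backslash\Gamma_{ns}^{\varepsilon}(p)A^\varepsilon_n\left(\begin{smallmatrix}1&0\\0&n\end{smallmatrix}\right)\Gamma_{ns}^{\varepsilon}(p)$ is simultaneously a set of representatives for $\SL_2(\ZZ)\backslash\SL_2(\ZZ)\left(\begin{smallmatrix}1&0\\0&n\end{smallmatrix}\right)\SL_2(\ZZ)$, hence simultaneously enumerates the cyclic degree-$n$ isogenies. Each such representative $A$ and its adjugate lie in $\Delta_{ns}^{\varepsilon}(p)$, so their reductions mod $p$ centralize $\left(\begin{smallmatrix}0&1\\ \varepsilon&0\end{smallmatrix}\right)$ and $A\adj(A)=n\Id$; consequently $\frac{1}{n}\psi\circ\phi\circ\hat\psi$ already has the standard matrix in the standard basis $B_{A\tau}$, and no straightening, no square-class check, and no residual $\Gamma_{ns}^{\varepsilon}(p)$-ambiguity ever arise. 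I recommend restructuring along these lines rather than pushing through the conjugation bookkeeping: choose the representatives inside $\Delta_{ns}^{\varepsilon}(p)$ from the start and the identification of the two definitions becomes a few lines.
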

\begin{proof} 
  We can restrict to $n$ prime and $n \neq p$. It is enough to see
  that the set of representatives used in one definition can be taken
  as representatives for the other one.  Take representatives for
  $\Gamma^{\varepsilon}_{ns}(p) A^{\varepsilon}_{n}
  \left(\begin{smallmatrix} 1 & 0 \\ 0 & n \end{smallmatrix} \right)
  \Gamma^{\varepsilon}_{ns}(p)$
  modulo $\Gamma^{\varepsilon}_{ns}(p)$. By \cite{Shimura} Lemma
  $3.29$ part $(5)$ these are also representatives for
  $\SL_2(\ZZ) A^{\varepsilon}_{n} \left(\begin{smallmatrix} 1 & 0 \\ 0
      & n \end{smallmatrix} \right) \SL_2(\ZZ)=\SL_2(\ZZ)
  \left(\begin{smallmatrix} 1 & 0 \\ 0 & n \end{smallmatrix} \right)
  \SL_2(\ZZ)$
  modulo $\SL_2(\ZZ)$. This set of representatives coincides with a
  set of representatives of cyclic isogenies of degree $n$. Each
  representative is a matrix $A$ of determinant $n$. The dual isogeny
  is given by the matrix $\adj(A)$. Both matrices belong to
  $\Delta^{\varepsilon}_{ns}(p)$, thus, they commute with the matrix
  $\left(\begin{smallmatrix} 0 & 1 \\ \varepsilon &
      0 \end{smallmatrix} \right)$
  modulo $p$ and $A \adj(A)=n \Id$. Therefore, recalling the
  geometric definition we have that
  $ \Hecke^{\varepsilon}_n([\CC/\<\tau,1>,\phi_\tau])=
  \sum_{A}[\CC/{\scriptstyle\<{A \tau},1>},\phi_{A}]$,
  where
  $[\phi_A]_{\left\{\frac{1}{p}, \frac{A \tau}{p}\right\}} =
  \left(\begin{smallmatrix} 0 & 1 \\\varepsilon & 0 \end{smallmatrix}
  \right)$ as desired.
   \end{proof}
\subsection{ Chen-Edixhoven isogeny theorem}
If $\C$ is a curve, we denote by $\Jac(\C)$ its Jacobian.
\begin{thm}[Chen-Edixhoven]
 The new part of  $\Jac(X^{+}_{0}(p^2))$  is isogenous to $\Jac(X^{\varepsilon +}_{ns}(p))$. Furthermore, the new part of $\Jac(X_{0}(p^2))$ and $\Jac(X^{\varepsilon}_{ns}(p))$ are isogenous. In addition, the isogenies are Hecke equivariant.

\label{thm:chen}
\end{thm}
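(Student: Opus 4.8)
The plan is to deduce the Chen--Edixhoven isogeny theorem from the character-group (or, analytically, from the modular-forms) comparison together with the Hecke equivariance established in the preceding subsection, rather than reproving it from scratch. Concretely, I would work on the analytic side via the isomorphism $S_2(\Gamma(p)) \cong S_2(\Gamma_{ns}^\varepsilon(p)) \oplus (\text{complement})$ coming from the action of $(C_{ns}^\varepsilon(p))^\times/\{\pm 1\} \cong \mathbb{F}_{p^2}^\times/\mathbb{F}_p^\times$ on $S_2(\Gamma(p))$, and identify $S_2(\Gamma_{ns}^\varepsilon(p))$ with the subspace on which this cyclic group acts trivially. The key input is that, under the decomposition of $S_2(\Gamma(p))$ into pieces indexed by characters of the relevant quotient groups, the piece fixed by the Cartan non-split group matches exactly the $p$-new subspace of $S_2(\Gamma_0(p^2))$ (respectively, after also quotienting by $\omega_p^\varepsilon$ as in Remark~\ref{rem:involution}, the $p$-new subspace of $S_2(\Gamma_0^+(p^2))$). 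This is the arithmetic heart of Chen's original argument; I would cite \cite{Chen} and \cite{Edi} for it and simply record the conclusion.

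First I would set up the comparison of Jacobians via the Eichler--Shimura isomorphism: for a congruence subgroup $\Gamma$, $\Jac(X_\Gamma)(\CC) \cong S_2(\Gamma)^\vee / H_1(X_\Gamma,\ZZ)$, so an isomorphism of Hecke modules between $S_2(\Gamma_{ns}^\varepsilon(p))^{\text{new}}$-type spaces and $S_2(\Gamma_0(p^2))^{p\text{-new}}$ yields an isogeny between the corresponding abelian varieties (an isomorphism on tangent spaces that need not preserve the integral lattices, hence only an isogeny). Second, I would exhibit the Hecke-module isomorphism: by Proposition~\ref{lemma:heckeiso} the Hecke ring $R(\Gamma_{ns}^\varepsilon(p),\Delta_{ns}^\varepsilon(p))$ is isomorphic to $R(\Gamma(p),\Delta(p))$, and by Corollary~\ref{coro:sameHecke} the operators $\Hecke_n^\varepsilon$ agree with the classical $T_n$ on the relevant common subspace; slashing by $\alpha_p$ (as recalled in the text) identifies $S_2(\Gamma(p))$ with $S_2(\tilde\Gamma(p))$ compatibly with these Hecke operators, and $\tilde\Gamma(p) = \Gamma_0(p^2)\cap\Gamma_1(p)$ sits between $\Gamma_1(p^2)$ and $\Gamma_0(p^2)$, so one can cut out the $\Gamma_0(p^2)$-part by averaging over the diamond action. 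Tracking which characters survive on the Cartan side pins down the $p$-new subspace of $S_2(\Gamma_0(p^2))$. Third, for the normalizer statement I would simply take $\omega_p^\varepsilon$-invariants on both sides: $X_{ns}^{\varepsilon+}(p) = X_{ns}^\varepsilon(p)/\omega_p^\varepsilon$ by Remark~\ref{rem:involution}, and the Atkin--Lehner involution $w_{p^2}$ plays the matching role on $X_0(p^2)$, giving $X_0^+(p^2)$; the isogeny descends to the quotients because it is Hecke-(and Atkin--Lehner-)equivariant.

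The main obstacle, and the step I would not try to reprove, is the precise matching of the Cartan non-split fixed subspace of $S_2(\Gamma(p))$ with the $p$-new subspace of $S_2(\Gamma_0(p^2))$: this requires analyzing the action of $\GL_2(\FF_p)/\{\pm 1\}$ on $S_2(\Gamma(p))$, decomposing it according to the Bruhat cells / the classification of its subgroups (split torus, non-split torus, Borel, \dots), and checking that the non-split-torus-invariants have the same dimension and Hecke structure as $S_2(\Gamma_0(p^2))^{p\text{-new}}$ — a representation-theoretic dimension count that is exactly the content of \cite{Chen} and \cite{Edi}. I would therefore phrase the proof as: invoke \cite{Chen,Edi} for that identification, then feed it through Eichler--Shimura and the Hecke-ring isomorphism of Proposition~\ref{lemma:heckeiso} to get the Hecke-equivariant isogeny, and finally pass to $\omega_p^\varepsilon$- (resp.\ $w_{p^2}$-) invariants for the normalizer version.
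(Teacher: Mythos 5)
Your proposal reaches the right conclusion and, like the paper, ultimately rests on citing \cite{Chen} and \cite{Edi} for the hard identification; but the two arguments produce the isogeny and its Hecke equivariance by genuinely different mechanisms. The paper does not pass through cusp forms at all: it takes the $\QQ$-isogenies directly from Theorem 1 of \cite{Chen}, Theorem 1.1 of \cite{Edi} and Theorem 2 of \cite{SmEd}, and then obtains Hecke equivariance from the functoriality statement in \cite{SmEd} (the idempotent decompositions there are functorial in the pair $(M,\alpha)$, hence commute with every endomorphism of $M$ commuting with the $G$-action, which for Jacobians of modular curves means all Hecke correspondences of level prime to $p$). Your route instead rebuilds the isogeny from a Hecke-module isomorphism of spaces of cusp forms via Eichler--Shimura, using Proposition~\ref{lemma:heckeiso} and Corollary~\ref{coro:sameHecke} to match the operators, and gets equivariance by construction. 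That works, but it buys you less for free: an isomorphism of $\CC$-vector spaces of cusp forms does not by itself give a map of abelian varieties (the integral homology lattices need not correspond), so to land on an isogeny you must either invoke multiplicity one plus the standard decomposition of the new quotient into the $A_f$ and then Faltings' isogeny theorem to descend to $\QQ$, or else revert to the geometric correspondences of \cite{Edi}/\cite{SmEd} anyway. The paper's citation of \cite{SmEd} is precisely what lets it avoid this lattice/rationality issue, and it is the one ingredient your write-up omits; the passage to the normalizer via $\omega_p^\varepsilon$-invariants (Remark~\ref{rem:involution}) and the matching Atkin--Lehner involution on $X_0(p^2)$ is fine and is implicit in the cited sources.
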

 
\begin{proof}
  See Theorem 1 of \cite{Chen} , Theorem 1.1 of \cite{Edi} and Theorem
  2 of \cite{SmEd}. Although the Hecke equivariant condition is not
  explicitly stated, by Theorem 2 of \cite{SmEd} the decompositions
  are functorial in $(M ,\alpha)$, hence they are preserved by all
  endomorphisms of $M$ that commute with the $G$-action. In the case
  of Jacobians of modular curves this means that the isogenies commute
  with all Hecke operators of level relatively prime to $p$. 
\end{proof}
  In particular, if we start with a normalized newform
  $g \in S_2(\Gamma_0(p^2))$ such that $T_{n} g= \lambda_{n}g$ for all
  $n$ relatively prime to $p$, Theorem~\ref{thm:chen} implies the existence of a
 form
  $g_{\varepsilon} \in S_2(\Gamma_{ns}^\varepsilon(p))$ such that
  $\Hecke^{\varepsilon}_{n} g_{\varepsilon}= \lambda_{n}
  g_{\varepsilon}$
  for all $n$ relatively prime to $p$. 
  
  Chen-Edixhoven's Theorem plus Multiplicity one for classical newforms (Theorem $5.8.2$ of \cite{MR2112196})
  for $S_2(\Gamma_0(p^2))$ give multiplicity one for a system of
  eigenvalues for the Hecke algebra $R(\Gamma^{\varepsilon}_{ns}(p),\Delta^{\varepsilon}_{ns}(p))$.

  Our primary goal is to compute the Fourier expansion of
  $g_{\varepsilon}$. Since $g_\varepsilon$ is an eigenfunction for the
  Hecke operators $\Hecke^{\varepsilon}_{n}$, and since such operators
  coincide with $T_n$ if $n \equiv 1 \pmod p$, our form lies in the
  space of eigenfunctions for $T_n$ with eigenvalues $\lambda_n$ for
  $n \equiv 1 \pmod p$. Then we can write
  $\tilde{g_{\varepsilon}}=g_\varepsilon|_2[\alpha_p]$ as a linear
  combination of eigenforms on $S_2(\widetilde{\Gamma(p)})$ which have
  the same eigenvalues as $g$ for $n \equiv 1 \pmod p$.  

Let $\A = \{ f \in S_2(\widetilde{\Gamma(p)})  \, \text{eigenform} \,: \, \lambda_n(g) = \lambda_n(f) \text{ for all }n \equiv 1 \pmod p\}$. Can we characterize $\A$?

\medskip

In general, if $\chi$ is a character modulo $N$ and
$h \in S_2(\Gamma(N))$ is a newform we denote by $h \otimes \chi$ the
\emph{twist} of $h$ by $\chi$.  If $T_{n}h= \lambda_{n}(h) h $ then
$T_{n} (h \otimes \chi )= \lambda_{n}(h) \chi(n) (h \otimes \chi)$.
This implies that if $\chi$ is a character modulo $p$,
  $g \otimes \chi \in \A$.
\begin{thm}
\label{thm:rajan}
Let $f \in S_{2}(\Gamma_{0}(p^2), \psi)$ be an eigenform for the
classical Hecke algebra, where $\psi$ is a character
modulo $p$. Let $g \in S^{new}_{2}(\Gamma_0(p^2))$ be an
eigenform without complex multiplication, and suppose that $f$ and $g$ have the same
eigenvalues on a set of primes of positive upper density. Then, there
exists a Dirichlet character $\chi$ modulo $p$ such that the
eigenforms $g \otimes \chi$ and $f$ have the same eigenvalues at all
but a finite number of primes.
\end{thm}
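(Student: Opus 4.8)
The plan is to reduce Theorem~\ref{thm:rajan} to a statement about the associated two-dimensional $\ell$-adic Galois representations and then invoke a known rigidity result (due to Rajan, building on Faltings) about representations agreeing on a density-positive set of primes. First I would attach to $f$ and $g$ their compatible systems of $\lambda$-adic representations $\rho_{f,\ell},\rho_{g,\ell}\colon \Gal(\bar\QQ/\QQ)\to\GL_2(\bar\QQ_\ell)$, so that $\Tr\rho_{f,\ell}(\mathrm{Frob}_q)=\lambda_q(f)$ and $\Tr\rho_{g,\ell}(\mathrm{Frob}_q)=\lambda_q(g)$ for all but finitely many primes $q$. Both are odd, irreducible (here the hypothesis that $g$ has no complex multiplication is used: it guarantees the image of $\rho_{g,\ell}$ is ``large'', in particular not dihedral, so that the projective image is not contained in a normalizer of a torus), and ramified only at $p$ and $\ell$. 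The determinant of $\rho_{f,\ell}$ is $\psi\cdot\chi_{\mathrm{cyc},\ell}$ for the nebentypus $\psi$ modulo $p$, and the determinant of $\rho_{g,\ell}$ is just $\chi_{\mathrm{cyc},\ell}$ since $g$ has trivial nebentypus.

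Next I would apply the theorem of Rajan (``On the image of a Galois representation attached to a modular form'', or the relevant statement in his paper on strong multiplicity one up to twist): if two semisimple $\ell$-adic representations of a number field that are unramified outside a finite set have the same traces of Frobenius on a set of primes of positive upper density, and at least one of them has ``non-thin'' image, then they are twists of one another by a character of finite order unramified outside that finite set. Concretely this yields a Dirichlet character $\chi$ such that $\rho_{g,\ell}\otimes\chi\cong\rho_{f,\ell}$. Because both representations are ramified only at $p$ (and $\ell$), the conductor of $\chi$ divides a power of $p$; comparing determinants, $\chi^2\psi = 1$, so $\chi$ has order dividing $2(p-1)$, and in fact $\chi$ is a character modulo $p$ (its conductor is a power of $p$, but a Dirichlet character of $p$-power conductor whose square has conductor dividing $p$ must itself have conductor dividing $p$, since $(\ZZ/p^k\ZZ)^\times$ has no $2$-torsion beyond that of $(\ZZ/p\ZZ)^\times$ for odd $p$). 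Then $\rho_{g\otimes\chi,\ell}\cong\rho_{f,\ell}$, and taking traces of Frobenius recovers $\lambda_q(g\otimes\chi)=\lambda_q(f)$ for all but finitely many primes $q$.

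The main obstacle is verifying that $\rho_{g,\ell}$ has sufficiently large image to apply Rajan's rigidity theorem — precisely, that it is not ``thin'' or reducible after restriction to any open subgroup. This is where the no-CM hypothesis on $g$ enters decisively: by work of Ribet (and Momose), a newform without complex multiplication has $\lambda$-adic image containing an open subgroup of $\SL_2$ (after adjusting by the determinant), so in particular the image is Zariski-dense in $\GL_2$ and the hypotheses of Rajan's theorem are met. One subtlety to handle carefully is that Rajan's statement is often phrased for representations over the same coefficient field, or requires passing to a compatible system; I would phrase everything in terms of the compatible system attached to $g$ (and the one attached to $f$), so that the conclusion ``twist by $\chi$'' holds simultaneously for all $\ell$, and then deduce the statement about Hecke eigenvalues. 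A second routine point is keeping track of the finitely many ``bad'' primes: the isomorphism of Galois representations gives equality of traces at all primes where both systems are unramified, which is all primes except possibly $p$ and those dividing the conductor of $\chi$, hence all but finitely many — exactly as stated.
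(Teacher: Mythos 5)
Your proposal is correct and takes essentially the same route as the paper: the paper's entire proof is a citation of Corollary 1 of \cite{rajan1998strong} (strong multiplicity one up to twisting for $\ell$-adic representations), which is exactly the rigidity result you invoke after passing to the attached Galois representations and using the no-CM hypothesis to guarantee large image. Your additional argument pinning the conductor of $\chi$ down to $p$ (via ramification only at $p$ and the determinant comparison, modulo a harmless sign-convention slip in ``$\chi^2\psi=1$'' versus $\chi^2=\psi^{\pm1}$) is a detail the paper leaves implicit in the citation.
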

\begin{proof}
 See  \cite{rajan1998strong} Corollary $1$.
\end{proof}

Therefore, all elements of $\A$ are the form $g \otimes \chi$
(where $\chi$ varies over the characters of conductor $p$) or are 
newforms attached to them since it may happen that
$g \otimes \chi \in S_{2}(\Gamma_{0}(p^2), \chi^2)$ is not a newform
(it may fail to be new at $p$).  In that case there is an associated
newform living on $S_{2}(\Gamma_{0}(p), \chi^2)$ that appears in the
linear combination as well. Being new at $p$ can be read from the
type of the local automorphic representation of $g$ at the prime $p$,
 as explained in \cite{Atkin-Li}. We have proved the following Theorem:

%and $g \in S_{2}(\Gamma_{0}(p^2))$, we denote by $g
%\otimes \chi \in S_{2}(\Gamma_{0}(p^2), \chi^2)$ the twist of the $g$
% by $\chi$.
\begin{thm}
  Let $g \in S^{new}_2(\Gamma_0(p^2))$ be a normalized eigenform  with
  eigenvalues $\lambda_{n}$ ($n$ relatively prime to $p$), and suppose that $g$ does not have complex multiplication. Let $g_{\varepsilon} \in
  S_2(\Gamma^{\varepsilon}_{ns}(p))$ be the unique normalized
  eigenform such that $\Hecke^{\varepsilon}_{n} g_{\varepsilon}=\lambda_{n}g_{\varepsilon}$ ($n$ relatively prime to $p$).
  Let $\pi_{p}$ be the local automorphic representation of $g$ at $p$. 
  \begin{itemize}
  \item If $\pi_{p}$ is supercuspidal, then  $g \otimes \chi$ is a newform for $S_2(\Gamma_0(p^2),\chi^2)$ for all characters $\chi$ modulo $p$ and
\[
\tilde{g_{\varepsilon}} = \sum_{\chi} a_{\chi} (g \otimes \chi),
\]
for some  $a_{\chi} \in \CC$, 
where the sum is over all  characters modulo $p$.
\item If $\pi_{p}$ is Steinberg, there exists a newform $h \in
  S_{2}(\Gamma_{0}(p))$ such that $h \otimes \varkappa_p =g$, where $\varkappa_p$ is
  the quadratic character modulo $p$, and
\[
\tilde{g_{\varepsilon}}= \sum_{\chi} a_{\chi} (g \otimes \chi)+ ah,
\]
for some  $a_{\chi},a \in \CC$, 
where the first sum is over all  characters modulo $p$.
\item If $\pi_{p}$ is a ramified Principal Series, there exists a
  non-quadratic character $\theta_{p}$ modulo $p$ and newforms $h \in
  S_{2}(\Gamma_{0}(p), \overline{\theta_{p}}^2)$, $\bar{h} \in
  S_{2}(\Gamma_{0}(p), \theta_{p}^2)$ such that $h \otimes \theta_{p}=g=\bar{h}
  \otimes \bar{\theta_{p}}$. Then
\[
\tilde{g_{\varepsilon}}= \sum_{\chi} a_{\chi} (g \otimes \chi)+ a_{1}h+ a_{2} \bar{h} ,
\]
for some  $a_{\chi},a_{1},a_{2} \in \CC$, 
where the first sum is over all  characters modulo $p$. 

\end{itemize}

\label{thm:combination}
\end{thm}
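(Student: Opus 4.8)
The plan is to combine three inputs: first, the reduction (already carried out in the excerpt) showing that $\tilde{g_{\varepsilon}} = g_\varepsilon|_2[\alpha_p]$ lies in the span of $\A$, the set of eigenforms in $S_2(\widetilde{\Gamma(p)})$ sharing $g$'s eigenvalues for all $n \equiv 1 \pmod p$; second, Theorem~\ref{thm:rajan} (Rajan's strong multiplicity one), which forces every eigenform in $\A$ to agree at almost all primes with some twist $g \otimes \chi$, $\chi$ a Dirichlet character modulo $p$; and third, the Atkin--Lehner--Li theory of local representations, which tells us exactly which of these twists $g \otimes \chi$ are new at $p$ and, when they are not, identifies the level-$p$ newform that must be added to the span. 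The main body of the argument is therefore a case analysis on the local automorphic representation $\pi_p$ of $g$.

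First I would record that, since $g$ has no complex multiplication, two eigenforms in $S_2(\widetilde{\Gamma(p)})$ with the same $T_n$-eigenvalues for $n \equiv 1 \pmod p$ in fact agree at a set of primes of positive density (the primes $\equiv 1 \pmod p$), so Theorem~\ref{thm:rajan} applies and each such eigenform equals $g \otimes \chi$ at almost all primes for some $\chi$ modulo $p$; by multiplicity one for the relevant spaces this pins down the eigenform as the newform attached to $g \otimes \chi$. Thus $\A$ is contained in the set of newforms associated to the family $\{g \otimes \chi\}_{\chi \bmod p}$. It remains to determine, for each $\chi$, whether $g \otimes \chi \in S_2(\Gamma_0(p^2), \chi^2)$ is itself a newform or whether its associated newform has level $p$; this is governed by the conductor of the twisted representation $\pi_p \otimes \chi_p$.

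Next comes the case analysis. If $\pi_p$ is supercuspidal, then $\pi_p \otimes \chi_p$ is again supercuspidal of conductor $p^2$ for every $\chi$ modulo $p$ (twisting a supercuspidal by a character of conductor dividing $p$ cannot lower the conductor below $p^2$ in this minimal situation), so every $g \otimes \chi$ is a $p^2$-newform and $\A$ is exactly $\{g\otimes\chi\}$, giving the first formula. If $\pi_p$ is Steinberg (special), then $g$ arises as $h \otimes \varkappa_p$ for a level-$p$ newform $h$ with $\varkappa_p$ the quadratic character, so among the twists $g \otimes \chi$ the one with $\chi = \varkappa_p$ recovers (the newform associated to) $h$, which has level $p$; the remaining twists are again Steinberg-type of conductor $p^2$, which produces the extra summand $ah$. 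If $\pi_p$ is a ramified principal series $\pi(\theta_p, \theta_p^{-1}|\cdot|\text{-type})$ with $\theta_p$ non-quadratic modulo $p$, then two twists ($\chi = \bar\theta_p$ and $\chi = \theta_p$, by the symmetry $g = h\otimes\theta_p = \bar h \otimes \bar\theta_p$) drop the conductor to $p$, yielding the two extra newforms $h, \bar h$ and the third formula. In each case one then expands $\tilde{g_\varepsilon}$ in the eigenbasis of $\A$ — which by the above description is precisely the indicated finite list of newforms — with undetermined complex coefficients, which is exactly the asserted conclusion.

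The step I expect to be the main obstacle is the precise bookkeeping of local conductors under twisting: verifying that for a supercuspidal $\pi_p$ no twist by a mod-$p$ character lowers the conductor, and conversely that in the Steinberg and ramified-principal-series cases exactly the claimed twists do lower it (and to level exactly $p$, not lower), together with matching this representation-theoretic data against the classical newform theory of Atkin--Lehner--Li so that ``new at $p$'' is read off correctly. This is where one must invoke \cite{Atkin-Li} carefully; everything else (the density statement, the appeal to Rajan, multiplicity one, and the final linear-algebra expansion) is comparatively formal.
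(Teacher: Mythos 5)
Your proposal is correct and follows essentially the same route as the paper: reduce to the span of $\A$ via the identity $\Hecke^{\varepsilon}_n = T_n$ for $n \equiv 1 \pmod p$, invoke Rajan's strong multiplicity one (Theorem~\ref{thm:rajan}) to identify every element of $\A$ with a twist $g \otimes \chi$ or its associated newform, and then sort out which twists fail to be new at $p$ by the Atkin--Lehner--Li analysis of the local type $\pi_p$. The only slight imprecision is attributing the positive-density input to the no-CM hypothesis (that density is just Dirichlet; no-CM is what Rajan's theorem itself requires), but this does not affect the argument.
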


\begin{rem}
  If $g_{\varepsilon} \in S_2(\Gamma^{\varepsilon +}_{ns}(p))$ is an eigenform, then
  $T_{n}f =\lambda_{n} f$ for $n \equiv -1 \bmod{p}$ as well. Therefore
  all the non-zero coefficients in the linear combination of
  Theorem \ref{thm:combination} are those corresponding to even
  characters.

Similarly, if $g_{\varepsilon} \in S_2(\Gamma^{\varepsilon -}_{ns}(p))$ (i.e. any matrix in
  the normalizer but not in the Cartan itself acts as $-1$), then the non-zero coefficients in the linear
  combination of Theorem~\ref{thm:combination} are those corresponding
  to odd characters.
 \label{rem: evenandodd}
\end{rem}

\subsection{ Fourier expansions}
In order to compute the Fourier
expansion of the normalized newform  $g_{\varepsilon}$
we first need to understand the action of the Galois group
$\Gal(\CC/\QQ)$ on modular forms.
For $a \in {\QQ}^2$ and $z \in \mathcal{H}$ define
\[ 
f_{a}(z)=\frac{g_{2}(z,1)g_{3}(z,1)}{\Delta(z,1)} \wp(a\left( \begin{smallmatrix}
      z  \\
      1  \\ \end{smallmatrix} \right); z,1 ),
\]
where $\wp( - ; \omega_{1},\omega_{2})$ is the classical Weierstrass
function associated to the lattice $L=\left\langle \omega_{1},
  \omega_{2}\right\rangle$; $g_{2}(L)=60 G_{4}(L)$, and  $g_{3}(L)=140
G_{6}(L)$ correspond to the lattice functions $G_{2n}(L)= \sum_{w \in
  L} \frac{1}{w^{2n}}$ (see Section $6.1$ of \cite{Shimura} for
example). These functions satisfy $f_{a}(\gamma(z))=f_{a\gamma}(z)$
for every $\gamma \in \SL_{2}(\ZZ) $. Let $\mathcal{R}_p$ be the field
of modular functions of level $p$, which by Proposition $6.1$ of
\cite{Shimura} is
\[   
\mathcal{R}_{p}=\CC(j,f_{a}  \mid  a \in (p^{-1} {\ZZ}^2)/{\ZZ^2},  a \notin {\ZZ}^{2}   ). 
\]
Let $\xi_{p}$ be  a fixed p-th
root of unity and let $\sigma \in \Gal(\CC/\QQ(\xi_{p}))$. Since the functions $j,f_{a}$
have Fourier expansions belonging to $\QQ(\xi_{p})$, if
$f= cj+ \sum_{a}c_{a} f_{a}$, then 
$\sigma(f)= \sigma(c)j+ \sum_{a}\sigma(c_{a}) f_{a}.$
If we choose representatives $\left\{\beta_{k} \right\}$ for $\pm \Gamma(p) \backslash
\Gamma^{\varepsilon}_{ns}(p)$, 
%such that $b \equiv 0 \pmod{\ell}$ (the
%number $\ell$ is auxiliary and we will choose it in different ways
% depending on our purposes).  
the field of
modular functions for the non-split Cartan is the subfield of $\mathcal{R}_p$ given by
\[ 
\mathcal{R}^{\varepsilon}_{ns}(p)=\CC(j,\sum_{i}f_{a\beta_{i}}).
\] 
Clearly it does not depend on the representatives chosen. In order to
understand the action of $\Gal(\CC/\QQ)$ on modular forms for the
Cartan non-split group, it is enough to understand the effect of
$\Gal(\QQ(\xi_{p}) \slash \QQ)$ on them. For every $n$ relatively prime to $p$ consider the automorphism $\sigma_n$ given by
$\sigma_n(\xi_{p})={\xi_{p}}^{n^{-1}}$. This Galois automorphism depends only on the class of $n$ modulo $p$.
By theorem $6.6$ of \cite{Shimura} and
Theorem $3$ (Chapter $6$, section $3$) of \cite{Lang} this
automorphism acting on the meromorphic modular functions $f_{a}$ is
given by $f_{a \alpha_{{n}^{-1}}}$.

\begin{prop}  \label{prop:galoismeromprphic}
  Let $f$ be a meromorphic form of weight $0$ for
  $\Gamma_{ns}^{\varepsilon}(p)$. Let $\sigma \in \Gal(\CC/\QQ)$ satisfying $\sigma \mid_{\QQ(\xi_{p})}= \sigma_{n}$. Then
  $\sigma(f)$ is a meromorphic form of weight $0$ for
  $\Gamma_{ns}^{\varepsilon n^2}(p)$.
\end{prop}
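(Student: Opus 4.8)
The plan is to reduce the statement to the action of $\Gal(\QQ(\xi_p)/\QQ)$ on the generators $f_a$ of the field $\mathcal{R}_p$, using the explicit description recalled just above from \cite{Shimura} Theorem~6.6 and \cite{Lang}. First I would write $f$, a meromorphic weight $0$ form for $\Gamma_{ns}^\varepsilon(p)$, as an element of $\mathcal{R}_{ns}^\varepsilon(p) = \CC\bigl(j, \sum_i f_{a\beta_i}\bigr)$, where $\{\beta_k\}$ runs over representatives of $\pm\Gamma(p)\backslash \Gamma_{ns}^\varepsilon(p)$; the key invariance property is that $\sum_i f_{a\beta_i}$ is a modular function for $\Gamma_{ns}^\varepsilon(p)$ precisely because right multiplication by an element of $\Gamma_{ns}^\varepsilon(p)$ permutes the cosets $\pm\Gamma(p)\beta_i$, together with the identity $f_a(\gamma z) = f_{a\gamma}(z)$.

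The heart of the argument is then: apply $\sigma$ with $\sigma|_{\QQ(\xi_p)} = \sigma_n$ to such a generator. Since the $f_a$ have Fourier coefficients in $\QQ(\xi_p)$ and $j$ has rational coefficients, $\sigma$ acts on a $\CC$-linear (or rational) combination by acting on the scalar coefficients and simultaneously sending $f_a \mapsto f_{a\alpha_{n^{-1}}}$, where $\alpha_m = \left(\begin{smallmatrix} m & 0 \\ 0 & 1\end{smallmatrix}\right)$. Hence $\sigma\bigl(\sum_i f_{a\beta_i}\bigr) = \sum_i f_{a\beta_i\alpha_{n^{-1}}}$. Now I would check that $\sum_i f_{c\beta_i\alpha_{n^{-1}}}$, viewed as a function of $c$, is invariant under right multiplication by $\Gamma_{ns}^{\varepsilon n^2}(p)$: given $\gamma' \in \Gamma_{ns}^{\varepsilon n^2}(p)$, one has $\alpha_{n^{-1}}\gamma'\alpha_{n^{-1}}^{-1}$ lying (modulo $p$, and after adjusting by an element of $\Gamma(p)$) in $\Gamma_{ns}^\varepsilon(p)$ — this is exactly the conjugation relation $\left(\begin{smallmatrix} 1 & 0 \\ 0 & 1/n\end{smallmatrix}\right)^{-1} C_{ns}^\varepsilon(p)\left(\begin{smallmatrix} 1 & 0 \\ 0 & 1/n\end{smallmatrix}\right) = C_{ns}^{\varepsilon n^2}(p)$ used in Lemma~\ref{lemma:matA}, transported from $\alpha_p$-normalization to the relevant one. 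So right multiplication by $\gamma'$ permutes the cosets $\{\pm\Gamma(p)\beta_i\alpha_{n^{-1}}\}$, leaving the sum unchanged. Therefore $\sigma\bigl(\sum_i f_{a\beta_i}\bigr) \in \mathcal{R}_{ns}^{\varepsilon n^2}(p)$, and since $\sigma$ fixes $j$ up to the rational action it also preserves $\CC$, one concludes $\sigma(f) \in \mathcal{R}_{ns}^{\varepsilon n^2}(p)$, i.e.\ $\sigma(f)$ is a meromorphic weight $0$ form for $\Gamma_{ns}^{\varepsilon n^2}(p)$.

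The main obstacle I anticipate is the bookkeeping in the second step: matching the coset representatives $\beta_i$ for $\pm\Gamma(p)\backslash\Gamma_{ns}^\varepsilon(p)$ with the translated representatives $\beta_i\alpha_{n^{-1}}$ for the target group, and verifying that $\alpha_{n^{-1}}$-conjugation sends $\Gamma_{ns}^{\varepsilon n^2}(p)$ into $\Gamma_{ns}^\varepsilon(p)$ \emph{on the nose at the matrix level} rather than merely modulo $p$ — one has to absorb the discrepancy into $\Gamma(p)$, exploiting that $f_a$ depends only on $a \bmod \ZZ^2$ and that $f_a(\gamma z) = f_{a\gamma}(z)$ for $\gamma \in \SL_2(\ZZ)$, so that any $\Gamma(p)$-ambiguity in the representative is invisible. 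Once that compatibility is pinned down the rest is a direct application of the Shimura--Lang formula for the Galois action on $f_a$, and the weight $0$ meromorphic case stated here is genuinely only about the field of modular functions, so no further analytic input is needed.
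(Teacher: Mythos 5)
Your proposal follows essentially the same route as the paper: expand $f$ in the generators $j$ and $\sum_i f_{a\beta_i}$, apply the Shimura--Lang description of $\sigma_n$ on the $f_a$, and observe that conjugation by the relevant diagonal matrix carries $\pm\Gamma(p)\backslash\Gamma_{ns}^{\varepsilon}(p)$ to $\pm\Gamma(p)\backslash\Gamma_{ns}^{\varepsilon n^2}(p)$ (the paper handles the integrality bookkeeping you anticipate by choosing the $\beta_k$ with $(1,2)$ entry divisible by $n$ up front). The only point to pin down is your explicit convention $\alpha_m=\left(\begin{smallmatrix} m & 0\\ 0 & 1\end{smallmatrix}\right)$: with that choice the conjugation sends $C_{ns}^{\varepsilon}(p)$ to $C_{ns}^{\varepsilon n^{-2}}(p)$ rather than $C_{ns}^{\varepsilon n^2}(p)$, so to land in the group stated one should use $\left(\begin{smallmatrix} 1 & 0\\ 0 & n^{-1}\end{smallmatrix}\right)$, consistent with the identity $\left(\begin{smallmatrix} 1 & 0\\ 0 & 1/n\end{smallmatrix}\right)^{-1}C_{ns}^{\varepsilon}(p)\left(\begin{smallmatrix} 1 & 0\\ 0 & 1/n\end{smallmatrix}\right)=C_{ns}^{\varepsilon n^2}(p)$ from Lemma~\ref{lemma:matA} that you yourself invoke.
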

\begin{proof}
Choose representatives $\left\{\beta_{k} \right\}$ for $\pm \Gamma(p) \backslash
\Gamma^{\varepsilon}_{ns}(p)$ 
such that the $(1,2)$ entries of the matrices are divisible by $n$. 
Since $f$ is a meromorphic form of weight $0$, we have
 \[ f=\lambda j+ \sum_{a} \lambda_a \sum_{i} f_{a \beta_{i}}.  \]
Then, 
\[\sigma(f)=\sigma(\lambda)j+ \sum_{a} \sigma(\lambda_a)\sum_{i} f_{a \beta_i \alpha_{n^{-1}}}.\] 
The action on each $f_{a}$ can be written
as $f_{a \beta_{i} \alpha_{n^{-1}}  } = f_{a \alpha_{n^{-1}}(\alpha_{n} \beta_{i} \alpha_{n^{-1}})}$. 
  Since $\left\{\alpha_{n} \beta_{k} \alpha_{n^{-1}} \right\}_{k}$ are representatives for $\pm \Gamma(p) \backslash   \Gamma_{ns}^{\varepsilon
  n^2}(p)$ we see that $\sigma(f)$ is an automorphic form for the required group.
\end{proof}

\begin{rem} \label{rem:weight0} Although the last result is only
  stated for weight $0$ forms, it also applies to modular forms of
  other weights by dividing the form by an appropriate Eisenstein series
  with rational Fourier coefficients.
\end{rem}

\begin{prop} Let $f$ be a meromorphic modular function for
  $\Gamma^{\varepsilon}_{ns}(p)$ and let  $\sigma \in \Gal(\CC/\QQ)$ satisfying $\sigma \mid_{\QQ(\xi_{p})}= \sigma_{n}$.
  Then $\sigma(\upsilon^{\varepsilon}_{m}(f))=\upsilon^{\varepsilon n^2}_{m}(\sigma(f))$.
\end{prop}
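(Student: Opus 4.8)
The plan is to unwind both sides into combinations of the basic functions $f_a$, exactly in the spirit of the proof of Proposition~\ref{prop:galoismeromprphic}, thereby reducing the asserted identity to a single congruence of $2\times 2$ matrices modulo $p$, which I then verify by exploiting the non‑uniqueness in the definition of $A^{\varepsilon n^2}_m$. Recall first that $\upsilon^{\varepsilon}_m$ is the slash action of a matrix $A^{\varepsilon}_m\in\SL_2(\ZZ)$ with $\bar A^{\varepsilon}_m=\bar B\left(\begin{smallmatrix}1&0\\0&1/m\end{smallmatrix}\right)$, where $B\in\Delta^{\varepsilon}_{ns}(p)$ has $\det B\equiv m\pmod p$; on meromorphic functions of weight $0$ this slash is just precomposition $h\mapsto h\circ A^{\varepsilon}_m$. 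Combining Lemma~\ref{lemma:matA} with Proposition~\ref{prop:galoismeromprphic}, applied in the two possible orders, $\sigma(\upsilon^{\varepsilon}_m(f))$ and $\upsilon^{\varepsilon n^2}_m(\sigma(f))$ are both meromorphic functions for $\Gamma^{\varepsilon m^2 n^2}_{ns}(p)$, so the comparison makes sense.

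First I would write $f=\lambda j+\sum_a\lambda_a\sum_i f_{a\beta_i}$, with $\{\beta_i\}$ representatives for $\pm\Gamma(p)\backslash\Gamma^{\varepsilon}_{ns}(p)$, as in the proof of Proposition~\ref{prop:galoismeromprphic}. Using $j(\gamma z)=j(z)$ and $f_c(\gamma z)=f_{c\gamma}(z)$ for $\gamma\in\SL_2(\ZZ)$, together with the facts that $\sigma$ fixes $j$, acts on the scalar coefficients, and sends each $f_c$ to $f_{c\alpha_{n^{-1}}}$, one obtains
\begin{gather*}
\sigma\big(\upsilon^{\varepsilon}_m(f)\big)=\sigma(\lambda)j+\sum_a\sigma(\lambda_a)\sum_i f_{a\beta_i A^{\varepsilon}_m\alpha_{n^{-1}}},\\
\upsilon^{\varepsilon n^2}_m\big(\sigma(f)\big)=\sigma(\lambda)j+\sum_a\sigma(\lambda_a)\sum_i f_{a\beta_i\alpha_{n^{-1}}A^{\varepsilon n^2}_m}.
\end{gather*}
Since $f_c$ depends only on the class of $c$ modulo $\ZZ^2$, the two right‑hand sides coincide as soon as the matrices $A^{\varepsilon}_m\alpha_{n^{-1}}$ and $\alpha_{n^{-1}}A^{\varepsilon n^2}_m$ are congruent modulo $p$.

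To arrange that congruence I would use the freedom in the choice of $A^{\varepsilon n^2}_m$. The reduction $\bar\alpha_{n^{-1}}$ is a diagonal matrix, and (as already used in Proposition~\ref{prop:galoismeromprphic}) conjugation by it carries $C^{\varepsilon}_{ns}(p)$ onto $C^{\varepsilon n^2}_{ns}(p)$; hence $\bar\alpha_{n^{-1}}^{-1}\bar B\,\bar\alpha_{n^{-1}}$ lies in $C^{\varepsilon n^2}_{ns}(p)$ and has determinant $\equiv m\pmod p$, so it is the reduction of an admissible matrix $B'\in\Delta^{\varepsilon n^2}_{ns}(p)$ defining $\upsilon^{\varepsilon n^2}_m$ (Lemma~\ref{lemma:matA} guarantees the operator is independent of this choice). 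Taking $\bar A^{\varepsilon n^2}_m=\bar B'\left(\begin{smallmatrix}1&0\\0&1/m\end{smallmatrix}\right)$ and using that $\bar\alpha_{n^{-1}}$ commutes with the diagonal matrix $\left(\begin{smallmatrix}1&0\\0&1/m\end{smallmatrix}\right)$, one computes $\bar\alpha_{n^{-1}}\bar A^{\varepsilon n^2}_m=\bar B\,\bar\alpha_{n^{-1}}\left(\begin{smallmatrix}1&0\\0&1/m\end{smallmatrix}\right)=\bar A^{\varepsilon}_m\bar\alpha_{n^{-1}}$ (using $\bar B'=\bar\alpha_{n^{-1}}^{-1}\bar B\,\bar\alpha_{n^{-1}}$), which is the required congruence.

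The step I expect to be the main obstacle is this last one: one must recognize that the ambiguity in the definition of $A^{\varepsilon n^2}_m$ provided by Lemma~\ref{lemma:matA} is exactly what makes the matrix congruence hold on the nose, choose the defining matrix for the target to be the $\alpha_{n^{-1}}$‑conjugate of the one for the source, and check that it is admissible (right Cartan ring, right determinant modulo $p$). Everything else — that diagonal matrices commute, that $f_c$ depends only on $c$ modulo $\ZZ^2$, and the bookkeeping of the Galois action on the $f_c$ — is already contained in, or is an immediate variant of, the proof of Proposition~\ref{prop:galoismeromprphic}.
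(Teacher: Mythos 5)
Your proof is correct and follows essentially the same route as the paper's: both reduce the identity to the congruence $A^{\varepsilon}_m\,\alpha_{n^{-1}}\equiv\alpha_{n^{-1}}A^{\varepsilon n^2}_m\pmod p$ and use the freedom in the defining matrices (the paper picks $A^{\varepsilon}_m$ with $(1,2)$ entry divisible by $n$ so its diagonal conjugate is an admissible integral choice of $A^{\varepsilon n^2}_m$; you equivalently pick $A^{\varepsilon n^2}_m$ to reduce to the conjugate of $\bar B$). Your write-up just makes explicit the bookkeeping on the indices of the $f_a$ that the paper leaves implicit.
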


\begin{proof}
  Choose $A^{\varepsilon}_{m}$ in such a way that its $(1,2)$ entry
  is divisible by $n$.  It is easy to see that
  $\left(\begin{smallmatrix} 1 & 0 \\ 0 & n\end{smallmatrix} \right)
  A^{\varepsilon}_{m} \left(\begin{smallmatrix} 1 & 0 \\ 0 &
      1/n \end{smallmatrix} \right)$, which belongs to $\SL_{2}(\ZZ)$
  by our choice of $A^{\varepsilon}_{m}$, gives the same action on
  the $f_{a}$ as $A^{\varepsilon n^2}_{m}$ (since both matrices are
  easily seen to be equivalent modulo $p$). This proves the result on
  weight zero forms. For general weights, the same argument as in Remark
  \ref{rem:weight0} applies.
\end{proof}

\begin{coro}
  With the previous notation, $\Hecke^{\varepsilon
    n^2}_{m}(\sigma(f))=\sigma(\Hecke^{\varepsilon}_{m}(f))$.
\label{coro:sigmaaction}
\end{coro}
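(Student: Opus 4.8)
The plan is to reduce the statement about $\Hecke^{\varepsilon}_m$ to the two previous results — the decomposition $\Hecke^{\varepsilon}_m = T_m \circ \upsilon^{\varepsilon}_m$ from Proposition \ref{prop:heckedecomposition}, and the Galois-equivariance of $\upsilon^{\varepsilon}_m$ just proved — together with the fact that the classical Hecke operators $T_m$ are already defined over $\QQ$. First I would write, for a meromorphic form $f$ for $\Gamma^{\varepsilon}_{ns}(p)$,
\[
\Hecke^{\varepsilon}_m(f) = T_m\bigl(\upsilon^{\varepsilon}_m(f)\bigr),
\]
where by Lemma \ref{lemma:matA} and Proposition \ref{prop:heckeaction} the form $\upsilon^{\varepsilon}_m(f)$ lives on $S_2(\Gamma^{\varepsilon m^2}_{ns}(p))$ and $T_m$ maps it into $S_2(\Gamma^{\varepsilon}_{ns}(p))$. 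The key point I need is that $T_m$ commutes with the Galois action: since $T_m$ is a double coset operator whose representatives can be chosen to lie in $\SL_2(\ZZ)$-coset representatives with rational (indeed integral) entries — the standard representatives $\left(\begin{smallmatrix} a & b \\ 0 & d\end{smallmatrix}\right)$ with $ad = m$ — and these are fixed by every $\sigma \in \Gal(\CC/\QQ)$ since they have no dependence on $\xi_p$, applying $\sigma$ to $T_m(h) = \sum_i h|_2[\gamma_i]$ term by term gives $T_m(\sigma(h))$ for any meromorphic form $h$ (here one uses that the $f_a$ have Fourier coefficients in $\QQ(\xi_p)$, so $\sigma$ acts coefficientwise as in the discussion preceding Proposition \ref{prop:galoismeromprphic}).

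With that in hand the computation is a two-line chain:
\[
\sigma\bigl(\Hecke^{\varepsilon}_m(f)\bigr) = \sigma\bigl(T_m(\upsilon^{\varepsilon}_m(f))\bigr) = T_m\bigl(\sigma(\upsilon^{\varepsilon}_m(f))\bigr) = T_m\bigl(\upsilon^{\varepsilon n^2}_m(\sigma(f))\bigr) = \Hecke^{\varepsilon n^2}_m(\sigma(f)),
\]
where the second equality is the $\Gal(\CC/\QQ)$-equivariance of $T_m$, the third is the previous Proposition, and the fourth is Proposition \ref{prop:heckedecomposition} applied on the curve $X^{\varepsilon n^2}_{ns}(p)$ — note that $\sigma(f)$ is a form for $\Gamma^{\varepsilon n^2}_{ns}(p)$ by Proposition \ref{prop:galoismeromprphic}, so this last identity is exactly the analogue of Proposition \ref{prop:heckedecomposition} for the Cartan group with parameter $\varepsilon n^2$. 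One should observe that $\upsilon^{\varepsilon n^2}_m$ sends $S_2(\Gamma^{\varepsilon n^2}_{ns}(p))$ to $S_2(\Gamma^{\varepsilon n^2 m^2}_{ns}(p))$ and then $T_m$ brings us back to $S_2(\Gamma^{\varepsilon n^2}_{ns}(p))$, so the target spaces on both sides of the displayed identity agree, as they must.

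The main obstacle — though a mild one — is making precise that $T_m$, viewed as acting on forms for the various Cartan groups, genuinely commutes with $\sigma$; this rests on choosing the double-coset representatives with rational entries (which is possible by \cite{Shimura} Lemma $3.29$, exactly as in the proof of Theorem \ref{prop:heckeoperatorscompatibility}) and on the fact established before Proposition \ref{prop:galoismeromprphic} that $\sigma$ acts on the $f_a$ through $\Gal(\QQ(\xi_p)/\QQ)$ by permuting the index $a$ via $\alpha_{n^{-1}}$, which visibly commutes with right-multiplication by an integer matrix of determinant $m$ coprime to $p$. For general weight, one divides by a rationally-defined Eisenstein series as in Remark \ref{rem:weight0} to reduce to the weight-zero case. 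I would keep the written proof to essentially the displayed four-term chain, citing Propositions \ref{prop:heckedecomposition} and \ref{prop:galoismeromprphic} and the preceding Proposition, and remarking that $T_m$ is Galois-equivariant by the rationality of its coset representatives.
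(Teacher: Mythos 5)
Your proof is correct and follows essentially the same route as the paper: decompose $\Hecke^{\varepsilon}_m = T_m \circ \upsilon^{\varepsilon}_m$ via Proposition~\ref{prop:heckedecomposition}, apply the Galois-equivariance of $\upsilon^{\varepsilon}_m$ from the preceding proposition, and use that $\sigma$ commutes with $T_m$. The paper justifies this last commutation by looking at the action on $q$-expansions rather than by the rationality of coset representatives, but these are two phrasings of the same fact.
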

\begin{proof}
  This follows from the previous Proposition and the fact that $\sigma$
  commutes with $T_m$ (this is easily obtained by looking at
  the action on $q$-expansions).
\end{proof}

\begin{coro}
 Suppose that $g \in S_2(\Gamma_0(p^2))$ is a newform  with rational eigenvalues. Then $\sigma(g_{\varepsilon}) \in S_2(\Gamma_{ns}^{\varepsilon n^2}(p))  $ is a normalized newform
 with the same eigenvalues as $g_{\varepsilon}$, i.e. if
$\Hecke^{\varepsilon}_{m}(g_{\varepsilon})= \lambda_{m} g_{\varepsilon}$ with $\lambda_{m} \in \QQ$,
 then
$\Hecke^{\varepsilon n^{2}}_{m}(\sigma(g_{\varepsilon}  ))  = \lambda_{m} \sigma(g_{\varepsilon})$.
\end{coro}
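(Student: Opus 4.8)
The plan is to deduce this corollary almost immediately from Corollary~\ref{coro:sigmaaction} together with the previously established relation $\Hecke^{\varepsilon}_m = T_m \circ \upsilon^{\varepsilon}_m$ and the multiplicity one statement recorded after Theorem~\ref{thm:chen}. First I would observe that, by hypothesis, $g_{\varepsilon} \in S_2(\Gamma^{\varepsilon}_{ns}(p))$ is the unique normalized eigenform with $\Hecke^{\varepsilon}_m(g_{\varepsilon}) = \lambda_m g_{\varepsilon}$ for all $m$ prime to $p$, where the $\lambda_m \in \QQ$ are the eigenvalues of the newform $g \in S_2^{new}(\Gamma_0(p^2))$. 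Since $g$ has rational coefficients, $\sigma$ fixes each $\lambda_m$. Applying $\sigma$ to both sides of $\Hecke^{\varepsilon}_m(g_{\varepsilon}) = \lambda_m g_{\varepsilon}$ and invoking Corollary~\ref{coro:sigmaaction}, which gives $\sigma(\Hecke^{\varepsilon}_m(g_{\varepsilon})) = \Hecke^{\varepsilon n^2}_m(\sigma(g_{\varepsilon}))$, we get
\[
\Hecke^{\varepsilon n^2}_m(\sigma(g_{\varepsilon})) = \sigma(\lambda_m g_{\varepsilon}) = \lambda_m \sigma(g_{\varepsilon}),
\]
so $\sigma(g_{\varepsilon})$ is an eigenform on $S_2(\Gamma^{\varepsilon n^2}_{ns}(p))$ with the same eigenvalues.

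Next I would argue that $\sigma(g_{\varepsilon})$ is in fact a \emph{cusp} form (and not merely a meromorphic modular function): $g_{\varepsilon}$ is holomorphic with vanishing constant term at every cusp, and since $\sigma$ acts on the Fourier expansions coefficientwise (it fixes $\QQ$ and permutes the $p$-th roots of unity appearing in the $q$-expansions at the various cusps), holomorphy and the cusp condition are preserved; this places $\sigma(g_{\varepsilon})$ genuinely in $S_2(\Gamma^{\varepsilon n^2}_{ns}(p))$. By Chen-Edixhoven (Theorem~\ref{thm:chen}) applied with $\varepsilon n^2$ in place of $\varepsilon$ — a non-square modulo $p$ yields the same Cartan non-split curve up to isomorphism, and the relevant Hecke algebra has multiplicity one for the system $(\lambda_m)$ — there is a unique normalized eigenform in $S_2(\Gamma^{\varepsilon n^2}_{ns}(p))$ with these eigenvalues, namely $g_{\varepsilon n^2}$ (in the notation following Theorem~\ref{thm:chen}). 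Hence $\sigma(g_{\varepsilon})$ is a scalar multiple of $g_{\varepsilon n^2}$, in particular a normalized newform up to that scalar.

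Finally, to pin down the normalization I would note that the proposed normalization of Cartan forms is well-defined only up to $\pm 1$, so the statement should be read accordingly; alternatively one checks that $\sigma$ preserves whatever normalizing condition has been imposed (e.g.\ a prescribed leading Fourier coefficient at a chosen cusp lying in $\QQ$), which again follows from coefficientwise action of $\sigma$. The only genuinely delicate point is the applicability of multiplicity one in $S_2(\Gamma^{\varepsilon n^2}_{ns}(p))$: this is exactly what is furnished by combining Theorem~\ref{thm:chen} with multiplicity one for $S_2(\Gamma_0(p^2))$, as remarked in the text right after that theorem, so no new input is needed. I expect the writeup to be short, with the main (minor) obstacle being bookkeeping about which Cartan group one lands in and the $\pm 1$ ambiguity in the normalization.
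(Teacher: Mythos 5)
Your proposal is correct and matches the paper's (implicit) argument: the paper states this corollary without proof, as it follows immediately from Corollary~\ref{coro:sigmaaction} by applying $\sigma$ to $\Hecke^{\varepsilon}_{m}(g_{\varepsilon})=\lambda_{m}g_{\varepsilon}$ and using that the $\lambda_m$ are rational, hence $\sigma$-fixed — exactly your first paragraph. The additional checks on cuspidality, multiplicity one, and the $\pm1$ normalization are harmless bookkeeping and do not change the route.
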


\begin{coro}
 With the previous notations, if $m$ is relatively prime to $p$ and satisfies $mn \equiv 1 \bmod{p}$ then there exists $c_{m} \in \CC$ such that $T_{m} g_{\varepsilon} = c_{m}
   \sigma(g_{\varepsilon})$.
 \label{coro:sigma-hecke}
 \end{coro}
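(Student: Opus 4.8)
The plan is to combine Proposition~\ref{prop:heckedecomposition}, which gives the factorization $\Hecke^{\varepsilon}_m = T_m \circ \upsilon^{\varepsilon}_m$ on $S_2(\Gamma^{\varepsilon}_{ns}(p))$, with the Galois-equivariance of the operator $\upsilon^{\varepsilon}_m$ established in the preceding Corollary. First I would apply $\Hecke^{\varepsilon}_m$ to $g_{\varepsilon}$: since $g$ has rational eigenvalues, $\Hecke^{\varepsilon}_m g_{\varepsilon} = \lambda_m g_{\varepsilon}$ with $\lambda_m \in \QQ$, and by Proposition~\ref{prop:heckedecomposition} this reads $T_m(\upsilon^{\varepsilon}_m(g_{\varepsilon})) = \lambda_m g_{\varepsilon}$. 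The key observation is that $\upsilon^{\varepsilon}_m(g_{\varepsilon})$ lives in $S_2(\Gamma^{\varepsilon m^2}_{ns}(p))$ by Lemma~\ref{lemma:matA}, and since $mn \equiv 1 \bmod p$ we have $m^2 \equiv n^{-2} \bmod p$, so $\varepsilon m^2$ and $\varepsilon n^{-2}$ define the same Cartan group; moreover $\varepsilon n^{-2}$ and $\varepsilon n^2$ agree up to the square $n^4$, hence also define the same group. Thus $\upsilon^{\varepsilon}_m(g_{\varepsilon}) \in S_2(\Gamma^{\varepsilon n^2}_{ns}(p))$, the same space that contains $\sigma(g_{\varepsilon})$ by Proposition~\ref{prop:galoismeromprphic}.

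Next I would identify $\upsilon^{\varepsilon}_m(g_{\varepsilon})$ with a multiple of $\sigma(g_{\varepsilon})$ using multiplicity one. By the previous Corollary, $\sigma(g_{\varepsilon})$ is a normalized eigenform in $S_2(\Gamma^{\varepsilon n^2}_{ns}(p))$ with the same eigenvalues $\lambda_k$ (for $k$ prime to $p$) as $g_{\varepsilon}$. One checks that $\upsilon^{\varepsilon}_m(g_{\varepsilon})$ is also an eigenform in that space for all $\Hecke^{\varepsilon n^2}_k$ with the same eigenvalues: indeed, by Proposition~\ref{prop:heckeaction} the operator $\upsilon^{\varepsilon}_m$ is a morphism of Hecke modules, so it intertwines the $\Hecke_k$-action, sending a $\lambda_k$-eigenvector to a $\lambda_k$-eigenvector. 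The multiplicity one statement recorded after Theorem~\ref{thm:chen} (Chen–Edixhoven plus classical multiplicity one) then forces $\upsilon^{\varepsilon}_m(g_{\varepsilon}) = c_m\,\sigma(g_{\varepsilon})$ for some scalar $c_m \in \CC$.

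Finally, substituting back into $T_m(\upsilon^{\varepsilon}_m(g_{\varepsilon})) = \lambda_m g_{\varepsilon}$ would a priori only give information about $T_m \sigma(g_{\varepsilon})$; but in fact the cleanest route is to read the identity directly. From $\Hecke^{\varepsilon}_m = T_m \circ \upsilon^{\varepsilon}_m$ and $\upsilon^{\varepsilon}_m(g_{\varepsilon}) = c_m \sigma(g_{\varepsilon})$ we get $\lambda_m g_{\varepsilon} = c_m\, T_m(\sigma(g_{\varepsilon}))$, which is not quite the claimed form. Instead I would argue symmetrically: apply $\upsilon^{\varepsilon n^2}_{n}$ (or use that $\upsilon^{\varepsilon}_m$ is invertible with inverse in the Hecke-module sense, Lemma~\ref{lemma:matA}) to rewrite $T_m g_{\varepsilon}$ in terms of $\sigma(g_{\varepsilon})$ — concretely, $T_m g_{\varepsilon} = \Hecke^{\varepsilon}_m \circ (\upsilon^{\varepsilon}_m)^{-1} g_{\varepsilon}$ and $(\upsilon^{\varepsilon}_m)^{-1} g_{\varepsilon}$ again lies in the relevant space and is a Hecke eigenform with eigenvalues $\lambda_k$, hence a multiple of $\sigma(g_{\varepsilon})$. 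The main obstacle I anticipate is purely bookkeeping: tracking which non-square modulo $p$ indexes each intermediate space (the chain $\varepsilon \mapsto \varepsilon m^2 \mapsto \varepsilon n^{-2} \mapsto \varepsilon n^2$, all equal as subgroups up to squares) and making sure the multiplicity-one input applies to the space $S_2(\Gamma^{\varepsilon n^2}_{ns}(p))$ and not to a differently-indexed one; once the spaces are correctly matched, the eigenform comparison is immediate.
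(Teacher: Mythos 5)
Your proposal is correct and, in its final paragraph, arrives at exactly the paper's argument: by Proposition~\ref{prop:heckeaction} the operator $T_m$ is a morphism of Hecke modules landing in $S_2(\Gamma_{ns}^{\varepsilon/m^2}(p))=S_2(\Gamma_{ns}^{\varepsilon n^2}(p))$ (since $m\equiv n^{-1}\bmod p$), so $T_m g_{\varepsilon}$ and $\sigma(g_{\varepsilon})$ are eigenforms in the same space with the same eigenvalues, and multiplicity one forces proportionality. The preliminary detour through $\upsilon^{\varepsilon}_m(g_{\varepsilon})=c\,\sigma(g_{\varepsilon})$ is unnecessary, and the expression $(\upsilon^{\varepsilon}_m)^{-1}g_{\varepsilon}$ does not typecheck as written (that inverse is defined on $S_2(\Gamma_{ns}^{\varepsilon m^2}(p))$, not on $S_2(\Gamma_{ns}^{\varepsilon}(p))$), but this is precisely the indexing issue you flag, and applying Proposition~\ref{prop:heckeaction} directly to $T_m g_{\varepsilon}$ sidesteps it entirely.
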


 \begin{proof}
   By Proposition~ \ref{prop:heckeaction}, $T_m(g_{\varepsilon})$ is an eigenform in
   $S_2\left(\Gamma_{ns}^{\varepsilon n^2}(p)\right)$ with the same
   eigenvalues as $g_{\varepsilon}$. By Corollary~\ref{coro:sigmaaction},
   $\sigma(g_{\varepsilon})$ is an eigenform whose eigenvalues
   are the same as those from $g_{\varepsilon}$. The result now follows from
   multiplicity one.
 \end{proof}

\begin{thm} Let $g_{\varepsilon} \in S_2(\Gamma^{\varepsilon}_{ns}(p))$ be a normalized eigenform
  which has the same eigenvalues as a rational newform $g \in S_2(\Gamma_0(p^2))$. Then  $g_{\varepsilon}$
  has a $q$-expansion belonging to $\QQ(\xi_{p})$.
\label{thm:coefficientfield}
\end{thm}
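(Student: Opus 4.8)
The plan is to exploit the Galois-equivariance of the $q$-expansion combined with multiplicity one, showing that $g_\varepsilon$ is fixed by $\Gal(\CC/\QQ(\xi_p))$. Fix $\sigma \in \Gal(\CC/\QQ(\xi_p))$; by definition $\sigma|_{\QQ(\xi_p)} = \sigma_1$ is the identity, so in the language of Corollary~\ref{coro:sigmaaction} we have $n \equiv 1 \bmod p$ and hence $\varepsilon n^2 \equiv \varepsilon \bmod p$. Therefore $\sigma(g_\varepsilon)$ again lies in $S_2(\Gamma_{ns}^{\varepsilon}(p))$, and by Corollary~\ref{coro:sigmaaction} it satisfies $\Hecke^{\varepsilon}_m(\sigma(g_\varepsilon)) = \sigma(\Hecke^{\varepsilon}_m(g_\varepsilon)) = \sigma(\lambda_m g_\varepsilon) = \lambda_m\, \sigma(g_\varepsilon)$, using that the $\lambda_m$ are rational. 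So $\sigma(g_\varepsilon)$ is an eigenform on $S_2(\Gamma_{ns}^{\varepsilon}(p))$ with the same system of eigenvalues as $g_\varepsilon$.

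Next I would invoke multiplicity one for the Hecke algebra $R(\Gamma^{\varepsilon}_{ns}(p),\Delta^{\varepsilon}_{ns}(p))$, which was recorded just after Theorem~\ref{thm:chen} as a consequence of Chen--Edixhoven together with classical multiplicity one for $S_2(\Gamma_0(p^2))$. This forces $\sigma(g_\varepsilon) = c_\sigma\, g_\varepsilon$ for some $c_\sigma \in \CC^\times$. To pin down $c_\sigma = 1$, I would use the normalization of $g_\varepsilon$: write the $q$-expansion of $\tilde{g_\varepsilon} = g_\varepsilon|_2[\alpha_p]$ in terms of the linear combination $\sum_\chi a_\chi (g\otimes\chi) + (\text{lower level terms})$ from Theorem~\ref{thm:combination}, whose leading coefficient is normalized (the paper calls $g_\varepsilon$ the \emph{normalized} eigenform). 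Since $\sigma$ acts on the $q$-expansion coefficient-wise and fixes the normalizing coefficient, comparing leading terms gives $c_\sigma = 1$. Hence $\sigma(g_\varepsilon) = g_\varepsilon$ for every $\sigma \in \Gal(\CC/\QQ(\xi_p))$, i.e. all $q$-expansion coefficients of $g_\varepsilon$ lie in $\QQ(\xi_p)$ (using that $g_\varepsilon$, being built from the $f_a$ and $j$ which have $q$-expansions in $\QQ(\xi_p)$, has coefficients in a field on which $\Gal(\CC/\QQ(\xi_p))$ acts through a finite quotient, so the fixed field is contained in $\QQ(\xi_p)$).

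The main obstacle I anticipate is the normalization bookkeeping: one must be careful about what ``normalized'' means for a Cartan modular form, since — as the introduction stresses — there is no clean relation between Hecke eigenvalues and Fourier coefficients here, so the normalization is only well-defined up to $\pm 1$. Concretely, I would fix the normalization at the level of $\tilde{g_\varepsilon}$ via the explicit combination in Theorem~\ref{thm:combination} (say, by requiring a specific coefficient $a_\chi$ or the first Fourier coefficient of a chosen constituent to equal $1$), check that $\sigma$ preserves that choice, and only then conclude $c_\sigma = 1$. A secondary point to verify is that the cocycle $\sigma \mapsto c_\sigma$ is trivial and not merely valued in roots of unity; this again follows from acting on a single normalized coefficient, but it should be stated explicitly rather than left implicit.
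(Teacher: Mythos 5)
Your argument is correct and follows essentially the same route as the paper: both show that $\sigma(g_{\varepsilon})$ is again an eigenform for $\Gamma_{ns}^{\varepsilon}(p)$ with the same eigenvalues, invoke multiplicity one to get proportionality, and pin down the constant by looking at the (normalized) first Fourier coefficient. The paper merely packages this through Corollary~\ref{coro:sigma-hecke}, comparing $T_{\ell}g_{\varepsilon}=\lambda_{\ell}g_{\varepsilon}$ with $c_{\ell}\,\sigma(g_{\varepsilon})$ for a prime $\ell\equiv 1\bmod p$ with $\lambda_{\ell}\neq 0$; your direct comparison of $\sigma(g_{\varepsilon})$ with $g_{\varepsilon}$ is the same computation.
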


\begin{proof}
  Let $\ell \equiv 1 \bmod{p}$ be such that
  $\lambda_{\ell} \neq 0$, and let $\sigma \in \Gal(\CC/\QQ(\xi_{p}))$ be
  arbitrary. By Corollary~\ref{coro:sigma-hecke} , there is a $c_{\ell}$ such that
\[
T_{\ell} g_{\varepsilon}=c_{\ell} \sigma (g_{\varepsilon}).
\] 
We know that $T_{\ell}g_{\varepsilon}=\lambda_{\ell}g_{\varepsilon}$ (by Corollary~\ref{coro:sameHecke}).  Looking at the first
Fourier coefficient, we get that $c_{\ell} =
\lambda_{\ell}$ and hence $g_{\varepsilon}=\sigma(g_{\varepsilon})$.  Since
$\sigma \in \Gal(\CC/\QQ(\xi_{p}))$ is arbitrary it
follows that the $q$-expansion of $g_{\varepsilon}$ lies in the desired extension.
\end{proof}

\subsection{Rational modular forms}
The curve $X^{\varepsilon}_{ns}(p)$ is defined over $\QQ$ and has $(p-1)$ cusps, all of them defined over $\QQ(\xi_{p})$ and conjugate by $\Gal(\QQ(\xi_{p})/\QQ)$
(see \cite{Serre} Appendix 5).  If $\sigma_{n} \in
\Gal(\QQ(\xi_{p})/\QQ)$, then there exists $A \in \SL_2(\ZZ)$ such
that $\sigma_{n}(\infty) = A \infty$. The matrix $A$ can be
taken to be equal to $A^{\varepsilon}_{n}$ as defined before
Lemma~\ref{lemma:matA}. Recall that if $f$ is a weight $k$ modular
form, its Fourier expansion at the cusp $A^{\varepsilon}_{n} \infty$ is
given by the Fourier expansion of the form
$f|_k[(A^{\varepsilon}_{n})^{-1}]$ at the infinity cusp.

Let $\mathcal{F}^{\varepsilon}_{ns}(p)$ be the field of rational
meromorphic functions for the Cartan non-split group
$\Gamma_{ns}^\varepsilon(p)$,
i.e. $\mathcal{F}^{\varepsilon}_{ns}(p):=\QQ(j,\sum_{i}f_{a\beta_{i}})$.
Combining Proposition~\ref{prop:galoismeromprphic} with
Lemma~\ref{lemma:matA}, it is easy to see that
$\mathcal{F}^{\varepsilon}_{ns}(p)$ consists of all meromorphic functions
invariant for $\Gamma^{\varepsilon}_{ns}(p)$, whose $q$-expansions at infinity
belong to $\QQ(\xi_{p})$ and such that the Fourier expansion at
$\sigma_{n}(\infty)$ equals $\sigma_{{n}^{-1}}(f)$. As in
Remark~\ref{rem:weight0}, the same argument applies to other weights.

\begin{definition}[Rational Modular Forms]
  A form $f \in S_2(\Gamma_{ns}^\varepsilon(p))$ is called
  \emph{rational} if its $q$-expansion at every cusp belongs to $\QQ(\xi_{p})$ and the expansion at the cusp
  $\sigma_{n}(\infty)$ equals that of
  $\sigma_{{n}^{-1}}(f)$ at the infinity cusp for all $n$ relatively prime to $p$.
  \label{rationalmodform}
\end{definition}

Recall that if $X$ is a curve defined over a field $K$, a
  differential form defined over $K$ is a differential form which is
  locally of the form $f dg$, where $f$ and $g$ are meromorphic
  forms defined over $K$.

  \begin{prop}
    If $f \in S_{2}(\Gamma_{ns}^\varepsilon(p))$ is rational, it
    defines a rational meromorphic differential form $f(q)
    \frac{dq}{q}$ on $X_{ns}^\varepsilon(p)$, where $q=e^{\frac{2 \pi
        i z}{p}}$.
 \label{prop:differentialQ}
  \end{prop}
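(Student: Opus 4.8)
The plan is to transfer the rationality of $f$ as a $q$-expansion-level object into a statement about the differential form $\omega_f := f(q)\,\frac{dq}{q}$ being defined over $\QQ$ in the sense recalled just before the statement, namely that it is locally of the form $h\,dg$ with $h,g \in \mathcal{F}^{\varepsilon}_{ns}(p)$. First I would recall that $X^{\varepsilon}_{ns}(p)$ is defined over $\QQ$, with function field over $\QQ$ precisely $\mathcal{F}^{\varepsilon}_{ns}(p)$, and that a meromorphic differential on a curve over $\QQ$ is defined over $\QQ$ if and only if, at one (equivalently every) point, it can be written $h\,dg$ with $h,g$ rational functions; equivalently, its ``divisor of values'' against a rational uniformizer gives a rational function. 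So the task reduces to exhibiting $\omega_f$ in such a form using rational meromorphic functions on the curve.

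The key computation is local at the infinity cusp. Here $q = e^{2\pi i z/p}$ is a uniformizer (or at worst $q^{1/e}$ for the appropriate width $e$, but since $\Gamma^{\varepsilon}_{ns}(p)$ contains $\left(\begin{smallmatrix}1 & p\\ 0 & 1\end{smallmatrix}\right)$ the width at $\infty$ is $1$ in the $q$-variable), and the $q$-expansion of $f$ lies in $\QQ(\xi_p)$ by hypothesis (this is exactly the content of Definition~\ref{rationalmodform}, and it is consistent with Theorem~\ref{thm:coefficientfield}). The subtlety is that $q$ itself — more precisely, the corresponding function on the curve — need not be defined over $\QQ$; rather it is defined over $\QQ(\xi_p)$ and its Galois conjugate $\sigma_n(q)$ is a local parameter at the conjugate cusp $\sigma_n(\infty) = A^{\varepsilon}_n\infty$. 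The point is that we must check $\omega_f$, viewed as an element of the one-dimensional $\mathcal{F}^{\varepsilon}_{ns}(p)$-vector space of meromorphic differentials, has the property that $\sigma(\omega_f) = \omega_f$ for all $\sigma \in \Gal(\CC/\QQ)$. I would argue: pick $\sigma$ with $\sigma|_{\QQ(\xi_p)} = \sigma_n$; by the definition of rationality, the $q$-expansion of $f$ at $\sigma_n(\infty)$ equals $\sigma_{n^{-1}}(f)$, and a direct check on the Fourier side — using that $\sigma_n(q) = \sigma_n(e^{2\pi i z/p})$ transforms the way the cusp $A^{\varepsilon}_n\infty$ does — shows that $\sigma(f(q)\tfrac{dq}{q})$, computed as a differential via $\sigma$ acting on both the function $f$ (sending it to $\sigma(f) \in S_2(\Gamma^{\varepsilon n^2}_{ns}(p))$, by Proposition~\ref{prop:galoismeromprphic} and Remark~\ref{rem:weight0}) and on the parameter $q$, returns the original $\omega_f$ once we reinterpret on $X^{\varepsilon}_{ns}(p)$; the twist $\varepsilon \mapsto \varepsilon n^2$ is absorbed because Galois conjugation of the curve $X^{\varepsilon}_{ns}(p)$ identifies it with $X^{\varepsilon n^2}_{ns}(p)$, compatibly with the cusp identification $\infty \leftrightarrow A^{\varepsilon}_n\infty$.

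Concretely, I would write $f = \lambda_0 j + \sum_a \lambda_a \sum_i f_{a\beta_i}$ with all $\lambda_\bullet$ having $q$-expansions in $\QQ(\xi_p)$ after dividing by a suitable Eisenstein series (as in Remark~\ref{rem:weight0}) to reduce weight $2$ to weight $0$; then $\omega_f = (\text{weight }0\text{ function}) \cdot E_2^{\sharp}(q)\tfrac{dq}{q}$ where $E_2^\sharp\tfrac{dq}{q}$ is a fixed rational differential, so it suffices to know that weight-$0$ meromorphic forms on $X^{\varepsilon}_{ns}(p)$ with $q$-expansions in $\QQ(\xi_p)$ satisfying the conjugate-cusp condition are exactly $\mathcal{F}^{\varepsilon}_{ns}(p)$ — which is precisely the characterization of $\mathcal{F}^{\varepsilon}_{ns}(p)$ stated in the paragraph following the definition of $\mathcal{F}^{\varepsilon}_{ns}(p)$. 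So the rationality of $f$ as a modular form translates into: $\omega_f/(E_2^\sharp\tfrac{dq}{q})$ is a rational function on $X^{\varepsilon}_{ns}(p)$, hence $\omega_f$ is a rational meromorphic differential.

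The main obstacle is the bookkeeping around the cusps: verifying that ``rational $q$-expansion plus the conjugate-cusp condition of Definition~\ref{rationalmodform}'' is exactly the Galois-descent condition for a meromorphic differential on the $\QQ$-model of $X^{\varepsilon}_{ns}(p)$, given that the uniformizer $q$ is only defined over $\QQ(\xi_p)$ and gets moved around — together with its twist $\varepsilon \mapsto \varepsilon n^2$ — by $\Gal(\QQ(\xi_p)/\QQ)$. Once one sets up carefully that the field of rational functions of the $\QQ$-curve $X^{\varepsilon}_{ns}(p)$ is $\mathcal{F}^{\varepsilon}_{ns}(p)$ and that $\sigma_n$ sends the $\infty$-cusp chart to the $A^{\varepsilon}_n\infty$-cusp chart compatibly with the twist, the rest is the elementary manipulation of $q$-expansions already used repeatedly in the preceding subsections. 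No genuinely new idea is required beyond Proposition~\ref{prop:galoismeromprphic}, Lemma~\ref{lemma:matA}, and Remark~\ref{rem:weight0}; the content is organizational.
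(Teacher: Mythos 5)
Your proposal is correct and follows essentially the same route as the paper: reduce to weight $0$ by dividing $f$ by a fixed weight-$2$ meromorphic form of level one with rational $q$-expansion, identify the quotient as an element of $\mathcal{F}^{\varepsilon}_{ns}(p)$ via the stated characterization of that field, and multiply back by the corresponding rational differential. The only detail to tighten is your choice of auxiliary form $E_2^{\sharp}$ — there is no holomorphic weight-$2$ Eisenstein series of level one, and a level-$p$ Eisenstein series need not be $\Gamma^{\varepsilon}_{ns}(p)$-invariant with the required behavior at the conjugate cusps — which the paper sidesteps by taking $\frac{p j'}{2\pi i}$, i.e.\ writing $f(q)\frac{dq}{q}=\frac{f(z)}{p j'(z)/(2\pi i)}\,dj$ with both factors visibly in $\mathcal{F}^{\varepsilon}_{ns}(p)$.
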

  \begin{proof}
 Note
  that
\[
f(q) \frac{dq}{q}= \frac{2 \pi i}{p} f(z) dz= \frac{f(z)}{ \frac{p j'(z)}{2\pi i}}dj .
\]
Since $j$ belongs to $\mathcal{F}^{\varepsilon}_{ns}(p)$ and $\frac{p
  j'}{2 \pi i}$ is a rational meromorphic function with respect to
$\SL_{2}(\ZZ)$ (of weight two) , their quotient lies in
$\mathcal{F}^{\varepsilon}_{ns}(p)$ as claimed.
\end{proof}
% In other words, any such function is
% defined over $\QQ$.

Theorem~\ref{thm:coefficientfield} says that $g_{\varepsilon}$ has $q$-expansion
with coefficients in $\QQ(\xi_p)$. If we multiply the form by any constant
in such field, the same holds. What
is the right way to normalize $g_{\varepsilon}$?

\begin{thm}
\label{thm:normalization}
Let $g_{\varepsilon} \in S_2(\Gamma_{ns}^\varepsilon(p))$ be an
eigenform with rational eigenvalues. Then there exists a constant
$c \in \QQ(\xi_p)$ such that $cg_{\varepsilon}$ is rational. Such
constant is unique up to multiplication by a non-zero rational
number.
\end{thm}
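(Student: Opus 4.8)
The strategy is to use the Galois-action machinery developed above to turn the \emph{rationality} condition of Definition~\ref{rationalmodform} into a finite system of linear conditions on the single scaling constant $c$. Fix once and for all a normalized eigenform $g_\varepsilon$ with $q$-expansion in $\QQ(\xi_p)$ (which exists by Theorem~\ref{thm:coefficientfield}). For each $n$ prime to $p$, pick $\sigma_n \in \Gal(\QQ(\xi_p)/\QQ)$ and recall from the discussion before Lemma~\ref{lemma:matA} that the cusp $\sigma_n(\infty)$ is represented by the matrix $A^\varepsilon_n$, so the $q$-expansion of $c g_\varepsilon$ at that cusp is the $q$-expansion at infinity of $(c g_\varepsilon)|_2[(A^\varepsilon_n)^{-1}]$. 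The form $c g_\varepsilon$ is rational precisely when, for every such $n$, this equals $\sigma_{n^{-1}}(c g_\varepsilon) = \sigma_{n^{-1}}(c)\,\sigma_{n^{-1}}(g_\varepsilon)$ as $q$-expansions at infinity.

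First I would reduce from ``all $n$'' to a single generator. Since $(\ZZ/p\ZZ)^\times$ is cyclic, it suffices to impose the rationality condition for one $n_0$ generating it; the conditions for all other $n$ follow by composing, using that $T_m$ and Galois commute on $q$-expansions (Corollary~\ref{coro:sigmaaction} and its proof) and that the cusps form a single $\Gal(\QQ(\xi_p)/\QQ)$-orbit. Next, by Corollary~\ref{coro:sigma-hecke}, choosing $m$ with $mn_0 \equiv 1 \bmod p$, there is a scalar $c_{m} \in \CC$ with $T_m g_\varepsilon = c_m\, \sigma_{n_0}(g_\varepsilon)$ (here $\sigma$ restricts to $\sigma_{n_0}$ on $\QQ(\xi_p)$; note $\sigma(g_\varepsilon)=g_\varepsilon$ is not available since the eigenvalues, though rational, sit in a form for a \emph{different} Cartan group unless we track things carefully — but $\sigma_{n_0}(g_\varepsilon)$ is well defined as a $q$-expansion). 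Comparing first Fourier coefficients shows $c_m \in \QQ(\xi_p)$, so $\sigma_{n_0}(g_\varepsilon) = c_m^{-1} T_m g_\varepsilon$ has a $q$-expansion we control. On the other hand, $(g_\varepsilon)|_2[(A^\varepsilon_{n_0})^{-1}]$ is, up to the action encoding $\upsilon^\varepsilon$ and $T_m$ via Proposition~\ref{prop:heckedecomposition}, expressible through the same Hecke data; comparing the two expansions forces a single relation of the shape $c^{\,\sigma_{n_0}} = \mu\, c$ for an explicit $\mu \in \QQ(\xi_p)^\times$ determined by $g_\varepsilon$ (the ratio of the two first coefficients).

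The heart of the argument is then: \emph{the equation $c^{\sigma_{n_0}} = \mu c$ has a nonzero solution $c \in \QQ(\xi_p)$, unique up to $\QQ^\times$.} Existence is a cocycle/Hilbert~90 argument: iterating $\sigma_{n_0}$ around the cyclic group $\Gal(\QQ(\xi_p)/\QQ)$ of order $p-1$ and using that the full rationality of \emph{some} scaling is guaranteed abstractly — because the differential attached to $g_\varepsilon$ lives on the Jacobian of the $\QQ$-curve $X_{ns}^\varepsilon(p)$, whose $\QQ$-rational cotangent space is nonzero (Chen--Edixhoven, Theorem~\ref{thm:chen}, plus Proposition~\ref{prop:differentialQ}) — shows $\Nm(\mu) := \mu \cdot \mu^{\sigma_{n_0}} \cdots = 1$, so Hilbert~90 for the cyclic extension $\QQ(\xi_p)/\QQ$ produces $c$. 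Alternatively, and more concretely, one exhibits a rational differential in the $g_\varepsilon$-isotypic piece directly from the Chen--Edixhoven isogeny (the new part of $\Jac(X_0(p^2))$ carries the rational form $g\,\frac{dq}{q}$, transport it), and reads off $c$ as the ratio of that form's $q$-expansion to $g_\varepsilon$'s. For uniqueness: if $c g_\varepsilon$ and $c' g_\varepsilon$ are both rational, then $(c/c')$ is fixed by every $\sigma_n$ (since rationality pins down the expansion at $\infty$ up to the scalar, and both give the same compatible system), hence $c/c' \in \QQ^\times$.

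\textbf{Main obstacle.} The delicate point is establishing \emph{existence} cleanly — i.e.\ producing the rational scaling without circular reasoning. The cyclic Hilbert~90 route needs the norm condition $\prod_{n} \mu^{\sigma_n} = 1$, which itself must be derived either from an abstract rationality statement (that $S_2(\Gamma_{ns}^\varepsilon(p))^{\text{$g$-isotypic}}$ meets the $\QQ$-rational differentials nontrivially, via Theorem~\ref{thm:chen} and Proposition~\ref{prop:differentialQ}) or by a direct $q$-expansion computation chasing the matrices $A^\varepsilon_n$ through Proposition~\ref{prop:galoismeromprphic}; I expect the former to be the right packaging and the latter to be the sanity check. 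Everything else — the reduction to a generator, the comparison of first Fourier coefficients, and the uniqueness up to $\QQ^\times$ — is routine given the results already in hand.
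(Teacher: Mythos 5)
Your overall framework is the same as the paper's: reduce the rationality condition to the single cocycle relation $c_{\ell}=\sigma_{\ell^{-1}}(c)/c$ for $\ell$ generating $(\ZZ/p\ZZ)^{\times}$, and produce $c$ by Hilbert 90 once one knows $\Nm^{\QQ(\xi_p)}_{\QQ}(c_{\ell})=1$. The uniqueness argument is also fine. But the norm-one condition is the entire content of the theorem, and you have not proved it: you explicitly label it the ``main obstacle'' and offer two sketched routes without carrying either out. That is a genuine gap. The paper closes it with a concrete Hecke-theoretic trick you are missing: choose $p-1$ \emph{distinct} primes $\ell_1,\dots,\ell_{p-1}$ all congruent to $\ell$ modulo $p$ with $\lambda_{\ell_i}\neq 0$ (this is where the no-CM hypothesis enters, via Sato--Tate or Serre's open image theorem --- a hypothesis your write-up never invokes). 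Since $\prod\ell_i\equiv 1\bmod p$, Corollary~\ref{coro:sameHecke} gives $\Hecke^{\varepsilon}_{\prod\ell_i}=T_{\prod\ell_i}=T_{\ell_1}\circ\cdots\circ T_{\ell_{p-1}}$, and comparing $\left(\prod\lambda_{\ell_i}\right)g_{\varepsilon}$ with $\left(\prod\lambda_{\ell_i}\right)\Nm(c_{\ell})\,g_{\varepsilon}$ forces $\Nm(c_{\ell})=1$ because the common eigenvalue factor is nonzero.

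Your preferred route to the norm condition --- deducing existence of a rational scaling from the $\QQ$-rational cotangent space of the Jacobian via Chen--Edixhoven and Proposition~\ref{prop:differentialQ} --- is problematic in this paper's logical order. Proposition~\ref{prop:differentialQ} only goes one way (rational form $\Rightarrow$ rational differential), and the Eichler--Shimura/modular-parametrization apparatus that would let you transport the rational differential $g\,\frac{dq}{q}$ from $X_0(p^2)$ is set up \emph{after} this theorem, using the properly normalized form $G_{\varepsilon}$ whose existence is exactly what is being proved. Making that route non-circular would require independently identifying rational differentials on the canonical $\QQ$-model of $X^{\varepsilon}_{ns}(p)$ with forms whose cuspidal $q$-expansions satisfy the Galois compatibility of Definition~\ref{rationalmodform}, which is not established here. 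So while your packaging is conceptually plausible, as written the proof is incomplete at its decisive step.
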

\begin{proof} 
  It is clear that $c$, if exists, is unique up to multiplication by a non-zero rational number. By Proposition~\ref{prop:heckedecomposition}, it is enough to find $c \in \QQ(\xi_{p})$
  such that that for all prime numbers $\ell$
\[ 
T_{\ell}(c g_{\varepsilon})=\lambda_{\ell} \sigma_{{\ell}^{-1}}(c g_{\varepsilon}).
\]

  We have that for each
  $\ell$, there exists $c_{\ell} \in \QQ(\xi_{p})$, which only depends on the class of $\ell$ modulo $p$, such that
  $T_{\ell} g_{\varepsilon}=\lambda_{\ell} c_{\ell} \sigma_{{\ell}^{-1}}(g_{\varepsilon})$ .  We
  need to find a non-zero $c \in \QQ(\xi_{p})$ such that
  $T_{\ell}(c g_{\varepsilon})=\lambda_{\ell} \sigma_{{\ell}^{-1}}(c g_{\varepsilon})$ , i.e. $c_{\ell}=\sigma_{\ell^{-1}}(c)/ c$.

  Let $\ell$ be such that its class modulo $p$ is a generator of
  $\mathbb{F}^{*}_{p}$ and let
  $\left\lbrace \ell_{i} \right\rbrace _{1 \le i \le p-1}$ be distinct
  primes in the same class of $\ell$ modulo $p$ such that
  $\lambda_{\ell_{i}} \neq 0$ (since $g$ does not have complex
  multiplication, such primes exist by Serre's open image Theorem or
  Sato-Tate Theorem).  In that case,
  $\prod_{i=1}^{p-1} \ell_i \equiv 1 \bmod p$ and
\[ 
(\Pi \, \lambda_{\ell_{i}}) g_{\varepsilon}= \Hecke_{\Pi \, \ell_i}(g_{\varepsilon}) = T_{\Pi \, \ell_{i}}(g_{\varepsilon})= T_{\ell_{1}}\circ \dots \circ T_{\ell_{p-1}}(g_{\varepsilon})= (\Pi \, \lambda_{\ell_i}) \Nm^{\QQ(\xi_{p})}_{\QQ} (c_{\ell}) g_{\varepsilon}.
\]
Since $\Nm^{\QQ(\xi_{p})}_{\QQ} (c_{\ell})=1$, by Hilbert theorem
$90$ there exists $c \in \QQ(\xi_{p})$ that satisfies
$c_{\ell}=\sigma_{\ell^{-1}}(c)/ c$. Since $\ell$ is a generator of
$\mathbb{F}^{*}_{p}$ it is easy to see that $c$ satisfies
$c_{q}=\sigma_{q^{-1}}(c)/ c$ for every $q$ relatively prime to $p$.
\end{proof}

\begin{rem} Let $g_{\varepsilon} \in S_{2}(\Gamma^{\varepsilon +}_{ns}(p))$. If $\ell
  \equiv -1 \bmod{p}$, $\sigma_{\ell}$ corresponds to complex
  conjugation in $\QQ(\xi_p)$. Since the characters involved in the
  sum are even characters, $\chi(\ell)=1$, and by the last Proposition
  $\sigma_\ell$ acts trivially. This implies that the coefficients of
  the modular forms in fact lie in
  $\QQ(\xi_{p}+{\xi_{p}}^{-1})=\QQ(\xi^{+}_{p}) $. Similarly, if $g_{\varepsilon}
  \in S_{2}(\Gamma^{\varepsilon -}_{ns}(p))$, the coefficients will be purely
  imaginary.
\label{rem:sign}
\end{rem}

Note that even for a rational modular form, it is not clear how to
choose the rational multiple of it which should correspond to
``$a_1=1$'' in the classical case. The best one can do is to choose the
coefficients to be algebraic integers and have no common rational integer factor.

\begin{definition}
The \emph{proper normalization} of $g_{\varepsilon}$ is the unique (up to sign) renormalization $G_{\varepsilon}$ of $g_{\varepsilon}$ that satisfies:

  \begin{itemize}
  \item  $G_{\varepsilon}$ is a rational newform.
  \item The Fourier expansion of $G_{\varepsilon}$ has algebraic integer coefficients.
  \item If $n\in \ZZ$ and $n\ge 2$, $\frac{G_{\varepsilon}}{n}$ does not have
    integral coefficients.
  \end{itemize}
\end{definition}

\begin{rem}
 If $G_{\varepsilon} \in S_2(\Gamma_{ns}^\varepsilon(p))$ is a properly-normalized eigenform with rational eigenvalues then $\sigma_{n} (G_{\varepsilon}) \in S_2(\Gamma_{ns}^{\varepsilon n^2}(p))$ is a properly-normalized eigenform with rational eigenvalues. Moreover since $G_{\varepsilon}$ is rational, we must have $\sigma_{n}(G_{\varepsilon})=G_{\varepsilon}|_k[(A^{\varepsilon}_{n})]$ (see Definition \ref{rationalmodform}).

\label{rem:changeofcartan}
\end{rem}

 \subsection{ Eichler-Shimura}
 The Eichler-Shimura construction (Theorem 7.9 of \cite{Shimura})
 associates to $G_{\varepsilon}$ the abelian variety
 $\A_{G_{\varepsilon}}:= \Jac(X_{ns}^\varepsilon(p))/
 (I_{G_{\varepsilon}} \Jac(X_{ns}^\varepsilon(p)))$,
 where $I_{G_{\varepsilon}}$ is the kernel of the morphism from
$R(\Gamma^{\varepsilon}_{ns}(p),\Delta^{\varepsilon}_{ns}(p)) \rightarrow \mathbb{Z}$ which is
 given by sending $\Hecke^{\varepsilon}_{n}$ to the eigenvalue
 $\lambda_{n}$. We have the diagram
\[
\xymatrix{
X_{ns}^\varepsilon(p)\ar@{^{(}->}[r]^{i} \ar@{..>}[dr] & \Jac(X_{ns}^\varepsilon(p))\ar[d]\\
 & \A_{G_{\varepsilon}}
}
\]
where $i$ is the map sending $P$ to
$\left(P \right) - \left(\infty \right)$ and the vertical map (which
is clearly rational) is given by the classical Abel-Jacobi map
given by integrating the differential form
$G_{\varepsilon}(q) \frac{dq}{q}$ and its Galois conjugates over
cycles. By Proposition \ref{prop:differentialQ} this differential is rational, thus the
abelian variety $\A_{G_{\varepsilon}}$ is of dimension $1$, and by
Theorem ~\ref{thm:chen} isogenous to the strong Weil curve $E_g$
attached to $g$. The elliptic curve $\A_{G_{\varepsilon}}$ will be
called the optimal quotient of $\Jac(X^{\varepsilon}_{ns}(p))$
(note that it might not be isomorphic to $E_g$).

Since the cusps of the Cartan curve are defined over $\QQ(\xi_{p})$
(and are Galois conjugates over that field) the map $i$ will not be
defined over $\QQ$. Nevertheless, we can solve this problem by
averaging over all the conjugates of this map; that is, we consider
the following diagram
\[
\xymatrix{
X_{ns}^\varepsilon(p)\ar@{^{(}->}[r]^{\iota} \ar@{..>}[dr]_{\Phi_p^\varepsilon} & \Jac(X_{ns}^\varepsilon(p))\ar[d]\\
 & \A_{G_{\varepsilon}}
}
\]
where $\iota$ is the map sending $P$ to
$ \sum_{\sigma \in \Gal(\QQ(\xi_{p})/\QQ)} (P)-(\sigma(\infty))$. This
is the right and natural definition to make a map defined over $\QQ$
out of $i$.  Therefore, the dot map (that we still call \emph{modular
  parametrization}) is defined over $\QQ$.

\begin{rem} If $G_{\varepsilon} \in S_{2}(\Gamma^{\varepsilon +}_{ns}(p))$, since the normalizer has $(p-1)/2$ cusps, all defined and conjugate over the maximal real subfield of $\QQ(\xi_{p})$, we will take the average in the definition of  $\iota$ over all such cusps.
\end{rem}

\begin{lemma}
  Let $n$ be relatively prime to $p$. Then $\A_{G_{\varepsilon}} =
  \A_{G_{\varepsilon n^2}}$.
\end{lemma}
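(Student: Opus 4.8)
The plan is to produce an isomorphism $\iota\colon X^{\varepsilon}_{ns}(p)\xrightarrow{\sim} X^{\varepsilon n^{2}}_{ns}(p)$ \emph{defined over $\QQ$}, to verify that it is Hecke-equivariant and matches the cuspidal and differential data attached to $G_{\varepsilon}$ and $G_{\varepsilon n^{2}}$, and then to push this through the Eichler--Shimura construction. The simplest way to build $\iota$ is via the moduli interpretation of Proposition~\ref{prop:moduliinterpretation}: send a pair $(E,\phi)$ with $\phi^{2}=\varepsilon$ to the pair $(E,n\phi)$. Since $(n\phi)^{2}=n^{2}\varepsilon$, the target pair represents a point of $X^{\varepsilon n^{2}}_{ns}(p)$; since $n\in\ZZ$, the endomorphism $n\phi$ is defined over exactly the same number fields as $\phi$, so this assignment is compatible with the Galois action and respects rationality over every field; and it is a bijection on points, with inverse $(E,\psi)\mapsto(E,\psi/n)$. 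As $X^{\varepsilon}_{ns}(p)$ and $X^{\varepsilon n^{2}}_{ns}(p)$ are smooth projective curves over $\QQ$, such a $\Gal$-equivariant bijection on geometric points is an isomorphism defined over $\QQ$.

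I would then identify $\iota$ on the analytic models. Running the surjectivity argument in the proof of Proposition~\ref{prop:moduliinterpretation} (via Lemma~\ref{lemma:conjugationrepresentative}) for the pair $(E_{\tau},n\phi_{\tau})$ shows that $\iota$ sends $\Gamma^{\varepsilon}_{ns}(p)\tau$ to $\Gamma^{\varepsilon n^{2}}_{ns}(p)\,(A^{\varepsilon}_{n})^{-1}\tau$; the only computation required is the identity $(\overline{A^{\varepsilon}_{n}})^{-1}\bigl(n\left(\begin{smallmatrix}0&1\\\varepsilon&0\end{smallmatrix}\right)\bigr)\overline{A^{\varepsilon}_{n}}=\left(\begin{smallmatrix}0&1\\\varepsilon n^{2}&0\end{smallmatrix}\right)$, which is immediate from $\overline{A^{\varepsilon}_{n}}\equiv\bar{B}\left(\begin{smallmatrix}1&0\\0&1/n\end{smallmatrix}\right)$ with $\bar{B}\in C^{\varepsilon}_{ns}(p)$ commuting with $\left(\begin{smallmatrix}0&1\\\varepsilon&0\end{smallmatrix}\right)$. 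In particular $\iota$ carries cusps to cusps, and, since $G_{\varepsilon n^{2}}=\pm\sigma_{n}(G_{\varepsilon})=\pm G_{\varepsilon}|_{2}[A^{\varepsilon}_{n}]$ by Remark~\ref{rem:changeofcartan}, the pullback along $\iota$ of the rational differential $G_{\varepsilon n^{2}}(q)\frac{dq}{q}$ of Proposition~\ref{prop:differentialQ} equals $\pm\, G_{\varepsilon}(q)\frac{dq}{q}$.

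To conclude, the $\QQ$-isomorphism $\iota$ induces a $\QQ$-isomorphism $\iota_{*}\colon\Jac(X^{\varepsilon}_{ns}(p))\xrightarrow{\sim}\Jac(X^{\varepsilon n^{2}}_{ns}(p))$, and by Proposition~\ref{prop:heckeaction} it intertwines $\Hecke^{\varepsilon}_{m}$ with $\Hecke^{\varepsilon n^{2}}_{m}$ for every $m$ prime to $p$. Since $G_{\varepsilon}$ and $G_{\varepsilon n^{2}}$ share the Hecke eigenvalue system $\{\lambda_{m}\}$ of $g$, the ideals $I_{G_{\varepsilon}}$ and $I_{G_{\varepsilon n^{2}}}$ correspond under this identification, so $\iota_{*}$ maps $I_{G_{\varepsilon}}\Jac(X^{\varepsilon}_{ns}(p))$ onto $I_{G_{\varepsilon n^{2}}}\Jac(X^{\varepsilon n^{2}}_{ns}(p))$ and hence descends to a $\QQ$-isomorphism $\A_{G_{\varepsilon}}\xrightarrow{\sim}\A_{G_{\varepsilon n^{2}}}$. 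The cusp and differential computations of the previous paragraph show this isomorphism is compatible, up to sign, with the two modular parametrizations $\Phi^{\varepsilon}_{p}$ and $\Phi^{\varepsilon n^{2}}_{p}$; this is the sense in which $\A_{G_{\varepsilon}}=\A_{G_{\varepsilon n^{2}}}$.

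The one point that requires care is the claim that the analytic map $\tau\mapsto(A^{\varepsilon}_{n})^{-1}\tau$ --- a priori defined only over $\CC$, and in fact moving the cusp $\infty$, whose field of definition is $\QQ(\xi_{p})$ rather than $\QQ$ --- nonetheless descends to a morphism of the $\QQ$-models; this is exactly what the moduli description $(E,\phi)\mapsto(E,n\phi)$ makes transparent. After that, the Hecke-equivariance of $\iota_{*}$, the matching of the Eichler--Shimura ideals, and the comparison of differentials are formal consequences of results already in hand.
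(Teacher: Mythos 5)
Your argument is correct, and its computational core --- conjugation by $A^{\varepsilon}_{n}$, which carries $\Gamma^{\varepsilon}_{ns}(p)$ to $\Gamma^{\varepsilon n^{2}}_{ns}(p)$, together with $G_{\varepsilon n^{2}}=\pm\sigma_{n}(G_{\varepsilon})=\pm G_{\varepsilon}|_{2}[A^{\varepsilon}_{n}]$ from Remark~\ref{rem:changeofcartan} --- is exactly what the paper uses. The routes differ in packaging. The paper constructs no geometric map at all: it integrates $G_{\varepsilon}$ over a cycle $\{\tau, M^{\varepsilon}\tau\}$, changes variables by $(A^{\varepsilon}_{n})^{-1}$, and observes that one obtains the integral of $\sigma_{n}(G_{\varepsilon})$ over the closed cycle $\{\tau',(A^{\varepsilon}_{n})^{-1}M^{\varepsilon}A^{\varepsilon}_{n}\tau'\}$ on $X^{\varepsilon n^{2}}_{ns}(p)$; hence the two period lattices coincide and the Eichler--Shimura quotients are the same complex torus. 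You instead manufacture an isomorphism $\iota$ of the two Cartan curves from the moduli problem and push it through the Jacobians, which buys more: $\QQ$-rationality of the identification and its compatibility with the Hecke action and both modular parametrizations. Two small points of care: a Galois-equivariant bijection on geometric points is not by itself a morphism of varieties, so the rationality of $\iota$ really rests on first knowing it is holomorphic (which your identification with $\tau\mapsto(A^{\varepsilon}_{n})^{-1}\tau$ supplies) and then descending; and the sign ambiguity in $G_{\varepsilon n^{2}}=\pm\sigma_{n}(G_{\varepsilon})$ is harmless because a lattice equals its negative. For the lemma as stated, the analytic half of your argument already suffices and is precisely what the paper records.
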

\begin{proof}
  It is enough to see that the lattice of periods of $G_{\varepsilon}$ is the same
  as the lattice of periods of $\sigma_{n}(G_{\varepsilon})=G_{\varepsilon n^2}$ which is a rational
  eigenform for $S_2(\Gamma_{ns}^{\varepsilon n^2}(p))$ (Remark
  ~\ref{rem:changeofcartan}).  Let $D$ be the closed cycle
  $\left\lbrace \tau , M^{\varepsilon} \tau \right\rbrace$ with
  $M^{\varepsilon} \in \Gamma^{\varepsilon}_{ns}(p)$. Integrating $G_{\varepsilon}$
  over that cycle we get
 \[ \int_{ \tau }^ {M^{\varepsilon} \tau} G_{\varepsilon}(q) \frac{dq}{q} . \]
 By changing variables $z \mapsto \  [A^{\varepsilon}_{n}]^{-1} z$ we obtain
 
 \[ \int_{ [A^{\varepsilon}_{n}]^{-1}  \tau}^ { [A^{\varepsilon}_{n}]^{-1}   M^{\varepsilon}  \tau }  G_{\varepsilon}|_k[(A^{\varepsilon}_{n})]         \frac{dq}{q}
  = \int_{ [A^{\varepsilon}_{n}]^{-1}  \tau}^ { [A^{\varepsilon}_{n}]^{-1}   M^{\varepsilon} [A^{\varepsilon}_{n}] [A^{\varepsilon}_{n}]^{-1}     \tau } \sigma_{n}(G_{\varepsilon})       \frac{dq}{q}.
 \]
 This expression is the integral of $\sigma_{n}(G_{\varepsilon})$ over
 the cycle
 $\left\lbrace \tau' , [A^{\varepsilon}_{n}]^{-1} M^{\varepsilon}
   [A^{\varepsilon}_{n}] \tau' \right\rbrace$,
 where $\tau'=[A^{\varepsilon}_{n}]^{-1} \tau$. Since
 $[A^{\varepsilon}_{n}]^{-1} M^{\varepsilon} [A^{\varepsilon}_{n}] \in
 \Gamma^{\varepsilon n^2}_{ns}(p)$,
 it gives a closed cycle on $\Jac(X^{\varepsilon n^2}_{ns}(p))$.
\end{proof}

Let $E$ denote the elliptic curve $\A_{G_{\varepsilon}}$ (which does
not depend on $\varepsilon$). If $\omega_{E}$ is a holomorphic
differential on $\CC/{\Lambda_{E}}$ its pullback under
$\Phi_p^\varepsilon$ is a constant multiple of
$G_{\varepsilon}(q) \frac{dq}{q}$ (by multiplicity one), where $q=e^{\frac{2 \pi i
    z}{p}}$.
Such constant will be called the Manin constant
$c_{\varepsilon}$. Since $E, \Phi_p^\varepsilon $ and
$G_{\varepsilon}(q) \frac{dq}{q}$ are rational, the Manin constant
must be a rational number.  It is not difficult to see that the
Manin constant does not depend on $\varepsilon$ so we can speak of the
Manin constant $c$.

\begin{prop}
  Let $\Lambda_{G_{\varepsilon}}$ be the lattice attached to $G_{\varepsilon}$ and $c$ the Manin
  constant. Let $\Phi_\omega : \CC/\Lambda_{G_{\varepsilon}} \to E$ be the Weierstrass
  uniformization. Then $\Phi_p^\varepsilon
  (\tau)= \Phi_{\omega}(z_{\tau})$, where  
 \[ z_{\tau}= c \left( \frac{2\pi i}{p}  \left(\sum_{\sigma_{n}
  \in \Gal(\QQ(\xi_{p})/\QQ)} \int_{\infty}^{A_{n^{-1}}\tau}
\sigma_{n}(G_{\varepsilon}^{\varepsilon})(z)dz\right) \right)  \]
 \label{eq:modparam}
\end{prop}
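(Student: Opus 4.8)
The plan is to unwind the definition of the modular parametrization $\Phi_p^\varepsilon$ as the composition of the averaged Abel–Jacobi embedding $\iota$ with the projection $\Jac(X_{ns}^\varepsilon(p)) \to \A_{G_\varepsilon}$, and to identify this with the Weierstrass uniformization $\Phi_\omega$ of $E = \A_{G_\varepsilon}$ up to the Manin constant. First I would recall that by the Eichler–Shimura construction, the projection to $\A_{G_\varepsilon}$ on the level of Jacobians is computed by integrating the rational differential $G_\varepsilon(q)\frac{dq}{q}$ together with all its Galois conjugates over $1$-cycles; since $\A_{G_\varepsilon}$ is $1$-dimensional by Proposition~\ref{prop:differentialQ} and Theorem~\ref{thm:chen}, the period lattice can be taken to be $\Lambda_{G_\varepsilon}$, and the pullback of $\omega_E$ under $\Phi_p^\varepsilon$ is exactly $c\, G_\varepsilon(q)\frac{dq}{q}$ by definition of the Manin constant $c$.

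Next I would evaluate $\iota(\tau)$ explicitly. By definition $\iota(\tau) = \sum_{\sigma_n \in \Gal(\QQ(\xi_p)/\QQ)} \bigl((\tau) - (\sigma_n(\infty))\bigr)$ as a divisor of degree zero on $X_{ns}^\varepsilon(p)$, and since $\sigma_n(\infty) = A^{\varepsilon}_n \infty$ (as recalled before Lemma~\ref{lemma:matA}), its image in $\CC/\Lambda_{G_\varepsilon}$ is obtained by integrating $c\,G_\varepsilon(q)\frac{dq}{q}$ along paths from each cusp $A^{\varepsilon}_n\infty$ to $\tau$. Using $G_\varepsilon(q)\frac{dq}{q} = \frac{2\pi i}{p} G_\varepsilon(z)\,dz$ and then changing variables $z \mapsto [A^{\varepsilon}_n]^{-1} z$ in each summand — exactly the substitution performed in the proof of the previous Lemma — moves the lower limit of integration from $A^{\varepsilon}_n\infty$ to $\infty$ and replaces the integrand by $G_\varepsilon|_2[A^{\varepsilon}_n] = \sigma_n(G_\varepsilon)$ (the last equality is Remark~\ref{rem:changeofcartan}), while the upper limit becomes $[A^{\varepsilon}_n]^{-1}\tau = A_{n^{-1}}\tau$ since $A^{\varepsilon}_n$ plays the role of $A_{n^{-1}}^{-1}$ (one must be careful here about the indexing convention $\sigma_n(\xi_p) = \xi_p^{n^{-1}}$, so that $\sigma_n \leftrightarrow A_{n^{-1}}$). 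Summing over $n$ yields precisely
\[
z_\tau = c\left(\frac{2\pi i}{p}\sum_{\sigma_n \in \Gal(\QQ(\xi_p)/\QQ)} \int_{\infty}^{A_{n^{-1}}\tau} \sigma_n(G_\varepsilon)(z)\,dz\right),
\]
and then $\Phi_p^\varepsilon(\tau) = \Phi_\omega(z_\tau)$ follows since both maps are the composition of the same Abel–Jacobi integral with the uniformization.

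I expect the main obstacle to be bookkeeping rather than conceptual: one must check that the change of variables correctly matches the Galois index $n$ (the automorphism $\sigma_n$) with the cusp $A^{\varepsilon}_n\infty$ and with the slash action by $A_{n^{-1}}$, so that the sum over $\Gal(\QQ(\xi_p)/\QQ)$ is parametrized consistently and no conjugate differential gets counted twice or skipped. There is also a minor subtlety in confirming that $\sigma_n(G_\varepsilon) = G_\varepsilon|_2[A^{\varepsilon}_n]$ genuinely holds at the level of $q$-expansions (not merely up to a rational scalar), which is guaranteed by rationality of $G_\varepsilon$ via Definition~\ref{rationalmodform} and Remark~\ref{rem:changeofcartan}; once that identification is in hand, the integrals literally assemble into the stated formula. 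Everything else — independence from the chosen paths (divisor has degree zero), the constant $c$ being the Manin constant, the holomorphy needed to apply the Weierstrass uniformization — has already been established in the preceding subsections, so the proof is essentially a substitution argument.
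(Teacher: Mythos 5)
Your proposal is correct and follows essentially the same route as the paper: the paper's proof consists precisely of the change-of-variables identity $\int_{\sigma_n(\infty)}^{\tau} G_{\varepsilon}(q)\frac{dq}{q} = \int_{\infty}^{A_n^{-1}\tau} G_{\varepsilon}|_2[A_n](q)\frac{dq}{q} = \int_{\infty}^{A_n^{-1}\tau}\sigma_n(G_{\varepsilon}(q))\frac{dq}{q}$ that you derive, combined with a citation to Proposition 2.11 of Darmon's book for the classical relation between the Abel--Jacobi integral, the Manin constant, and the Weierstrass uniformization (which you instead spell out by unwinding the Eichler--Shimura construction). The bookkeeping points you flag (matching $\sigma_n$ with the cusp $A_n^{\varepsilon}\infty$ and using Remark~\ref{rem:changeofcartan} for $\sigma_n(G_{\varepsilon}) = G_{\varepsilon}|_2[A_n^{\varepsilon}]$) are exactly the content the paper relies on.
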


\begin{proof}
 This follows from Proposition 2.11 of \cite{Dar04} and the identity
 
 \[   \int_{ \sigma_{n}(\infty) }^ \tau G_{\varepsilon}(q) \frac{dq}{q}=\int_{ \infty }^
{A_{n}^{-1} \tau} G_{\varepsilon}|_2[A_{n}] (q) \frac{dq}{q} = \int_{\infty}^ {A_{n}^{-1} \tau} \sigma_{n}(G_{\varepsilon} (q)) \frac{dq}{q}  .  \]

\end{proof}

\section{ General levels}
\label{section:generallevels}
In this section we generalize the previous results to more general
conductors. Thanks to the Chinese Reminder Theorem, the theory works
exactly the same as in the $p^2$ case. Let $E/ \QQ$ be an elliptic
curve of conductor $N^2m$ with $\gcd(N,m)=1$, and $N=p_1 \dots p_r$
($p_{i}$ distinct odd primes). By Shimura-Taniyama-Wiles, there exists
an eigenform $g \in S^{new}_{2}(\Gamma_{0}(N^2 m))$ with rational
eigenvalues whose attached elliptic curve is $E$.  Let $\varepsilon_i$
be a non-square modulo $p_i$, for $i=1,\dots ,r$ and let
$\vec{\varepsilon}=(\varepsilon_1,\ldots,\varepsilon_r)$. Let
$\Gamma_{ns}^{\vec{\varepsilon}}(N,m)=
\cap_{i=1}^r\Gamma^{\varepsilon_i}_{ns}(p_i) \cap \Gamma_{0}(m)$
and consider the curve
$X_{ns}^{\vec{\varepsilon}}(N,m)=\Gamma_{ns}^{\vec{\varepsilon}}(N,m)
\backslash \mathcal{H} ^{*}$.

The moduli interpretation is a mix of the classical one and the one
of the previous section. We consider tuples
$(E,\psi,\phi_1,\ldots,\phi_r)$, where $E/\CC$ is an elliptic curve,
$\psi:E \to E'$ is a cyclic degree $m$ isogeny (or equivalently a
cyclic subgroup of order $m$), and $\phi_i \in \End_{\FF_{p_{i}}}(E[p_i])$
is such that $\phi_i^2$ corresponds to multiplication by
$\varepsilon_i$ for $i=1,\ldots,r$. A computation similar to that of
Proposition~\ref{prop:moduliinterpretation} shows that
$X_{ns}^{\vec{\varepsilon}}(N,m)$ represents the moduli problem
stated.

\medskip

We have the following generalization of Theorem \ref{thm:chen}.
\begin{thm}
  $\Jac(X_{ns}^{\vec{\varepsilon}}(N,m))$ is isogenous over $\QQ$ to
  $\Jac(X_{0}(N^2m))^{N^2\text{-new}}$ by a Hecke equivariant map.
\end{thm}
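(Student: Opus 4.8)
The plan is to reduce the general level statement to the prime-power case $N = p^2$ already treated (Theorem~\ref{thm:chen}) via the Chinese Remainder Theorem, exactly as the paragraph preceding the statement advertises. First I would observe that the group $\Gamma_{ns}^{\vec{\varepsilon}}(N,m) = \bigcap_{i=1}^r \Gamma_{ns}^{\varepsilon_i}(p_i) \cap \Gamma_0(m)$ is obtained from the principal congruence subgroup $\Gamma(N) \cap \Gamma_0(m)$ by imposing, at each prime $p_i$ separately, the same Cartan non-split condition that was used in Section~1; since the $p_i$ are distinct primes, these conditions are ``independent'' and CRT gives a clean decomposition of the relevant Hecke rings. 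Concretely I would define the mixed Hecke ring $R(\Gamma_{ns}^{\vec{\varepsilon}}(N,m), \Delta_{ns}^{\vec{\varepsilon}}(N,m))$ for determinants prime to $Nm$, and construct, as in Proposition~\ref{lemma:heckeiso}, an isomorphism $h$ from the classical Hecke ring $R(\Gamma(N)\cap\Gamma_0(m), \Delta)$ to it, by sending $\Gamma\beta\Gamma \mapsto \Gamma_{ns}^{\vec{\varepsilon}}(N,m)\, A_{\det\beta}^{\vec{\varepsilon}}\,\beta\,\Gamma_{ns}^{\vec{\varepsilon}}(N,m)$, where $A_{n}^{\vec{\varepsilon}} \in \SL_2(\ZZ)$ is chosen (by CRT) to reduce to the correct local matrix $A_n^{\varepsilon_i}$ modulo each $p_i$ and to the identity modulo $m$. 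The arguments of Lemma~\ref{lemma:matA} and Proposition~\ref{lemma:heckeiso} apply verbatim since they only used properties at a single prime, now imposed simultaneously.

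Next I would run the Eichler–Shimura / Jacobian comparison. The inclusion $\Gamma(N) \cap \Gamma_0(m) \subset \Gamma_{ns}^{\vec{\varepsilon}}(N,m)$ realizes $X_{ns}^{\vec{\varepsilon}}(N,m)$ as a quotient of the modular curve $X(N)\cap X_0(m)$, and slashing by the diagonal matrix $\alpha_N$ identifies weight-two forms with forms on a congruence subgroup contained in $\Gamma_0(N^2 m)$, exactly as $\tilde\Gamma(p)$ appeared in the $p^2$ case. Under this identification the space $S_2(\Gamma_{ns}^{\vec{\varepsilon}}(N,m))$ corresponds to a Hecke-stable subspace of $S_2$ of level $N^2 m$, and the Hecke action (for operators of level prime to $Nm$) matches the classical one via $h$ composed with the operators $\upsilon_n^{\vec{\varepsilon}}$. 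I would then invoke the single-prime Chen–Edixhoven theorem at each $p_i$ together with the fact (Theorem~2 of \cite{SmEd}) that the decompositions are functorial, so that they can be iterated prime by prime: the $N^2$-new part of $\Jac(X_0(N^2m))$ splits off, and for each $p_i$ the split vs.\ non-split Cartan components are exchanged, yielding a Hecke-equivariant isogeny onto $\Jac(X_{ns}^{\vec{\varepsilon}}(N,m))$ defined over $\QQ$ (the curve being defined over $\QQ$ because $\det$ is surjective onto $(\ZZ/N)^\times$ on each Cartan factor).

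The main obstacle I anticipate is bookkeeping rather than any genuinely new idea: one must check that the various prime-by-prime isogenies of Chen–Edixhoven are compatible and can be tensored together without collisions, i.e. that ``new at $p_i^2$ for all $i$'' and ``new at $m$'' are independent conditions that the functorial decomposition respects simultaneously. This is where I would lean hardest on the functoriality statement from \cite{SmEd} invoked in the proof of Theorem~\ref{thm:chen}, applying it to the product of the local groups at the $p_i$ acting on the Jacobian of $X(N)\cap X_0(m)$; the commuting actions of the distinct $\Gamma_{ns}^{\varepsilon_i}(p_i)/\Gamma(p_i)$ let one isolate, for each $i$, the $\varepsilon_i$-non-split isotypic piece while leaving the other primes untouched, and the Hecke operators of level prime to $Nm$ commute with all of these by construction, so they are preserved by the resulting isogeny. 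Since no phenomenon occurs at a composite modulus that did not already occur at $p^2$, I expect the proof to consist essentially of the sentence ``apply Theorem~\ref{thm:chen} one prime at a time, using CRT to glue the auxiliary matrices $A_n^{\vec\varepsilon}$ and the functoriality of \cite{SmEd} to glue the isogenies.''
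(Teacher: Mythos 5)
Your proposal follows essentially the same route as the paper: the paper also proceeds one prime at a time, viewing both curves as quotients of $X(Nm)$ by subgroups of $\GL_2(\ZZ/Nm)$ that differ only in their local component at $p_1$, re-proving Edixhoven's Proposition 1.2 for that local component, and invoking the functoriality of the decompositions from \cite{SmEd} (Theorem 1.3 of \cite{Edi}) to obtain the Hecke-equivariant isogeny and to iterate over the remaining primes. Your emphasis on applying the \cite{SmEd} functoriality to the commuting local group actions on $\Jac(X(Nm))$ is exactly the mechanism the paper uses, so the argument is correct and matches the paper's.
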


\begin{proof} Let $X(Nm)$ be the modular curve which is the
  compactified moduli space of triples $(E/S/\QQ,\phi)$, where $S$ is
  a $\QQ$ scheme, $E/S$ is an elliptic curve and $\phi:(\ZZ/Nm)^2_S
  \mapsto E[Nm]$ is an isomorphism of group schemes over $S$. The
  group $\GL_2(\ZZ/Nm)$ acts on the right on $X(Nm)$. If $\Gamma$ is
  any subgroup of $\GL_2(\ZZ/Nm)$, one can consider the quotient
  $X(Nm)/\Gamma$ via an appropriate moduli interpretation. We are
  interested in the following two subgroups (as subgroups of
  $\GL_2(\ZZ/Nm)$): $\Gamma_{ns}^{\vec{\varepsilon}}(N,m)$ and
  $\tilde{\Gamma}:= \cap_{i=1}^r T(p_i) \cap \Gamma_0(m)$, where
  $T(p)$ is the standard maximal torus modulo $p$ (consisting of
  diagonal matrices). The quotients correspond respectively to
  $X_{ns}^{\vec{\varepsilon}}(N,m)$ and $X_0(N^2m)$ (as in \cite{Edi},
  (1.0.4)).

  Using an inductive argument, it is enough to prove that the Jacobian
  of the quotient by $\Gamma_1=\cap_{i=1}^r T(p_i) \cap \Gamma_0(m)$
  is isomorphic to the $p_1$-new part of the quotient by
  $\Gamma_2=\cap_{i=2}^r T(p_i) \cap \Gamma_0(p_1^2m)$. But in this
  case, one can prove Proposition 1.2 of \cite{Edi} in exactly the
  same way, where now the subgroups of such paper correspond to the
  local components at $p_1$ of our subgroups (since both groups are
  the same at all the other primes). Then, the same formalism as
  Theorem 1.3 (of \cite{Edi}) proves our claim.
\end{proof}

The previous theorem, together with the comments in the proof of
Theorem~\ref{thm:chen}, imply that there exists $g_{\vec{\varepsilon}} \in
S_{2}(\Gamma_{ns}^{\vec{\varepsilon}}(N,m))$ with the same eigenvalues
for the Hecke operators $\Hecke^{\vec{\varepsilon}}_{n}$ as $g$ outside the primes
$p_{i}$. The theory works the same as in the level $p^2$ case, with
some minor changes. 

The geometric definition of Hecke operators is the same as before. We
consider all degree $n$ cyclic isogenies (for $n$ prime to $Nm$) and consider
the same action on each $\phi_i$ and, as in the classical case, the
image of the cyclic subgroup by our isogeny.

The algebraic definition is also the same, and the operator
$\upsilon^{\vec{\varepsilon}}_n$, as well as coset representatives,
are defined via a matrix $A^{\vec{\varepsilon}}_{n} \in \Gamma_{0}(m)$
which satisfies the corresponding congruence modulo all the prime
numbers $p_i$.

Note that $\sigma_{{n}^{-1}}$ and $T_{n}$ will
send modular forms for $\Gamma_{ns}^{\vec{\varepsilon}}(N,m)$ to
modular forms for $\Gamma_{ns}^{\vec{\varepsilon}/{n}^{2}}(N,m)$,
 and all the results from the previous Section generalize trivially.
 In particular, we have the analogue of Theorem~\ref{thm:combination}.
	
\begin{thm}
  Let $g_{\vec{\varepsilon}} \in S_2(\Gamma_{ns}^{\vec{\varepsilon}}(N,m))$ be an
  eigenform. Then there exists  eigenforms $h_{i} \in
  S_{2}(\Gamma_{0}(N_{i}m),\chi_i)$, with $N_{i}\mid N^2$, and $\chi_{i}$ a
   character modulo $N^2/N_{i}$ such that
\[
\widetilde{g_{\vec{\varepsilon}}} = \sum_{\chi} a_{\chi} (g \otimes \chi)+  \sum_{i} a^{1}_{i}h_{i}+ a^{2}_{i}\bar{h_{i}},
\]
where the first sum is over all characters modulo $N$.
\label{thm:combinationgeneral}
\end{thm}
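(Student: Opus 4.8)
The plan is to mimic the argument of Theorem~\ref{thm:combination} with the Chinese Remainder Theorem keeping track of local data prime by prime. First I would observe that $g_{\vec{\varepsilon}}$, being an eigenform for $\Hecke^{\vec{\varepsilon}}_n$ for all $n$ relatively prime to $Nm$, and since $\Hecke^{\vec{\varepsilon}}_n = T_n$ whenever $n \equiv 1 \bmod p_i$ for all $i$ (the generalization of Corollary~\ref{coro:sameHecke}, via the matrix $A^{\vec{\varepsilon}}_n \in \Gamma_0(m)$ which can be taken to be the identity in that case), the slashed form $\widetilde{g_{\vec{\varepsilon}}}$ lies in the span of classical eigenforms on $S_2(\widetilde{\Gamma}(N,m))$ (where $\widetilde{\Gamma}$ is $\Gamma_{ns}^{\vec\varepsilon}(N,m)$ conjugated by the appropriate scaling matrix at each $p_i$) having the same $T_n$-eigenvalues as $g$ for $n \equiv 1$ modulo $N$. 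Call this space $\A$. The key point, exactly as before, is to characterize $\A$.

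Next I would apply Rajan's strong multiplicity one result (Theorem~\ref{thm:rajan}), but now I only invoke it at the level of the fixed newform $g$: any eigenform $f$ with the same eigenvalues as $g$ at a set of primes of positive density must be $g \otimes \chi$ at almost all primes, for some Dirichlet character $\chi$. Since in our situation $f$ and $g$ agree at every prime $\equiv 1 \bmod N$ — a set of density $1/\varphi(N)$, hence positive upper density — the relevant characters $\chi$ are exactly those trivial on $1 + N\ZZ$, i.e. the characters modulo $N$. So every element of $\A$ is, up to the usual oldform ambiguity at the $p_i$, one of the twists $g \otimes \chi$ with $\chi$ modulo $N$. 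The oldform ambiguity is handled componentwise: at each prime $p_i$ dividing $N$ exactly to the second power in the conductor, whether $g \otimes \chi$ is new at $p_i$ (living on $\Gamma_0(N^2m/p_i^{?})$ with some character) or comes from a lower-level newform is dictated by the type of the local automorphic representation $\pi_{p_i}$ of $g$, precisely as in the trichotomy supercuspidal / Steinberg / ramified principal series of Theorem~\ref{thm:combination} and the Atkin–Li theory \cite{Atkin-Li}. Running over all $i$ and combining, the newforms that can appear are exactly the $g \otimes \chi$ ($\chi$ mod $N$) together with a finite collection of newforms $h_i \in S_2(\Gamma_0(N_i m), \chi_i)$ with $N_i \mid N^2$ and $\chi_i$ a character modulo $N^2/N_i$, as in the statement; including $\bar{h_i}$ accounts for the ramified-principal-series case at each prime. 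Writing $\widetilde{g_{\vec\varepsilon}}$ in this basis gives the claimed linear combination.

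The main obstacle — or at least the step requiring the most care — is the bookkeeping at the several primes $p_i$ simultaneously: one must verify that the local analysis of \cite{Atkin-Li} at each $p_i$ is genuinely independent (which it is, since the level structure and the relevant congruence conditions factor through the Chinese Remainder Theorem, so the local component of $\widetilde{g_{\vec\varepsilon}}$ at $p_i$ only depends on $\pi_{p_i}$ and on $\varepsilon_i$), and that the cross terms — twisting $g$ by a character $\chi = \prod \chi_i$ that is ramified at several $p_i$ and checking newness at each of them — produce exactly the levels $N_i \mid N^2$ and characters $\chi_i$ modulo $N^2/N_i$ asserted. Everything else, including the reduction to $\A$, is a routine transcription of the $N = p$ argument using the generalizations of Corollary~\ref{coro:sameHecke} and Proposition~\ref{prop:heckedecomposition} already noted in the text. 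Since the excerpt explicitly states ``all the results from the previous Section generalize trivially,'' I would keep the proof short, pointing to Theorem~\ref{thm:combination}, Theorem~\ref{thm:rajan} applied to $g$, and \cite{Atkin-Li} for the local newness criterion at each $p_i$.
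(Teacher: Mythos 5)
Your proposal is correct and follows essentially the same route as the paper: reduce to the space $\A$ of eigenforms agreeing with $g$ at $n \equiv 1 \bmod N$ (via the generalization of Corollary~\ref{coro:sameHecke}), characterize $\A$ by Rajan's theorem, and resolve the oldform/newform ambiguity prime by prime through the local type of $\pi_{p_i}$ and the Atkin--Li criteria, with the Chinese Remainder Theorem guaranteeing the local analyses are independent. The paper's own proof is just a terser version of this, explicitly constructing $\chi_i = \prod_p \chi_p$ and the attached newforms $h_i$ (noting that local characters extend to global ones over $\QQ$) and then citing the argument of Theorem~\ref{thm:combination}.
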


\begin{proof}
  We need to look at the local representations of our form $g$. Let
  $p$ be a prime dividing $N$. If $g$ is supercuspidal at $p$, then
  all of its twists by characters of conductor $p$ have the same level
  as $g$ (possibly with a character). If $g$ is a ramified principal
  series or Steinberg at $p$, then there exists a character $\chi_p$
  such that
  $g \otimes \chi_p \in
  S_2\left(\Gamma_0\left(\frac{N^2}{p}m,\chi_p^2\right)\right)$
  (note that this is true locally, but since the class number of $\QQ$
  is one, and there are no units, all local characters can be extended
  to global characters). We take $\chi_i= \prod_p \chi_p$ and $h_{i}$
  the new form attached to $g \otimes \chi_i$. Note that for each
  prime at which the representation is a ramified Principal Series we
  might have two choices of the character ($\chi_{p}$ ad
  $\bar{\chi_{p}}$) giving us also
  $\bar{\chi_i}= \prod_p \bar{\chi_p}$ and $\bar{h_{i}}$ the newform
  associated to $g \otimes \bar {\chi_{i}}$.  Now the same proof as in
  Theorem~\ref{thm:combination} applies.
\end{proof}

Using this Theorem we can also compute the Fourier expansion and define $G_{\vec{\varepsilon}}$ as a proper-normalization of $g_{\vec{\varepsilon}}$.  Now the coefficient field will be
$\QQ(\xi_{p_1},\ldots,\xi_{p_r})$, whose Galois group is isomorphic to
$\prod_i \FF_{p_i}^\times$ and the modular parametrization $\Phi_{N}^{\vec{\varepsilon}}$  map can be written in the form $\Phi_{\omega}(z_{\tau})$ where
\begin{equation}
  \label{eq:modularparamgeneral}
z_{\tau} = c \frac{2\pi i }{N}  \sum_{\sigma \in
  \Gal(\QQ(\xi_{N})/\QQ)} \int_{\infty}^{A_\sigma^{-1}\tau}
\sigma(G_{\vec{\varepsilon}})(z)dz.  
\end{equation}
Using the Fourier expansion of $G_{\vec{\varepsilon}}$, we can calculate the integral
numerically to arbitrary precision. Recall that the convergence of
such integral is exponential depending on the imaginary part of the
point on the upper half plane.

Summing up, we have obtained a modular parametrization
\begin{equation}
  \Phi^{\vec{\varepsilon}}_N: X_{ns}^{\vec{\varepsilon}}(N,m) \rightarrow E(\mathbb{C}) 
  \label{eq:modularparam}
\end{equation}
defined over $\mathbb{Q}$.
  We make the following observation about the Manin constant, which is supported by the evidence shown in the examples.
\begin{conj} 
The Manin constant belongs to $\ZZ [1/N]$.
\end{conj}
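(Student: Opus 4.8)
The Manin constant $c$ is a fixed rational number, so it suffices to prove $\val_{\ell}(c)\ge 0$ for every prime $\ell\nmid N$. Fix such an $\ell$; over $\ZZ_\ell$ the curve $X_{ns}^{\vec\varepsilon}(N,m)$ has good reduction if $\ell\nmid m$ and semistable reduction if $\ell\mid m$, so in either case the N\'eron model $\mathcal E$ of $E=\A_{G_{\vec\varepsilon}}$, its invariant differential $\omega_E$, and the minimal regular model of the Cartan curve over $\ZZ_\ell$ are as tractable as in the classical $\Gamma_0$ case. The plan is to transfer the question to $X_0(N^2m)$, where the corresponding Manin constant $c_0$ of the strong Weil curve $E_g$ attached to $g$ is an integer by the known results on the classical Manin constant, hence an $\ell$-unit.

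Both $X_{ns}^{\vec\varepsilon}(N,m)$ and $X_0(N^2m)$ are quotients of the full level-$Nm$ curve $X(Nm)$, say by $\pi_1$ and $\pi_2$, and the Chen--Edixhoven isogeny (Theorem~\ref{thm:chen} and its generalization) restricts on the $g$-isotypic parts to a $\QQ$-rational isogeny $\lambda_g\colon E\to E_g$. By multiplicity one the $\ell$-adic Tate modules of the two Jacobians agree on these parts as Hecke modules, so $\lambda_g$ can be chosen of degree prime to $\ell$, and then $\lambda_g^{*}\omega_{E_g}=u\,\omega_E$ with $u\in\ZZ_\ell^{\times}$. Combining this with $\phi_0^{*}\omega_{E_g}=c_0\,g(q)\tfrac{dq}{q}$ and $(\Phi_N^{\vec\varepsilon})^{*}\omega_E=c\,G_{\vec\varepsilon}(q)\tfrac{dq}{q}$, and pulling everything back to $X(Nm)$, the problem becomes that of comparing the differentials $\pi_1^{*}\!\big(G_{\vec\varepsilon}(q)\tfrac{dq}{q}\big)$ and $\pi_2^{*}\!\big(g(q)\tfrac{dq}{q}\big)$ inside the integral lattice $H^{0}\!\big(\mathcal X(Nm)_{/\ZZ_\ell},\Omega^{1}\big)$: since $G_{\vec\varepsilon}$ is properly normalized its $q$-expansion is $\ell$-integral, while $g$, being a rational newform, generates the $g$-part of the integral $q$-expansion lattice, so if this comparison introduces no $\ell$-denominator then $\val_{\ell}(c)\ge 0$ follows.

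The main difficulty lies exactly in that comparison. Because $G_{\vec\varepsilon}$ is a genuine $\QQ(\xi_N)$-linear combination of twists of $g$ and of the newforms associated to them (Theorem~\ref{thm:combinationgeneral}), $\pi_1^{*}G_{\vec\varepsilon}$ and $\pi_2^{*}g$ are not proportional on $X(Nm)$, so one cannot argue line by line but must show that the change of basis between the ``Cartan integral differential'' $G_{\vec\varepsilon}(q)\tfrac{dq}{q}$ and the classical one is $\ell$-integral with $\ell$-unit determinant. Equivalently, what is needed is a $q$-expansion principle for $X_{ns}^{\vec\varepsilon}(N,m)$ over $\ZZ[1/N]$ identifying $G_{\vec\varepsilon}(q)\tfrac{dq}{q}$, up to a unit, with a generator of $H^{0}(\mathcal E_{/\ZZ[1/N]},\Omega^{1})$. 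Establishing this uniformly for all $\ell\nmid N$ --- crucially including the semistable primes $\ell\mid m$, which is precisely what would upgrade the naive bound $\ZZ[1/Nm]$ to $\ZZ[1/N]$ --- amounts to carrying Edixhoven's and Abbes--Ullmo's arguments over to the Cartan (and mixed) integral models; the archimedean and $\ell$-adic estimates should transfer with little more than notational change, the genuinely new ingredient being the integral comparison of $G_{\vec\varepsilon}(q)\tfrac{dq}{q}$ with the pullback of the N\'eron differential.
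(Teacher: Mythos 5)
This statement is a \emph{conjecture} in the paper: the authors give no proof, only the one-line remark that it ``should follow from similar arguments as exposed in [Maz78].'' So there is no proof in the paper to compare against, and your text --- which you yourself label a sketch --- does not close the gap either. Your overall strategy (reduce to the classical Manin constant on $X_0(N^2m)$ via the Chen--Edixhoven isogeny, then invoke Mazur/Edixhoven/Abbes--Ullmo-style integral-model arguments) is the natural one and is consistent with the route the authors gesture at, but the step you flag as ``the genuinely new ingredient'' is in fact the entire content of the conjecture: an integral $q$-expansion principle on a model of $X_{ns}^{\vec\varepsilon}(N,m)$ over $\ZZ[1/N]$ identifying $G_{\vec\varepsilon}(q)\,\tfrac{dq}{q}$, up to an $\ell$-unit, with the pullback of the N\'eron differential. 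Without that, nothing is proved.

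Two further points in your sketch are asserted rather than argued and would need real work. First, you claim the Chen--Edixhoven isogeny $\lambda_g\colon E\to E_g$ ``can be chosen of degree prime to $\ell$'' because the Tate modules agree as Hecke modules; but agreement of Tate modules up to isogeny does not let you control the degree of a specific $\QQ$-rational isogeny, and passing to a different curve in the isogeny class changes the Manin constant by factors you would then have to track --- this is exactly the subtlety in the classical Manin constant literature, where integrality is known for the \emph{strong Weil} curve but the optimal quotient of $\Jac(X_{ns}^{\vec\varepsilon}(N,m))$ need not coincide with it (the paper's own example $49$a illustrates this). Second, the good/semistable reduction claims for the Cartan curve at $\ell\nmid N$ presuppose the very integral models whose construction and comparison with $X(Nm)_{/\ZZ_\ell}$ you are trying to use. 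In short: a reasonable roadmap, correctly identifying where the difficulty sits, but not a proof --- which is appropriate, since the paper itself only conjectures the statement.
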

This conjecture should follow from similar arguments as exposed in \cite{Maz78}.

\section{Heegner points on general Cartan non-split curves}
Let $E/\QQ$ be an elliptic curve and let $\Om=\<1,\omega>$ be an order
in an imaginary quadratic field $K$.  We say that the pair
$\left( E, \Om \right)$ satisfies the \emph{Cartan-Heegner
  hypothesis} if the following holds:

\begin{itemize}
\item The conductor of $E$ is $N^2m$ where $N$ is an odd square-free number and $\gcd(N,m)=1$.
\item The discriminant $d$ of $\Om$ is prime to $Nm$.
\item Every prime dividing $m$ is split in $\Om$.
\item Every prime dividing  $N$ is inert in $\Om$.
\end{itemize}

Note that $\Om$ satisfies the classical Heegner hypothesis at the
primes dividing $m$ but not at the primes dividing $N$, therefore, we will not be able to construct Heegner points on $X_0(N^2m)$.
Given a pair $(E,\Om)$ satisfying the Cartan-Heegner hypothesis we will use the
letters $N$ and $m$ to denote the factorization of the conductor of $E$ as in the definition. 

Recall that a matrix $M \in M_{2\times 2}(\ZZ)$ with
$\Tr(M)=\Tr(\omega)$ and $\det(M) = \Nm(\omega)$ gives an embedding
$\Om \hookrightarrow M_{2\times2}(\ZZ)$ given by sending $\omega$ to $M$.
A Heegner point on $X_{ns}^{\vec{\varepsilon}}(N,m)$ with endomorphism ring $\Om$ is a point
$\tau$ on the upper half plane which is fixed by a matrix $M \in M^{\vec{\varepsilon}}_{ns}(N) \cap M_{0}(m)$ satisfying the above conditions.

Let $H$ be the Hilbert class field of $\Om$. To a Heegner point $\tau$
one associates the elliptic curve
$E_{\tau} = \mathbb{C} /\left\langle 1, \tau\right\rangle$. The fact that $\tau$ is fixed by $M$ allows to associate to $\tau$ a pair of points in
$X^{\vec{\varepsilon}}_{ns}(N,m)(H)$ conjugate under
$\Gal(H/ \mathbb{Q}(j(E_{\tau})))$ (see \cite{Serre} Appendix $5$ for more
details).  A Heegner point on $E$ with endomorphism ring $\Om$ is
the image of a Heegner point with endomorphism ring $\Om$ in
$X^{\vec{\varepsilon}}_{ns}(N,m)(H)$ under the modular parametrization
(\ref{eq:modularparam}).

\subsection{Moduli interpretation}
In order to construct systems of Heegner points, it is also useful to have
a definition of Heegner points in terms of the moduli
interpretation. 
\begin{definition}
 A Heegner point on $X^{\vec{\varepsilon}}_{ns}(N,m)$ is a tuple $[\Om,[\mathfrak{a}],\id{m},\phi_{\alpha}]$ where $\Om$ is as before, $[\mathfrak{a}]$ is an element in
  $\Pic(\Om)$ which determines an elliptic curve $E_{\mathfrak{a}}=\Om/{\mathfrak{a}}$ with complex multiplication by $\Om$, $\id{m}$ is a cyclic ideal in
$\Om$ of norm $m$  and $\phi_{\alpha} \in \prod_{p \mid N} \End_{\FF_p}(E_{\mathfrak{a}}[p])$ is such that 
\begin{itemize}
\item $\phi_\alpha ^2$ is given by multiplication by $\vec{\varepsilon}$.
  \item There exists $\alpha \in \Om$ such that $\phi_\alpha$ is given by multiplication by $\alpha$ on each coordinate.
 \end{itemize}
 \label{definition:Heegnerpoint}
\end{definition}

\begin{rem}
  The element $\alpha$ is well defined modulo $N$, which is a product of inert primes of $\Om$, so we can just take $\alpha \in \Om/N$.
\end{rem}

 \begin{prop} 
\label{prop:heegnerrelations}
Let $[\Om,[\mathfrak{a}],\id{m},\phi_{\alpha}]$ be a Heegner point.
\begin{enumerate}
  \item If $\tau$ denotes complex conjugation, then $(\Om, [
    \mathfrak{a}],\id{m},\phi_{\alpha})^{\tau}=(\Om, [ {\mathfrak{a}}^{-1}],\overline{\id{m}},\phi_{-\alpha})$
  \item Let $[\mathfrak{b}]$ be a fractional ideal, and let
    $\sigma_{\mathfrak{b}} \in \Gal(H/K)$  be the Artin symbol associated to
    $[\mathfrak{b}]$. Then
   \[ (\mathcal{O}, [ \mathfrak{a}],\id{m},\phi_{\alpha})^{\sigma_{\mathfrak{b}}}=(\mathcal{O}, [ \mathfrak{a}{\mathfrak{b}}^{-1}],\id{m},\phi_{\alpha})
   \]
   \item If $p\mid N$, then $ \omega_{p} (\mathcal{O}, [ \mathfrak{a}],\id{m},\phi_{\alpha})= (\mathcal{O}, [ \mathfrak{a}],\id{m}, \phi_{-\alpha})$.
  \end{enumerate}
\end{prop}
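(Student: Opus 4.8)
The plan is to verify each of the three identities by tracking how the relevant objects transform, using the explicit moduli description in Definition~\ref{definition:Heegnerpoint}. For item (1), I would use that complex conjugation acts on the elliptic curve $E_{\mathfrak{a}} = \CC/\mathfrak{a}$ with CM by $\Om$ by sending it to $\CC/\bar{\mathfrak{a}}$, and that $[\bar{\mathfrak{a}}] = [\mathfrak{a}^{-1}]$ in $\Pic(\Om)$ (since $\mathfrak{a}\bar{\mathfrak{a}}$ is a principal ideal, namely $\Nm(\mathfrak{a})\Om$). The cyclic ideal $\id{m}$ of norm $m$ goes to $\overline{\id{m}}$ under conjugation, which is the relevant subgroup-of-order-$m$ statement. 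The subtle point is $\phi_\alpha \mapsto \phi_{-\alpha}$: here I would use that the CM action identifies $\alpha$ acting on $E_{\mathfrak{a}}[p]$ with $\bar\alpha$ acting on the conjugate curve, and since $p$ is inert in $\Om$ the element $\alpha$ is the (image in $\Om/N$ of the) square root of $\vec\varepsilon$; the two square roots are $\pm\alpha$, and complex conjugation is the nontrivial automorphism of $\Om/p \cong \FF_{p^2}$ over $\FF_p$, which negates the square root of a non-residue. That is precisely the identity $\bar\alpha \equiv -\alpha \bmod N$ that yields $\phi_{-\alpha}$.

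For item (2), this is the main theory of complex multiplication (Shimura reciprocity / the Main Theorem of CM), and I would simply cite it in the form: for $[\mathfrak{b}] \in \Pic(\Om)$ with Artin symbol $\sigma_{\mathfrak{b}} \in \Gal(H/K)$, one has $(\CC/\mathfrak{a})^{\sigma_{\mathfrak{b}}} \cong \CC/\mathfrak{a}\mathfrak{b}^{-1}$, compatibly with the CM action of $\Om$ and with level structure coming from $\Om$-ideals. Since all the data $\id{m}$, $\alpha$ live inside $\Om$ (or $\Om/N$, $\Om/m$) and $\sigma_{\mathfrak{b}}$ fixes $K$ hence commutes with the $\Om$-action on torsion, the level structures $\id{m}$ and $\phi_\alpha$ are carried to the ``same'' data on $\CC/\mathfrak{a}\mathfrak{b}^{-1}$; this gives the stated formula. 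The only thing to be careful about is that $\phi_\alpha$ is a collection of endomorphisms of $p$-torsion for $p \mid N$, all given by multiplication by a single $\alpha \in \Om/N$, and that this description is manifestly Galois-equivariant under $\Gal(H/K)$ because multiplication by $\alpha$ is defined over $K \subset H$.

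For item (3), I would recall from Remark~\ref{rem:involution} (in its general-level incarnation) that the involution $\omega_p$ acts on the moduli data by $(E,\psi,\phi_1,\dots,\phi_r) \mapsto (E,\psi,\dots,-\phi_i,\dots)$ at the $p$-th coordinate, leaving everything else fixed; applied to a Heegner tuple $(\Om,[\mathfrak{a}],\id{m},\phi_\alpha)$ with $\phi_\alpha$ given by multiplication by $\alpha$ in every coordinate, negating the $p$-th coordinate replaces $\alpha$ by $-\alpha$ there, i.e.\ gives $\phi_{-\alpha}$ (since the element $-\alpha \in \Om/N$ still acts as a single scalar in every coordinate after this single sign change — here one uses that only one prime $p \mid N$ is involved, or iterates). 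I expect items (2) and (3) to be essentially formal once the moduli dictionary is in place; the genuine content, and the step I would treat most carefully, is the $\phi_\alpha \mapsto \phi_{-\alpha}$ assertion in item (1), i.e.\ the fact that complex conjugation negates the chosen square root of $\vec\varepsilon$ modulo the inert primes dividing $N$.
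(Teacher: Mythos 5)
Your proposal is correct and follows essentially the same route as the paper, which simply invokes Serre's theory of complex multiplication for items (1) and (2) and Remark~\ref{rem:involution} for item (3). You supply the details the paper leaves implicit — most usefully the verification that complex conjugation, being the nontrivial automorphism of $\Om/p \cong \FF_{p^2}$ at each inert prime $p \mid N$, sends the chosen square root $\alpha$ of $\vec{\varepsilon}$ to $-\alpha$.
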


\begin{proof}
  The items $(1)$ and $(2)$ follow from \cite{Serre2} (since $\id{m}$
  and $\alpha$ are defined over $K$), while $(3)$ follows from Remark
  \ref{rem:involution}.
\end{proof}

Using the geometric interpretation of Hecke operators as described in
section~\ref{Heckegeometric} it is clear that we have the following
formula for Hecke operators (for $\ell$ relatively prime to $Nm$) acting on Heegner
points, analogous to the one given in \cite{Gr84} section $6$:

\begin{equation}
  \Hecke^{\vec{\varepsilon}}_{\ell} ( [\Om,\mathfrak{a},\id{m},\phi_{\alpha}] ) =  \sum_{ \mathfrak{a} /\mathfrak{b} \cong \ZZ/ \ell} (End(\mathfrak{b}),\mathfrak{b}, \id{m} \cdot End(\mathfrak{b}) \cap End(\mathfrak{b}) ,\phi_{  \alpha} )  .
\label{eq:heegnerrelations}
\end{equation}

\subsection{Heegner systems}
Fix an elliptic curve $E$ as before, and let $K$ be an imaginary
quadratic field whose maximal order satisfies the \emph{Cartan Heegner
  hypothesis}. Let $n$ be a positive integer prime to
$\Cond(E) \cdot \Disc(K)$. Let $\Om_{n}$ be the unique order in $K$ of
conductor $n$ and let $K_{n}$ be the corresponding Hilbert class
field. The order $\Om_n$ satisfies the \emph{Cartan Heegner
  hypothesis}, so, it gives rise to a set of Heegner points
$HP(n) \subset E(K_{n})$.

\begin{prop}
$1.$ Let $n$ be an integer and let $\ell$ be a prime number, both relatively prime to $\Cond(E) \cdot \Disc(K)$.
Consider any $P_{n \ell} \in HP(n \ell)$. Then, there exists points $P_{n} \in E(H_{n})$ and (when $ \ell \mid n$) $P_{n/\ell} \in HP(n/\ell)$ such that
\begin{itemize}
 \item If $\ell \nmid n$ is inert in $K$, 
\[ 
Tr_{K_{n \ell}/K_{n}} P_{n \ell}= a_{\ell} P_{n},
\]
\item If $\ell=\lambda \bar{\lambda} \nmid n$ is split in $K$, 
\[ 
Tr_{K_{n \ell}/K_{n}} P_{n \ell}= (a_{\ell}-\sigma_{\lambda}- \sigma^{-1}_{\lambda}) P_{n}.
\]

\item If $\ell \mid n$,
\[ 
Tr_{K_{n \ell}/K_{n}} P_{n \ell}= a_{\ell}P_{n}- P_{n/\ell}.
\]
\end{itemize}
where $a_{\ell}=1+\ell- card( \tilde{E}(\mathbb{F}_{\ell}))$. 

$2.$ There exists $\sigma
  \in \Gal(K_n/K)$ such that
\[
{P_{n}}^{\tau} \equiv -\sign(E,\QQ) {P_{n}}^{\sigma} \bmod{E(K_{n})_{tors}},
\]
where $\tau$ is complex conjugation and  $\sign(E,\QQ)$ is the root number of $E/\QQ$.
\label{prop:GZ}
\end{prop}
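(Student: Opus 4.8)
The plan is to prove the two parts separately, relying on the moduli-theoretic description of Heegner points from Definition~\ref{definition:Heegnerpoint}, the Galois action and Hecke formulas in Proposition~\ref{prop:heegnerrelations} and equation~\eqref{eq:heegnerrelations}, and the rationality of the modular parametrization $\Phi_N^{\vec\varepsilon}$.

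For part $1$, I would start from a Heegner point $P_{n\ell} = \Phi_N^{\vec\varepsilon}([\Om_{n\ell},\mathfrak a,\id m,\phi_\alpha])$ and compute the trace $\mathrm{Tr}_{K_{n\ell}/K_n}$ by summing over $\Gal(K_{n\ell}/K_n)$. Using item $(2)$ of Proposition~\ref{prop:heegnerrelations}, the Galois conjugates of $[\Om_{n\ell},\mathfrak a,\id m,\phi_\alpha]$ under this group are obtained by translating the ideal class $[\mathfrak a]$ by the classes in $\ker(\Pic(\Om_{n\ell})\to\Pic(\Om_n))$, keeping $\id m$ and $\phi_\alpha$ fixed. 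The key point is then the standard dictionary (as in \cite{Gr84}, \cite{Dar04} Chapter 3) relating this set of conjugates, after applying the norm map on the elliptic curve side, to the image of $[\Om_n,\ldots]$ under a Hecke correspondence at $\ell$: concretely, the sum over $\mathfrak a/\mathfrak b\cong\ZZ/\ell$ in \eqref{eq:heegnerrelations} splits according to whether $\ell$ is inert, split, or ramified in $\Om_n$ (equivalently, whether $\ell\nmid n$ inert, $\ell\nmid n$ split, or $\ell\mid n$). In the inert case all $\ell+1$ sublattices give points lying over $K_n$ and the compatibility of $\Phi_N^{\vec\varepsilon}$ with $\Hecke_\ell^{\vec\varepsilon}=T_\ell$ (acting as $a_\ell$ on $E$) yields $a_\ell P_n$; in the split case two of the sublattices correspond to $\sigma_\lambda P_n$ and $\sigma_\lambda^{-1}P_n$ and must be subtracted; in the case $\ell\mid n$ one sublattice descends to $P_{n/\ell}$ and must be subtracted. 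The crucial observation that makes this work in our setting is that the extra data $\phi_\alpha$ only depends on $\alpha \bmod N$ and $\ell$ is prime to $N$, so the Hecke action at $\ell$ leaves $\phi_\alpha$ untouched (as noted after \eqref{eq:heegnerrelations}); hence the argument is formally identical to the classical $X_0$ case.

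For part $2$, I would use item $(1)$ of Proposition~\ref{prop:heegnerrelations}: complex conjugation $\tau$ sends $[\Om_n,\mathfrak a,\id m,\phi_\alpha]$ to $[\Om_n,\mathfrak a^{-1},\overline{\id m},\phi_{-\alpha}]$. Now I apply the Atkin--Lehner involutions $\omega_p$ for $p\mid N$ (item $(3)$, which flips each $\phi_\alpha\mapsto\phi_{-\alpha}$) together with the Atkin--Lehner involutions at the primes dividing $m$ (coming from $\overline{\id m}$ versus $\id m$), so that the composite $W$ of all these involutions sends $[\Om_n,\mathfrak a^{-1},\overline{\id m},\phi_{-\alpha}]$ to something of the form $[\Om_n,\mathfrak a^{-1}\mathfrak c,\id m,\phi_\alpha]$ for an appropriate ideal $\mathfrak c$, i.e. a Galois conjugate $\sigma$ of the original Heegner point by item $(2)$. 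On the elliptic curve side each $\omega_p$ (and each Atkin--Lehner at $p\mid m$) acts as $\pm1$ with sign equal to the corresponding local root number $w_p(E)$, and the product of all these local signs, times the sign $-1$ coming from the functional equation normalization, equals $-\sign(E,\QQ)$ (here one uses that for the primes $p\mid N$ with $p^2\|N$ the local root number contributes $+1$, and that the archimedean and remaining places are accounted for in $\sign(E,\QQ)$). Combining, $P_n^\tau = W(P_n)\cdot(\text{signs}) = -\sign(E,\QQ)\,P_n^\sigma$ modulo torsion.

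The main obstacle I expect is bookkeeping the Atkin--Lehner signs correctly in part $2$: one must verify that the composite involution $W$ used to convert $(\mathfrak a^{-1},\overline{\id m},\phi_{-\alpha})$ back into a point with the standard data $(\id m,\phi_\alpha)$ really does land on a Galois conjugate (not merely an isomorphic point), and that the product of its eigenvalues on the optimal quotient $E$ matches $-\sign(E,\QQ)$; this requires knowing the eigenvalue of $G_{\vec\varepsilon}$ under each $\omega_p$, which by Remark~\ref{rem: evenandodd} and the local representation theory at $p$ is determined by whether $g_\varepsilon$ lies in $S_2(\Gamma_{ns}^{\varepsilon+})$ or $S_2(\Gamma_{ns}^{\varepsilon-})$, together with the classical Atkin--Lehner eigenvalues of $g$ at the primes dividing $m$. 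A secondary subtlety is checking the reduction/descent statement in the $\ell\mid n$ subcase of part $1$, namely that one of the $\ell$ sublattices of $\mathfrak a$ at level $n\ell$ genuinely corresponds, after the norm map, to a Heegner point $P_{n/\ell}\in HP(n/\ell)$ rather than just to a point defined over $K_n$ — this is the usual ``old form'' phenomenon and follows from tracking conductors of the orders involved.
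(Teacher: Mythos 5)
Your proposal follows exactly the route the paper takes: its proof of Proposition~\ref{prop:GZ} consists precisely of invoking Proposition~\ref{prop:heegnerrelations} and equation~(\ref{eq:heegnerrelations}) and observing that the classical arguments of Gross (\cite{Gr89}, Prop.~3.7 and 5.3) and Darmon (\cite{Dar04}, \S3.4) then apply verbatim, the key point being --- as you note --- that the extra datum $\phi_\alpha$ is insensitive to the Hecke action at $\ell \nmid N$ and is handled by the involutions $\omega_p$ under complex conjugation. The only slip is your parenthetical claim that the local root number at the primes $p$ with $p^2\,\|\,\Cond(E)$ is $+1$ (it need not be); but you effectively retract this in your closing paragraph by correctly identifying the relevant sign as the $\omega_p$-eigenvalue of $G_{\vec\varepsilon}$, i.e.\ whether it lies in $S_2(\Gamma_{ns}^{\varepsilon+})$ or $S_2(\Gamma_{ns}^{\varepsilon-})$, so the argument stands.
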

\begin{proof}
  From Proposition~\ref{prop:heegnerrelations},
  equation~(\ref{eq:heegnerrelations}) and the discussion in between,
  the result follows quite formally. See for example \cite{Gr89}
  Proposition 3.7 and Proposition 5.3 or \cite{Dar04} section 3.4 and
  \cite{GZ} section II.1.
\end{proof}
\begin{definition}
  A \emph{Heegner system} attached to $(E,K)$ is a collection of
  points $P_n \in E(K_n)$ (indexed by positive integers $n$ relatively prime to $\Cond(E) \cdot \Disc(K)$) which satisfies the conditions of the previous Proposition.
  \end{definition}

  If $E$ is a rational elliptic curve and $K$ satisfies the
  Cartan-Heegner hypothesis, Proposition~\ref{prop:GZ}
  proves that the set of Heegner points form a Heegner
  system. Given a Heegner system, Kolyvagin's machinery works and we
  get the following result:
\begin{thm}
 Let $\{P_{n}\}$ be the Heegner system attached to $(E,K)$ as constructed above, where the elliptic curve does not have complex multiplication. Define $P_{K}=Tr_{K_{1}/K} P_{1} \in E(K)$.
If $P_{K}$ is non-torsion then the following are true
 \begin{itemize}
  \item The Mordell-Weil group $E(K)$ is of rank one.
  \item The Shafarevich-Tate group of $E/K$ is finite.
 \end{itemize}
 \label{thm:Kolyvagin}
\end{thm}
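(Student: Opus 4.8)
The plan is to reduce Theorem~\ref{thm:Kolyvagin} to Kolyvagin's Euler system argument, exactly as in the classical case, using the Heegner system constructed in Proposition~\ref{prop:GZ}. The key point is that nothing in Kolyvagin's method is specific to Heegner points coming from $X_0(N)$: the only inputs are (i) a collection of points $P_n \in E(K_n)$ satisfying the norm-compatibility relations under $\Tr_{K_{n\ell}/K_n}$ for primes $\ell$, (ii) the behavior of $P_n$ under complex conjugation relative to the action of $\Gal(K_n/K)$, and (iii) the congruence, for $\ell$ inert, between $\Tr_{K_{n\ell}/K_n} P_{n\ell}$ and the Frobenius action on $P_{n\ell}$ modulo $\ell$. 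All of these are packaged into our Heegner system via Proposition~\ref{prop:GZ}, whose relations are formally identical to the ones appearing in \cite{Gr89} and \cite{Dar04}.

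The steps I would carry out are as follows. First, I would invoke Proposition~\ref{prop:GZ} to confirm that $\{P_n\}$ is a Heegner system attached to $(E,K)$ in the precise sense of the preceding definition, so that the norm relations and the complex-conjugation relation hold. Second, following \cite{Gr89} (or Chapter~10 of \cite{Dar04}), I would form the Kolyvagin derivative classes: for a squarefree product $n$ of Kolyvagin primes $\ell$ (primes inert in $K$, prime to $\Cond(E)\Disc(K)$, and satisfying the usual congruence conditions making the mod-$p^k$ reduction behave well), one applies the derivative operators $D_\ell = \sum_{i=1}^{\ell} i\,\sigma_\ell^i$ to $P_n$ and checks, using the norm relations, that the image of $D_n P_n$ in $E(K_n)/p^k E(K_n)$ is $\Gal(K_n/K)$-invariant, hence descends to a class $\kappa_n \in H^1(K, E[p^k])$. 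Third, I would establish the local properties of $\kappa_n$: it is unramified (even trivial in the relevant local cohomology) away from $n$, and at a prime dividing $\ell \mid n$ its ramified part is controlled by the reduction $\tilde E(\FF_\ell)$ together with the point $P_{n}$, via the congruence in Proposition~\ref{prop:GZ}(1). Fourth, with these classes in hand, the standard Kolyvagin bound on the Selmer group $\mathrm{Sel}_{p^k}(E/K)$ follows by the usual global duality / Chebotarev argument, provided $P_K = \Tr_{K_1/K} P_1$ is non-torsion, which forces $\mathrm{rank}\, E(K) = 1$ and $\Sha(E/K)[p^\infty]$ finite for all $p$; combined with the complex-conjugation relation (which pins down the sign of the eigenspace in which $P_K$ lives) one concludes the full statement.

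The one genuinely new verification, relative to the classical exposition, is that the Galois representation hypotheses underlying the Chebotarev step are available. This is where the hypothesis that $E$ has no complex multiplication enters: by Serre's open image theorem the mod-$p$ representation $\bar\rho_{E,p}$ is surjective for all but finitely many $p$, and more generally one has enough independence of the representations $E[p^k]$ from the cyclotomic field $K_n$ to run Kolyvagin's argument for a suitable prime $p$. One also needs the primes $p$ chosen so that $E(K)[p] = 0$ and the relevant cohomology groups are cyclic; these are finitely many conditions excluding a finite set of $p$, and since the conclusion (rank one, finite $\Sha$) only needs to be checked $p$ by $p$ after fixing one good $p$ to bound the rank, this causes no difficulty.

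The main obstacle I anticipate is not in the Euler-system formalism itself — that is purely formal once Proposition~\ref{prop:GZ} is in place — but in making sure that the derivative classes $\kappa_n$ are genuinely defined over $K$ and not merely over $\QQ(j(E_\tau))$ or some intermediate field. In our setting the Heegner points live a priori in $X^{\vec\varepsilon}_{ns}(N,m)(H_n)$ via a pair of conjugate points, and the modular parametrization~(\ref{eq:modularparam}) is only rational after averaging over the conjugate cusps; one must check that the field of definition of $P_n$ is exactly $K_n$ and that the $\Gal(K_n/K)$-action matches the Artin-symbol action recorded in Proposition~\ref{prop:heegnerrelations}(2). Once that compatibility is confirmed, the rest of the proof is a citation of \cite{Gr89}, \cite{Dar04}, or the original Kolyvagin papers, applied verbatim to our Heegner system.
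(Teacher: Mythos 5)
Your proposal is correct and follows essentially the same route as the paper: the paper's entire proof is a citation of Theorem~10.1 of \cite{Dar04}, which states precisely that any Heegner system (in the axiomatic sense verified here by Proposition~\ref{prop:GZ}) with $P_K$ non-torsion yields rank one and finite Shafarevich--Tate group via Kolyvagin's argument. Your additional unpacking of the derivative classes and the field-of-definition check is a more detailed rendering of the same reduction, not a different approach.
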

\begin{proof}
 See Theorem 10.1 of \cite{Dar04}.
\end{proof}
Furthermore, we have the following crucial relation with L-series derivatives:
\begin{thm}[Gross-Zagier-Zhang]
  The point $P_1$ is non-torsion if and only if $L'(E/K,1) \neq 0$.
  \label{thm:GZZ}
\end{thm}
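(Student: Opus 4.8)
The plan is to deduce the Gross--Zagier--Zhang formula for $E/K$ from the known Gross--Zagier--Zhang machinery applied to the modular parametrization $\Phi^{\vec{\varepsilon}}_N$ constructed in Section~\ref{section:generallevels}, in close analogy with the classical case treated in \cite{GZ} and its generalizations by Zhang. The key point is that the whole apparatus developed in this article --- the rational modular parametrization $\Phi^{\vec{\varepsilon}}_N : X_{ns}^{\vec{\varepsilon}}(N,m) \to E$, the Heegner system $\{P_n\}$ with the compatibility relations of Proposition~\ref{prop:GZ}, and the Chen--Edixhoven isogeny of Theorem~\ref{thm:chen} --- packages the Cartan situation so that it is formally identical to the Shimura/modular curve situation in which Gross--Zagier--Zhang is already established. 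Concretely, one wants to show that the canonical height of $P_1 = \mathrm{Tr}_{K_1/K}\,P_1$ (or rather of its projection to the $g$-isotypic part) is, up to an explicit nonzero constant, equal to $L'(E/K,1)$, and hence $P_1$ is non-torsion if and only if $L'(E/K,1)\neq 0$.

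The steps, in order, would be: First, observe that since the root number of $E/K$ is $-1$ (this is part of the standing hypotheses, and is what makes the Cartan-Heegner hypothesis the right one --- see the discussion in the Introduction), the $L$-function $L(E/K,s)$ vanishes at $s=1$, so $L'(E/K,1)$ is the leading term. Second, invoke the Gross--Zagier--Zhang formula in the form valid for the curve $X_{ns}^{\vec{\varepsilon}}(N,m)$: this is exactly the setting of Zhang's work on Gross--Zagier for $\mathrm{GL}_2$ over totally real fields and Shou-Wu Zhang's and Yuan--Zhang--Zhang's general framework, which applies to Heegner points on any modular or Shimura curve once one has (a) a newform $g$ with $L(g/K,1)=0$, (b) a Heegner point $P_1$ of the appropriate level, and (c) the modular parametrization; the Chen--Edixhoven isogeny guarantees that the abelian variety $\A_{G_{\vec{\varepsilon}}}$ is isogenous to $E$ and that all Hecke operators (prime to $N$) are intertwined, so the relevant automorphic period on the Cartan side matches the one on the $\Gamma_0(N^2m)$-side up to the isogeny and Manin constants, which are rational and nonzero. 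Third, transport the resulting identity through the isogeny $\A_{G_{\vec{\varepsilon}}} \sim E$: an isogeny multiplies canonical heights by a nonzero rational factor and sends non-torsion points to non-torsion points, so the equivalence ``$P_1$ non-torsion $\iff L'(E/K,1)\neq 0$'' is preserved. One should also note the telescoping at the bottom of the Heegner system: $P_1 = \mathrm{Tr}_{K_1/K}P_1$ is the ``basic'' Heegner point attached to the maximal order $\Om_K$, which is precisely the point for which the Gross--Zagier--Zhang height formula is stated.

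The main obstacle --- and the reason this is stated as a cited theorem rather than proved from scratch --- is verifying that the Cartan non-split curve $X_{ns}^{\vec{\varepsilon}}(N,m)$ genuinely falls within the scope of an existing Gross--Zagier--Zhang theorem, since the classical references (\cite{GZ}, and Zhang's papers) are phrased either for $X_0(N)$ or for Shimura curves attached to quaternion algebras, not for Cartan curves. The cleanest route is \emph{not} to re-derive the height computation but to use the Chen--Edixhoven isogeny (Theorem~\ref{thm:chen}) together with the fact, established via our moduli interpretation, that the Heegner points on $X_{ns}^{\vec{\varepsilon}}(N,m)$ map under a correspondence to Heegner-type cycles on $X_0(N^2m)^{N^2\text{-new}}$ of the correct conductor, so that the already-known formula for $X_0(N^2m)$ (in the non-split/supercuspidal local situation, as worked out in \cite{ZH}) applies. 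In other words, the content of the theorem is: (i) $L(E/K,s)$ has odd order of vanishing; (ii) the Cartan Heegner point we constructed is, up to isogeny, the same algebraic point that appears in Zhang's formula; (iii) apply that formula. I would flag step (ii) --- the precise matching of the Cartan Heegner point with the classical Heegner cycle, including keeping track of the averaging over cusps in the definition of $\Phi^{\vec{\varepsilon}}_N$ and the rationality field $\QQ(\xi_N)$ --- as the delicate bookkeeping, but it is purely formal given Proposition~\ref{prop:heegnerrelations}, Proposition~\ref{prop:GZ}, and Theorem~\ref{thm:chen}.

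\begin{proof}
See \cite{ZH} (and \cite{GZ} for the classical case). As explained above, the root number hypothesis forces $L(E/K,\cdot)$ to vanish at the center, and the Heegner point $P_1$ constructed via $\Phi^{\vec{\varepsilon}}_N$ corresponds, through the Hecke-equivariant Chen--Edixhoven isogeny of Theorem~\ref{thm:chen}, to the Heegner-type point appearing in the Gross--Zagier--Zhang formula for the relevant quadratic twist data at the primes dividing $N$; the formula then gives that its canonical height is a nonzero multiple of $L'(E/K,1)$.
\end{proof}
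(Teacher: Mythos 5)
Your overall instinct---that this is a citation to Zhang's general Gross--Zagier machinery rather than a computation from scratch---is right, and your first step (root number $-1$ forces $L(E/K,1)=0$, so $L'(E/K,1)$ is the leading term) is fine. But the specific mechanism you propose for bringing the Cartan curve into the scope of the known theorem has a genuine gap. You suggest transporting the formula from the $X_0(N^2m)$ side through the Chen--Edixhoven isogeny, matching the Cartan Heegner point with ``the classical Heegner cycle'' on $X_0(N^2m)^{N^2\text{-new}}$ and then invoking ``the already-known formula for $X_0(N^2m)$.'' Under the Cartan-Heegner hypothesis the primes dividing $N$ are \emph{inert} in $\Om$, so there are no classical Heegner points of the relevant conductor on $X_0(N^2m)$ at all---this is the entire raison d'\^etre of the paper. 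The Chen--Edixhoven isogeny is a Hecke-equivariant isogeny of Jacobians; it does carry the class of the Cartan Heegner divisor to \emph{some} class on $\Jac(X_0(N^2m))^{N^2\text{-new}}$, but that class is not a classical Heegner divisor, and no Gross--Zagier formula is available for it on that side. So step (ii) of your outline is not ``purely formal'': there is nothing on the $\Gamma_0(N^2m)$ side to match against, and Propositions~\ref{prop:heegnerrelations} and~\ref{prop:GZ} (which only give the Galois/Hecke compatibilities of the Heegner system) do not supply the missing height identity.

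The route the paper actually takes avoids the isogeny entirely: Zhang's theorem in \cite{Zh2} is stated for CM points on quotients of $\mathcal{H}$ by unit groups of \emph{orders of level $N$} in a quaternion algebra, and his construction of the local order at an inert prime $p$ with $p^2\mid\mid N$ (equation $(6.3)$, page 15 of \cite{Zh2}) produces exactly the Cartan non-split order. Hence the Cartan non-split curve is already one of the curves to which Zhang's Theorem 6.1 applies verbatim, and the conclusion is a direct relation between $L'(E/K,1)$ and the N\'eron--Tate height of the projection of $P_1$ to the $f$-isotypical component of the Jacobian of $X_{ns}^{\vec{\varepsilon}}(N,m)$ itself. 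The Chen--Edixhoven isogeny is only needed elsewhere (to know that $E$ occurs as a quotient of that Jacobian), not to establish the height formula. If you want to keep your write-up, replace the transfer-through-the-isogeny step by the observation that Zhang's order at the inert primes \emph{is} the Cartan non-split order, and cite \cite{Zh2} rather than \cite{ZH} (the latter is used in this paper only for the root number computation).
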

\begin{proof}
  This is part of Zhang's result in \cite{Zh2}. Note that his choice of order of level $N$ 
  in $(6.3)$ (page 15) coincides with the Cartan
  non-split one. Then, Theorem 6.1 applies, giving a relation between
  the L-series derivative and the Neron-Tate height pairing (inside
  the Jacobian) of the projection of the Heegner point to the
  $f$-isotypical component.
\end{proof}
\begin{rem}
  Zhang's formula is proven for points on the Jacobian of the Cartan
  non-split curve. To get some version of the Birch and
  Swinnerton-Dyer conjecture in this context, the Manin constant and
  the degree of the modular parametrization need to be computed for
  such curve. Unfortunately, no such formulas are known.
\end{rem}
\section{Computational digression}
\subsection{Computing eigenforms}
\label{section: Computational}
Let $g \in S_2(\Gamma_0(N^2m)$ be an eigenform with rational
eigenvalues. We need to compute the Fourier expansion of
$g_{\vec{\varepsilon}}$.
\begin{lemma}
We have $\Gamma_{ns}^{\vec{\varepsilon}}(N,m)/ (\Gamma(N) \cap \Gamma_{0}(m)
  ) \cong \prod_{p \mid N} \ZZ /(p+1) $.
\label{lemma:invariance}
\end{lemma}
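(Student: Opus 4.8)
The plan is to reduce the claimed isomorphism to a purely local computation, prime by prime, using the Chinese Remainder Theorem, and then identify each local factor $C_{ns}^{\varepsilon_p}(p)^{\times}/(\text{scalars})$ with a cyclic group of order $p+1$. The key observation is that $\Gamma_{ns}^{\vec\varepsilon}(N,m)$ and $\Gamma(N)\cap\Gamma_0(m)$ differ only in their behaviour modulo the primes $p\mid N$: both are subgroups of $\SL_2(\ZZ)$ cut out by congruence conditions, and the $\Gamma_0(m)$-condition at primes dividing $m$ is common to both. So the quotient only ``sees'' the primes $p_i\mid N$.

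First I would set up the reduction map. Consider the reduction homomorphism $\SL_2(\ZZ)\to\prod_{p\mid N}\SL_2(\FF_p)$, which is surjective by the Chinese Remainder Theorem together with surjectivity of $\SL_2(\ZZ)\to\SL_2(\FF_p)$ (strong approximation, or the elementary argument used already in Lemma~\ref{lemma:conjugationrepresentative}). Restricting to $\Gamma_{ns}^{\vec\varepsilon}(N,m)$, the image lands in $\prod_{p\mid N}(C_{ns}^{\varepsilon_p}(p)\cap\SL_2(\FF_p))$, i.e.\ in the group of \emph{determinant-one} elements of each Cartan non-split ring; call this $\overline{\Gamma}^{\varepsilon_p}_p$. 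One needs surjectivity of $\Gamma_{ns}^{\vec\varepsilon}(N,m)\to\prod_{p\mid N}\overline{\Gamma}^{\varepsilon_p}_p$: given target data at the primes $p\mid N$, complete it to an element of $\prod_{p\mid N}\SL_2(\FF_p)$, impose the trivial (identity) condition at primes dividing $m$, and lift to $\SL_2(\ZZ)$; the lift automatically lies in $\Gamma_{ns}^{\vec\varepsilon}(N,m)$. The kernel of this map is exactly $\{\gamma\in\SL_2(\ZZ): \bar\gamma=I \bmod p \text{ for all }p\mid N,\ \gamma\in\Gamma_0(m)\}=\Gamma(N)\cap\Gamma_0(m)$. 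Hence
\[
\Gamma_{ns}^{\vec\varepsilon}(N,m)/(\Gamma(N)\cap\Gamma_0(m))\;\cong\;\prod_{p\mid N}\overline{\Gamma}^{\varepsilon_p}_p.
\]

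Next I would compute each local factor. We have the identification $(C_{ns}^{\varepsilon_p}(p))^\times\cong\FF_{p^2}^\times$ stated in the paper, and under it the determinant map corresponds to the norm $\Nm_{\FF_{p^2}/\FF_p}:\FF_{p^2}^\times\to\FF_p^\times$. Therefore $\overline{\Gamma}^{\varepsilon_p}_p=\ker(\Nm)$, which is the unique cyclic subgroup of $\FF_{p^2}^\times$ of order $(p^2-1)/(p-1)=p+1$. (Equivalently, it is the group of norm-one elements, the ``non-split torus's'' analogue of the circle group.) Assembling, $\prod_{p\mid N}\overline{\Gamma}^{\varepsilon_p}_p\cong\prod_{p\mid N}\ZZ/(p+1)\ZZ$, which is the assertion. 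Note the answer is independent of the choices $\varepsilon_p$, as it must be.

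The main obstacle — really the only non-formal point — is verifying the surjectivity of $\Gamma_{ns}^{\vec\varepsilon}(N,m)\to\prod_{p\mid N}\overline{\Gamma}^{\varepsilon_p}_p$, i.e.\ that one can simultaneously prescribe the reductions modulo each $p\mid N$ (inside the prescribed Cartan subgroups) \emph{and} the reduction modulo $m$ (to lie in the upper-triangular mod-$m$ subgroup) by a single integral matrix of determinant one. This is handled by the same strong-approximation argument underlying Lemma~\ref{lemma:conjugationrepresentative}, applied to the composite modulus $N^2m$ (or just $Nm$): $\SL_2(\ZZ)\to\SL_2(\ZZ/Nm)$ is surjective, and $\SL_2(\ZZ/Nm)\cong\prod_{p\mid Nm}\SL_2(\ZZ/p^{v_p(Nm)})$ by CRT, so one prescribes the component at each $p\mid N$ to be the desired Cartan element, the component at each $p\mid m$ to be (say) the identity, and lifts. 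Everything else is bookkeeping with the CRT and the cyclic structure of $\FF_{p^2}^\times$.
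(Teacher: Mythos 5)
Your proof is correct and follows essentially the same route as the paper's: reduce modulo each $p \mid N$, identify the determinant-one part of the Cartan non-split ring with the norm-one subgroup of $\FF_{p^2}^\times$ (cyclic of order $p+1$), and glue the local factors together via the Chinese Remainder Theorem and surjectivity of reduction from $\SL_2(\ZZ)$. You spell out the surjectivity and the $\Gamma_0(m)$ bookkeeping that the paper leaves implicit in the phrase ``follows from the Chinese Remainder Theorem,'' but the substance is identical.
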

\begin{proof}
  The morphism
  $\Gamma^{\varepsilon}_{ns}(p) / \Gamma(p) \to
  \FF_p[\sqrt{\varepsilon}]$
 given by $\left(\begin{smallmatrix} a & b \\
      \varepsilon b & a\end{smallmatrix} \right) \to a+b
  \sqrt{\varepsilon}$
  sends $\Gamma^{\varepsilon}_{ns}(p) / \Gamma(p)$ to
  $\{ \alpha \in \FF_{p^2}^\times\, :\, \Nm(\alpha)=1\}$, which is
  isomorphic to $\ZZ /(p+1)$. The result follows from the Chinese
  Remainder Theorem.
\end{proof}
To compute the Fourier expansion of $g_{\vec{\varepsilon}}$ we proceed as follows:
\begin{enumerate}
\item We compute the local type at each prime dividing $N$. This can
  be done either by looking at the reduced curve and the field where
  it gets semi-stable reduction or by considering twists, as in
  \cite{Pacetti}. Using the local type information, we compute the
  newforms of smaller level that appear in
  Theorem~\ref{thm:combinationgeneral}. If there are some ramified
  principal series primes, one can compute the form $h$ from the
  elliptic curve (see Appendix~\ref{app:1}).
\item Once we have all the forms appearing in
  Theorem~\ref{thm:combinationgeneral}, we are led to compute the
  linear combination. We take a formal linear combination with
  variables $x_i$.  The forms appearing are invariant under
  $\Gamma(N) \cap \Gamma_{0}(m)$, so we have to impose invariance
  under
  $\Gamma_{ns}^{\vec{\varepsilon}}(N,m)/ (\Gamma(N) \cap
  \Gamma_{0}(m))$.
  Using Lemma~\ref{lemma:invariance} we get a set
  $\{\alpha_i\}_{i}$ of generators for the quotient. Imposing invariance under
  $\alpha_i$ (via evaluating the linear combination at some point in
  $\mathcal{H}$) gives a linear equation on the $x_i$'s (with complex
  coefficients). Asking invariance for the whole set of generators, we
  get a linear system, whose solution set $\A$ are the forms in
  $S_2(\Gamma_{ns}^{\vec{\varepsilon}}(N,m))$ with the same
  eigenvalues as $g$ for $n \equiv 1 \pmod N$.
\end{enumerate}
By Theorem \ref{thm:rajan}, $\A$ is the set of twists of the newform $g$ by quadratic characters
 $\chi$ modulo $N$ which are newforms of  level $N^2m$. 
This implies that the space $\A$ has dimension $2^d$, where $d$ is the number of primes dividing $N$ where the local representation is supercuspidal or a principal series (minimal by quadratic twist).
We need to pin
down $g_{\vec{\varepsilon}}$. Here is how to do it. Let $p \mid N$ be
a prime number and let $\varkappa_p$ be the quadratic character modulo
$p$.

\medskip

\noindent {\bf Fact 1:} If $\pi_{p}$ is supercuspidal, let
$\epsilon_p$ denote the local sign at $p$. If $\epsilon_p =1$ then $g$
can be written as a linear combination such as in
Theorem~\ref{thm:combinationgeneral} where only twists of $g$ by characters
with even $p$-part are involved, while for $g \otimes \varkappa_p$
only twists of $g$ by characters with odd $p$-part are involved. If
$\epsilon_p=-1$, the situation is the opposite one.

\noindent {\bf Fact 2:} If $\pi_{p}$ is Principal Series, let $q$ be a
non-square modulo $p$. The operator
$\Hecke^{\varepsilon}_{q}= T_{q} \upsilon^{\varepsilon}_{q}$  acts as $\lambda_{q}$ on the subspace spanned by $g_{\vec{\varepsilon}}$ and as $-\lambda_{q}$ on
the subspace spanned by $({g \otimes \varkappa_{p}})_{\vec{\varepsilon}}$.

\medskip

\noindent{\bf Proof of Fact 1:} Recall from Remark~\ref{rem: evenandodd} that if $\epsilon_p=1$
(resp. $\epsilon_p=-1$) then only twists of $g$ with even $p$-part
(resp. odd $p$-part) are in the sum. By Corollary $3.3$ of
\cite{Pacetti}, the local sign at $p$ changes while twisting $g$ by
$\varkappa_p$ like $-\left(\frac{-1}{p}\right) = -
\varkappa_p(-1)$. Therefore, the variation of the sign at $p$ of the
characters involved in the combination for $g$ and $g \otimes
\varkappa_p$ are different.
\medskip

Each condition halves the dimension and  altogether 
determine $g_{\vec{\varepsilon}}$ up to a constant. Note that the
solution is computed using real arithmetic, so from an approximate
solution we first normalize it such that the first Fourier coefficient
is $1$ (so all coefficients lie in $\QQ(\xi_N)$) and then we
proper-normalize it using an explicit version of Hilbert's 90
Theorem. 
Finally, recall that if  $\gcd(n,N)=1$, the $n$-th coefficient $b_{n}$ of $\widetilde{G_{\vec{\varepsilon}}}$ satisfies
\begin{equation}
\label{eq:coefficientrelation}
b_{n}=\lambda_{n} \sigma_{n^{-1}}(b_{1}).
\end{equation}
Thus, we can obtain the exact Fourier expansion once we have found $b_{1} \in \QQ(\xi_{N})$ and 
 the coefficients at the various $p^{\alpha}_{i}$.
\subsection{Computing Heegner points}
Let $\{\mathfrak{a}_{i}\}$ be a set of representatives of the Class
group of $\Om$ and let $\omega_{i} \in \mathcal{H}$ be such that
$\mathfrak{a}_{i}= \left\langle 1, \omega_{i} \right\rangle$. Let
$M_{\omega_{i}}$ be the set of matrices in $M_{2}(\ZZ)$ that fixes
$\omega_{i}$, which is an order isomorphic to $\Om$. Then,
$M_{\omega_i}$ contains a matrix $N_{i}$ satisfying
$\Tr(M) = \Tr(\omega)$ and $\det(M)=\Nm(\omega)$. 

\medskip

\noindent{\bf Claim:} there exists $A_{i} \in \SL_{2}(\ZZ)$ such that
$A_{i}N_{i} {A_{i}}^{-1} \in M^{\vec{\varepsilon}}_{ns}(N) \cap
M_{0}(m)$.

\smallskip

Then the point $\tau_{i}=A_{i} \omega_{i}$ is a Heegner point on
$X_{ns}^{\vec{\varepsilon}}(N,m)$ with endomorphism ring $\Om$ as wanted.

The matrices $A_i$ are computed in the following way:
\begin{itemize}
\item At a prime $p$ dividing $m$, we chose $A_i^{(p)}$ modulo
  $p^{\val_p(m)}$ of determinant one, taking $N_i$ to an upper
  triangular matrix. This can be done, since the roots of the
  characteristic polynomial of $N_i$ are in $\FF_p$ (since every prime
  that divides $m$ splits in $\Om$), so we just take a basis for the
  Jordan form.
\item At a prime $p$ dividing $N$, since $p$ is inert in $K$, the
  characteristic polynomial of $N_{i}$ is irreducible in
  $\FF_p[x]$. If $N_i = \left(\begin{smallmatrix} \alpha & \beta \\
      \gamma & \delta \end{smallmatrix} \right)$, then we want the
  matrix $A_i$ to satisfy \[
A_i \left(\begin{smallmatrix} \alpha & \beta \\
      \gamma & \delta \end{smallmatrix} \right) = \left(\begin{smallmatrix} \frac{\alpha + \delta}{2} & \sqrt{\frac{d}{\varepsilon}} \\ \varepsilon \sqrt{\frac{d}{\varepsilon}} & \frac{\alpha+ \delta}{2}\end{smallmatrix} \right)A_i \text{ (modulo }p).
\]
We just chose $A_i$ as a matrix in $4$ indeterminates and search for a
non-zero solution of the system (the  determinant of this system is zero, so
there is always such a solution). If the determinant is not $1$, we
just multiply the matrix via an appropriate matrix, as in the proof of
Lemma~\ref{lemma:conjugationrepresentative}.
\end{itemize}

Lastly, the Chinese reminder theorem gives a matrix in
$\SL_2(\ZZ/N^2m\ZZ)$ satisfying our hypotheses, and we lift it to a
matrix in $\SL_{2}(\ZZ)$.

\section{Examples}
In Table \ref{table:curvesdata} we show some examples of our
method. All the examples were done using Pari/GP \cite{PARI}. The table
notation is as follows: the first column is the elliptic curve label
(in Cremona's notation), the next three columns show which primes
(dividing $N$) of the curve are supercuspidal, Steinberg and ramified
principal series, respectively. The next row gives the chosen $\omega$
(that determines the order in the imaginary quadratic field), and
which primes give rise to Cartan non-split groups (the remaining are
classical ones). It is easy to see that in each example the Cartan-Heegner condition is satisfied.  Then, we list the matrices
$M_{i}:=A_i N_{i} {A_{i}}^{-1}$ for some $\vec{\varepsilon}$. The next
column contains the first Fourier coefficient (where we use the
notation $\zeta_i :=\xi_N^i + \bar{\xi_N}^i$, and a vector
$[a_1, \ldots,a_N]$ means $a_1 \zeta_1 + \dots + a_N \zeta_N$), and
the last column gives the Manin constant $c$ for the optimal quotient.

\begin{table}[h]
\begin{tabular}{||r||c|c|c||c|r||c||c|c||}
  \hline
  EC & Sc & St & Ps& $\omega$ & $C_{ns}$  & $M_i$ & $b_1$& c \\
  \hline
  121b &\mbox{\footnotesize $\{11\}$} & $\emptyset$ & $\emptyset$ & \mbox{\small $\frac{1+\sqrt{-3}}{2}$} & \mbox{\footnotesize$\{11\}$} &$\left(\begin{smallmatrix} 6 & -31\\ 1 & -5 \end{smallmatrix}\right)$ & \mbox{\scriptsize $[-3,-1,-5,-4,2]$} & $\frac{1}{11}$\\[1ex]
  \hline

225a &\mbox{\footnotesize$\{3,5\}$}& $\emptyset$ & $\emptyset$ & \mbox{\small $\frac{1+\sqrt{-91}}{2}$} & \mbox{\footnotesize$\{3\}$}& $\left(\begin{smallmatrix}1 & -23\\ 1& 0\end{smallmatrix} \right)$ & \mbox{\scriptsize $1$} & 1 \\[1ex]
& & & &\mbox{\small $\frac{3+\sqrt{-91}}{10}$}&\mbox{\footnotesize $\{3\}$}&  $\left(\begin{smallmatrix}2 & -5\\ 5 & -1\end{smallmatrix} \right)$ & \mbox{\scriptsize $1$} & \\
\hline
225a & \mbox{\footnotesize$\{3,5\}$} & $\emptyset$ & $\emptyset$ & \mbox{\small $\frac{1+\sqrt{-7}}{2}$} & \mbox{\footnotesize$\{3,5\}$} & $\left(\begin{smallmatrix}8 & -58\\ 1 & -7\end{smallmatrix} \right)$ & \mbox{\footnotesize $\frac{1-\sqrt{5}}{2}$} & 1\\
\hline
289a & $\emptyset$ & \mbox{\footnotesize$\{17\}$} & $\emptyset$&\mbox{\small $\frac{1+\sqrt{-3}}{2}$} & \mbox{\footnotesize$\{17\}$} & $\left(\begin{smallmatrix}9 & -73\\ 1 & 8\end{smallmatrix} \right)$ & \mbox{\scriptsize $[-6,-7,-4,-1,-5,-2,-4,-5]$} & $\frac{1}{17}$\\[1ex]
\hline
1617a & $\emptyset$ & $\emptyset$ & \mbox{\footnotesize$\{7\}$} & \mbox{\footnotesize $\sqrt{-2}$} & \mbox{\footnotesize$\{7\}$}& $\left(\begin{smallmatrix} 14 & -6\\ 33 & -14\end{smallmatrix} \right)$ & \mbox{\scriptsize$[-2,-1,-4]$} & $\frac{1}{7}$\\[1ex]
\hline
49a & \mbox{\footnotesize$\{7\}$}   &$\emptyset$ &$\emptyset$ & \mbox{\footnotesize $\frac{1+\sqrt{-11}}{2}$} & \mbox{\footnotesize$\{7\}$}& $\left(\begin{smallmatrix} 4 & -15\\ 1 & -3\end{smallmatrix} \right)$ & \mbox{\footnotesize $\sqrt{-7}$} & $\frac{1}{7}$\\[1ex]
\hline
\end{tabular}
\caption{Examples of the $q$-expansion and related computational data}
\label{table:curvesdata}
\end{table}

\begin{rem}
  In all the examples of Table~\ref{table:curvesdata} but the last
  one, the optimal quotient coincides with the strong Weil curve. In
  the last example, the optimal quotient corresponds to the curve $49\text{a}2$
  in Cremona's notation. % Moreover the point $P$ obtained in the last example is defined over $K=\QQ(\sqrt{-11})$ and is such that $E(K)/ \langle P \rangle $ is finite.
\end{rem}

In Table~\ref{table:points} we show the points constructed on the
curves of Table~\ref{table:curvesdata} and the multiple of the
generator obtained (up to torsion). Note that in the last case, the
curve has rank $0$ over $\QQ$, and this is why the point is not
rational.

\begin{table}[h]
\begin{tabular}{||r||c|c|c||}
\hline
EC &$K$& P & $m_P$\\
\hline
121b & $\QQ(\sqrt{-3})$ & $(\frac{2411156245}{(37062)^2},-\frac{52866724475375}{(37602)^3})$ & 15\\
\hline
225a& $\QQ(\sqrt{-91})$& $(1,1)$ & 1\\
\hline
225a & $\QQ(\sqrt{-7})$ & $(-1,0)$ & 2\\
\hline 
289a & $\QQ(\sqrt{-3})$ & $(-\frac{15858973521095}{1083383^2}, -\frac{22895413346586388187}{1083383^3})$ & 3\\[1ex]
\hline
1617a & $\QQ(\sqrt{-2})$& $(\frac{3702}{17^2},\frac{184078}{17^3})$ & 3\\
\hline
49a & $\QQ(\sqrt{-11})$& $(\frac{1261982}{11 (127)^2},- \frac{680991}{11 (127)^2} - \frac{327847275}{11^2 (127)^3} \sqrt{-11}  )$ & 3\\
\hline
\end{tabular}
\caption{Heegner points constructed}
\label{table:points}
\end{table}

\appendix
\section{The principal series case computation}
\label{app:1}

The purpose of this short Appendix is to show how the work \cite{Tim} (in particular example $5$)
allows to, given an elliptic curve $E$ with a ramified principal
series at $p$, compute the character to twist by, and the local $p$-th
Fourier coefficient of the forms $h_i$ in
Theorem~\ref{thm:combinationgeneral}. We thank Tim Dokchitser for
explaining us some details of the algorithm. 

\begin{enumerate}
\item Compute $v_p$ = the valuation at $p$ of the discriminant of
  $E$. The order of the character is
  $e:= \frac{12}{\gcd(12:v_p)}$. 
\item Let $L=\QQ(x)/(x^e-p)$. Then, $E$ attains  good reduction at
  the prime ideal $(x)$. Compute the characteristic polynomial
  $\chi_L(t)=t^2- a_pt + p$ of Frobenius at such prime ideal by
  counting the number of points over the finite field (this is
  implemented in SAGE or Magma). The two roots are the $p$-th
  coefficients we are looking for (since there are two forms, conjugate
  to each other), but we need to match each root with its
  corresponding character.
\item Let $g$ be a generator of $\FF_p^\times$, and let
  $L'=\QQ(x)/(x^e-g\cdot p)$. As before, compute the characteristic
  polynomial $\chi_{L'}(t)$ for the prime ideal $(x)$ (the curve is
  again unramified). Then the product of a root of $\chi_L(t)$ multiplied
  by the correct character (evaluated at $g$) must be a root of
  $\chi_{L'}(t)$.
\end{enumerate}

\bibliographystyle{alpha}
\bibliography{biblio}
\end{document}